\newtheorem{theorem}{Theorem}[section]
\newtheorem{definition}[theorem]{Definition}
\newtheorem{lemma}[theorem]{Lemma}
\newtheorem{proposition}[theorem]{Proposition}
\newenvironment{proof}[1][Proof]{\textbf{#1.} }{\hfill\rule{0.5em}{0.5em}}
{\catcode`\@=11\global\let\AddToReset=\@addtoreset
\AddToReset{equation}{section}

\newcommand*{\bigchi}{\mbox{\Large$\chi$}}

\AddToReset{theorem}{section}

\title{Global Lorentz estimates for non-uniformly nonlinear elliptic equations via fractional maximal operators}
\author{Thanh-Nhan Nguyen\thanks{Department of Mathematics, Ho Chi Minh City University of Education, Ho Chi Minh City, Vietnam; \texttt{nhannt@hcmue.edu.vn}}, Minh-Phuong Tran\footnote{Corresponding author.}\thanks{Applied Analysis Research Group, Faculty of Mathematics and Statistics, Ton Duc Thang University, Ho Chi Minh City, Vietnam; \texttt{tranminhphuong@tdtu.edu.vn}}}

\date{\today}

\begin{document}
 
\maketitle
\begin{abstract}
This paper is a contribution to the study of regularity theory for  nonlinear elliptic equations. The aim of this paper is to establish some global estimates for non-uniformly elliptic in divergence form as follows
\begin{align*}
-\mathrm{div}(|\nabla u|^{p-2}\nabla u + a(x)|\nabla u|^{q-2}\nabla u) = - \mathrm{div}(|\mathbf{F}|^{p-2}\mathbf{F} + a(x)|\mathbf{F}|^{q-2}\mathbf{F}),
\end{align*}
that arises from double phase functional problems. In particular, the main results provide the regularity estimates for the distributional solutions in terms of maximal and fractional maximal operators. This work extends that of \cite{CoMin2016,Byun2017Cava} by dealing with the global estimates in Lorentz spaces. This work also extends our recent result in \cite{PNJDE}, which is devoted to the new estimates of divergence elliptic equations using cut-off fractional maximal operators. For future research, the approach developed in this paper allows to attain global estimates of distributional solutions to non-uniformly nonlinear elliptic equations in the framework of other spaces.

\medskip

\medskip

\medskip

\noindent 

\medskip

\noindent Keywords: Regularity estimates; Non-uniform ellipticity; Fractional maximal operators; Lorentz spaces

\end{abstract}   
                  
\section{Introduction and statement of main results}\label{sec:intro}
In mathematical analysis, the calculus of variations concerned with minimizing (or maximizing) energy functionals, where one wishes to find the minimum (or maximum) of a certain class of functions. Solutions to minimization (or maximization) problems in the calculus of variations lead to partial differential equations, in which studying a minimizer of a functional towards to the solution of Euler-Lagrange equation. Therefore, Calculus of Variations, as a door be opened wide for applied mathematics and theoretical physics, has applications in depth coverage a myriad of valuable problems from various fields of sciences: computer science, engineering, economics, biology, etc. In recent years, many researchers were inspired by the subject of the existence local minimizers, regularity properties of minimizers of energies, their extremality properties, and many others in different scenarios. 

In~\cite{Marcellini1989,Marcellini1991,Marcellini1996}, P. Marcellini was interested in the regularity properties of minimizers of a class of integral energy functionals
\begin{align}
\label{eq:classical_energy}
\mathcal{P}(\omega,\Omega) := \int_\Omega{f(x,\nabla\omega(x))dx},
\end{align}
which is defined for $\omega \in W^{1,1}(\Omega)$. Here, domain $\Omega$ is an open bounded domain in $\mathbb{R}^n$, $n \ge 2$, and the integrand $f := f(x,\xi): \Omega \times \mathbb{R}^n \to \mathbb{R}$ is strictly convex function with respect to $\xi$ and satisfies the following \emph{unbalanced growth}
\begin{align}
\label{eq:pq_growth}
C_1 |\xi|^p \le |f(x,\xi)| \le C_2 (1+ |\xi|^q), \quad \forall (x,\xi) \in \Omega \times \mathbb{R},
\end{align}
where $C_1\le C_2$ are positive constants and $1< p \le q$. The unbalanced condition~\eqref{eq:pq_growth} is also called the \emph{$(p,q)$-growth condition} or \emph{non-standard growth condition}, and it remarks that this condition is necessary for the existence of solution to~\eqref{eq:classical_energy} in $W^{1,p}(\Omega)$. The study of problem~\eqref{eq:classical_energy} has its origin in physics, such as nonlinear elasticity, fluidynamics and homogenization (see again~\cite{Marcellini1989,Marcellini1991,Marcellini1996} and~\cite{Fuchs} with further references therein). 

The study of Calder\'on-Zygmund and regularity theory of minimizers to \emph{double phase} variational problems has been receiving increased attention in recent years. For the time being, study of regularity properties of minimisers of \eqref{eq:classical_energy} has attracted much attention from many researchers, see~\cite{Esposito1999,Esposito1999JDE,Esposito2002,Esposito2004,Carozza2011}, to which we refer the interested readers. Furthermore, there have been various significant models with $(p,q)$ growth improved the integrability of minimizers, even for standard case $p=q$, we can indicate here the monographs~\cite{Manfredi1986,Manfredi1988,DM1993,Zhikov1997,CF1999,CRR2018,
CoMin215,BaCoMin2016,CoMin218,BaCoMin2018,Filippis2018}, that have been treated so far.

The concept of \emph{double phase} appeared whenever the integrand $f$ in problem~\eqref{eq:classical_energy} is mixed up two different types of degenerate elliptic phases, according to the positivity of the modulating coefficient $a(\cdot)$. Here, we include a more precise description of the double phase problem, that is the model of minimizing variational integrals
\begin{align}\label{eq:double_phase}
W^{1,1}(w) \ni w \mapsto \mathcal{P}_{p,q}(w,\Omega):= \int_\Omega{\left(|\nabla w|^p + a(x)|\nabla w|^q \right)dx},
\end{align}
together with initial assumptions are required that
\begin{align}\label{eq:apq}
\begin{split}
\begin{cases}
&1<p \le q <n; \\
&0 \le a(\cdot) \in C^{0,\alpha}, \quad 0<\alpha \le 1 ;   
\end{cases}
\end{split}
\end{align}
stems from physical phenomena and very general conditions to build regularity theory for minimizers.  The functional $\mathcal{P}_{p,q}(w,\Omega)$ was first considered by V.V. Zhikov in~\cite{Zhikov1986, Zhikov1995, Zhikov1997} to provide models for strongly anisotropic materials in the context of homogenisation, and afterwards, this functional arises in different contexts from mathematical physics (see for example~\cite{Ball1982,Ruzicka,Benci2000}). Moreover, in model~\eqref{eq:double_phase}, the function $a:\Omega \to [0,\infty)$ in physical sense, dictates the object's geometry of the composite material formed by mixing of two distinct materials together, with power hardening $p$ and $q$, respectively. The presence of $a(\cdot)$ in energy functional $\mathcal{P}_{p,q}$ dictates energy growth, that means on the set $\{x \in \Omega: a(x)=0\}$ the functional $\mathcal{P}_{p,q}$ has $p$-growth in the gradient and otherwise, the growth is of order $q$. It is noteworthy that the function $a(\cdot)$ interacts with the ratio $q/p$ and due to the appearance of \emph{Lavrentiev's phenomenon} arising in mathematical physics, where the minimisers may be discontinuous (see~\cite{Lavrentiev, Zhikov1995} and~\cite{Esposito2004,FMM2004}), it is natural to construct regularity theory for local minimisers under certain assumptions required for $a(\cdot), p, q$ in~\eqref{eq:apq} and 
\begin{align}
\label{eq:pq_strict}
\frac{q}{p} < 1 +\frac{\alpha}{n}.
\end{align}

Furthermore, in the theory of regularity, problem~\eqref{eq:double_phase} leads to solving the associated Euler–Lagrange equation of type
\begin{align}
\label{eq:euler-lagrange}
-\mathrm{div}(p|\nabla u|^{p-2}Du + qa(x)|\nabla u|^{q-2}\nabla u) = 0,
\end{align}
and this is called the non-uniformly elliptic equation, involved the \emph{non-uniformly elliptic operator} 
\begin{align}
\label{eq:A}
\mathcal{A}(x,\xi):= p|\xi|^{p-2}\xi + qa(x)|\xi|^{q-2}\xi,
\end{align}
for $x \in \Omega$ and $\xi \in \mathbb{R}^n$, in which $p, q$ and $a$ satisfy \eqref{eq:apq}. Different from the uniformly elliptic operators, the \emph{ellipticity ratio} of this operator (ratio between the highest and the lowest eigenvalue of $\partial\mathcal{A}(x,\xi)$) might be unbounded (see~\cite{CoMin215,CoMin2016,FM2019JGA} for more detailed explanations and examples). For the case $a \equiv 0$, problem~\eqref{eq:euler-lagrange} becomes the classical $p$-Laplacian equation $-\Delta_p u=0$, from which several regularity results have been obtained in earlier works~\cite{Ural1968,Uhlenbeck1977,Evans1982,DiBenedetto1983,Iwaniec1983}. For the general case,  there have been also numerous monographs concerning regularity results of non-uniformly elliptic operators, such as~\cite{Ural1970,Simon76,Ural1984} and other research studies. 

Let us briefly review some recent important results  on the double phase variational problem~\eqref{eq:double_phase}, as well as some essential problems generated from previous research. We firstly refer to the works by E. Esposito {\it et al.} in~\cite{Esposito1999,Esposito2002,Esposito2004}, where the higher gradient integrability of solutions to variational problems with $(p,q)$-growth was carried. Recently, M. Colombo and G. Mingione in their early papers~\cite{CoMin215,CoMin218} that studied the H\"older estimates for gradient of solutions and the sharp regularity results for a class of integral functionals in~\eqref{eq:double_phase}, respectively. In particular, in~\cite{CoMin215}, the local H\"older gradient continuity of minima was obtained under the main assumption $\frac{q}{p}<1+\frac{\alpha}{n}$.  Otherwise, the higher integrability of minimisers was also proved by the same group of authors in~\cite{CoMin218} under another sharp assumption $q \le p+\alpha$ (dimension free bound). For year, P. Baroni {\it et al.} in~\cite{BaCoMin2015} completed the study of bounded minimisers by the proof of borderline case, where $\frac{q}{p} = 1+\frac{\alpha}{n}$. Since then, the regularity properties for some class of functionals with $(p,q)$ growth conditions (the borderline case was also included) or non-standard growth conditions, were also the research topics of~\cite{BaCoMin2015,BaCoMin2016,BaCoMin2018,Filippis2018}.  Besides, recently there has been growing interest in methods to prove regularity properties for more general classes of non-uniformly elliptic operators, especially in geometric viewpoint. For instance, the regularity theory for a class of non-uniformly variational problems in the case when both minimizers and competitors take values into a manifold was the first model proposed by C. De Filippis {\it et al.} in~\cite{Filippis2018,FM2019JGA,FP2019} and it allows the study of non-uniformly elliptic problems in the context of geometry to be carried.

1.1. \textbf{Some known results and our motivation.} In this paper, we consider the following non-uniformly elliptic equation of the form
\begin{align}\label{eq:diveq}
-\mathrm{div}(p|\nabla u|^{p-2}\nabla u + qa(x)|\nabla u|^{q-2}\nabla u) = - \mathrm{div}(|\mathbf{F}|^{p-2}\mathbf{F} + a(x)|\mathbf{F}|^{q-2}\mathbf{F}),
\end{align}
that occurs as the Euler-Lagrange equation of energy functional
\begin{align}\nonumber 
W^{1,1}(\Omega) \ni w \mapsto \mathcal{P}_{p,q}(w,\Omega) - \int_\Omega{\langle |\mathbf{F}|^{p-2}\mathbf{F} + a(x)|\mathbf{F}|^{q-2}\mathbf{F}\rangle dx},
\end{align}
where $F:\Omega \to \mathbb{R}^n$ is a vector field, the given numbers $p,q$ and coefficient $a(\cdot)$ satisfy assumptions in~\eqref{eq:apq}. Let us roughly and briefly review some known regularity results pertaining to equation~\eqref{eq:diveq} or some non-uniformly elliptic equations of that form. 

From prior theories and significant research by M. Colombo and G. Mingione for variational integrals problems in~\cite{CoMin215,CoMin218}, an article by the same group of authors was first investigated the local version of Calder\'on-Zygmund for the solutions to~\eqref{eq:diveq} in~\cite{CoMin2016}, that resulted
\begin{align}\label{eq:CZloc}
|\mathbf{F}|^p + a(x)|\mathbf{F}|^q \in L^\gamma_{\mathrm{loc}}(\Omega) \Longrightarrow |\nabla u|^p + a(x)|\nabla u|^q \in L^\gamma_{\mathrm{loc}}(\Omega),
\end{align}
holds for every $\gamma \in [1,\infty)$, when $\frac{q}{p}<1+\frac{\alpha}{n}$. Moreover, authors also dealt with results under the imposed assumption $q \le p+\alpha$ that independent on the dimension $n$ and these interesting results were further extended to the vectorial case  in the same paper.

Particularly interesting results came to the attention of other researchers in the past years.  For instance,  S. Byun {\it et al.} in~\cite{Byun2017Cava} provided the extension of such results up to the boundary, that states the global $L^\gamma$ estimates in terms of Calder\'on-Zygmund with $\partial\Omega$ is $C^{1,\alpha^+}$ domain, for $\alpha^+ \in [\alpha,1]$. On the other hand, C. De Filippis {\it et al.} have driven the further development in \cite{FM2019} also claimed the validity of sharp relation in~\eqref{eq:CZloc} in the delicated borderline case $$\frac{q}{p}=1+\frac{\alpha}{n}.$$

In the context of a class of quasilinear elliptic equations involved \emph{uniformly elliptic operator} $\mathcal{A}$, there are numerous different approaches established for regularity results of $-\mathrm{div}\mathcal{A}(x,\nabla u) = \texttt{right hand side}$, where the `\texttt{right hand side}' can be given as the general functional datum $\mathbf{F}$; the divergence form $\mathrm{div}(|\mathbf{F}|^{p-2}\mathbf{F})$; or measure datum $\mu$, under various assumptions on domain, nonlinear operator $\mathcal{A}$ and Dirichlet boundary data (homogeneous or non-homogeneous). For instance, $L^q$ and $W^{1,q}$ estimates to the quasilinear elliptic equations of type $\mathrm{div}\mathcal{A}(x,\nabla u) = \mathrm{div}\mathbf{F}$ were discussed and addressed in the interesting series of papers~\cite{Caffarelli1998, MP11, Phuc2013, BW1, SSB2, SSB3, SSB4, SSB1, Tuoc2018, FT2018} and their related references. Otherwise, regularity of solutions to nonlinear elliptic equations of divergence form $\mathrm{div}\mathcal{A}(x,\nabla u) = \mathrm{div}(|\mathbf{F}|^{p-2}\mathbf{F})$ were also established in Lebesgue, Sobolev's spaces, see~\cite{BCDKS, BW1, CM2014, KZ}; and later extensively treated in more general functional spaces, such as Lorentz, Morrey, Lorentz-Morrey or even Orlicz spaces, etc, \cite{Phuc2015, MPT2018, PNJDE, PNCRM}, to which we refer the interested readers. 

To our knowledge, there has been several approaches devoted to the study of regularity estimates for nonlinear elliptic equations over the years. It is worth for us to mention the method based on the Vitali type covering lemma by S.S. Byun {\it et al.}~\cite{SSB1,SSB3,SSB4}; the method based on the boundedness properties of singular integral potentials by G. Mingione {\it et al.}~\cite{Duzamin2,55DuzaMing,Min2007}; or the method based on the good-$\lambda$ type bounds in~\cite{55QH2,55QH4,MPT2018,PNCCM,PNJDE,PNCRM}. The reader can get a more or less complete picture from the monographs~\cite{Min5,Mi2019} and the references cited therein. It can be seen that the presence of Vitali's covering technical lemma and the properties of reverse H\"older type inequality may be successfully applied in most of these research contributions. On the other hand, our approach in~\cite{PNJDE} using features from \emph{cut-off fractional maximal functions}, was addressed, that becomes a promising  technique to achieve the fractional maximal gradient estimates for solutions to nonlinear elliptic equations. 

Motivated by above interesting results and mathematical techniques developed for quasilinear elliptic equations, our approach in this paper is to establish the global Calder\'on-Zygmund type estimates for solutions to non-uniformly equations \eqref{eq:diveq} in Lorentz spaces, via strong maximal and fractional maximal functions. From ideas seem to come out of previous research findings, our work here is not only an extension of the results in~\cite{CoMin2016,FM2019} for the global regularity in Lorentz spaces, but also an improved result using the technique proposed in our previous work~\cite{PNJDE}, in which the cut-off fractional maximal functions included. Furthermore, the good-$\lambda$ technique plays a role in our proofs to obtain regularity results in the interior and up to the boundary of domain.

1.2. \textbf{Statement of main results.}
Our results in this paper are in fact proved for a more general quasilinear elliptic equation than that of~\eqref{eq:diveq}. To be more precise, we are interested in the following elliptic equations with the homogeneous Dirichlet boundary condition of the type
\begin{align}\tag{P}
\label{eq:main_double}
\begin{cases}
\mathrm{div}(\mathcal{A}(x,\nabla u)) & =  \ \mathrm{div}(\mathcal{B}(x,\mathbf{F})) \quad \text{in} \ \ \Omega, \\
\hspace{1.2cm} u & =  \ 0 \qquad \qquad \qquad \text{on} \ \ \partial \Omega,
\end{cases}
\end{align}
where $\Omega$ is an open bounded domain in $\mathbb{R}^n$ with $n \ge 2$, the datum $\mathbf{F}: \Omega \to \mathbb{R}^n$ is a vector field. The function $a: \Omega \to [0,\infty)$ and parameters $p,q$ satisfy the following main assumptions
\begin{align}\tag{A1}
\label{eq:cond1}
&0 \le a(\cdot) \in C^{0,\alpha}, \quad \alpha \in (0,1] ; \\ 
&1 < p < q \le \left(1+\displaystyle{\frac{\alpha }{n}}\right)p. \tag{A2}
\label{eq:cond2}
\end{align}
The functional operator $\mathcal{A}:\Omega \times \mathbb{R}^n \to \mathbb{R}^n$ defined in~\eqref{eq:A}, is measurable with respect to $x$, differentiable with respect to $y \neq 0$ and satisfies the following conditions 
\begin{align}\tag{A3}
\begin{cases}
|\mathcal{A}(x,y)| + |\partial \mathcal{A}(x,y)||y| \le L \left(|y|^{p-1}+a(x)|y|^{q-1} \right);\\
\nu \left(|y|^{p-2}+a(x)|y|^{q-2} \right)|z|^2 \le  \langle \partial \mathcal{A}(x,y)z,z \rangle; \\
\left|\mathcal{A}(x_1,y) - \mathcal{A}(x_2,y) \right| \le L |a(x_1) - a(x_2)||y|^{q-1},
\end{cases}
\label{eq:cond3}
\end{align}
whenever $y, z \in \mathbb{R}^n \setminus \{0\}$; $x,x_1,x_2 \in \Omega$ and $0<\nu \le L < +\infty$ are fixed constants. One notices that $\partial$~denotes the partial differentiation with respect to the gradient variable $y$. In the view of condition~\eqref{eq:cond3}$_2$, for $1<p<q$, it implies an additive monotonicity property of $\mathcal{A}$ as follows
\begin{align}
\label{eq:dk8}
\tilde{\nu}\left[(|y_1|^2+|y_2|^2)^{\frac{p-2}{2}} + a(x)(|y_1|^2+|y_2|^2)^{\frac{q-2}{2}} \right]|y_1-y_2|^2 \le \langle \mathcal{A}(x,y_1)-\mathcal{A}(x,y_2),y_1-y_2 \rangle,
\end{align}
where $\tilde{\nu}$ is another positive constant depending only on $n,p,q,\nu$. And for the particular case $2 \le p<q$, we can do to reduce it as
\begin{align}\label{eq:dk9}
\tilde{\nu} \left(|y_1-y_2|^p + a(x)|y_2-y_2|^q \right) \le \langle \mathcal{A}(x,y_1) - \mathcal{A}(x,y_2),y_1-y_2 \rangle.
\end{align}

In addition, the vector field $\mathcal{B}$ is a Carath\'eodory vector valued function (that is, $\mathcal{B}(.,y)$ is measurable on $\Omega$ for every $y$ in $\mathbb{R}^n$, and $\mathcal{B}(x,.)$ is continuous on $\mathbb{R}^n$ for almost every $x$ in $\Omega$) which satisfies the following growth condition
\begin{align}\label{eq:B-cond}
|\mathcal{B}(x,y)| \le L (|y|^{p-1} +a(x)|y|^{q-1}),
\end{align}
for every $x \in \Omega$ and $y \in \mathbb{R}^n$. It can be seen that the problem~\eqref{eq:main_double} under some assumptions~\eqref{eq:cond1}, \eqref{eq:cond2}, \eqref{eq:cond3} contains the model~\eqref{eq:diveq} as a special case. 

Before formulating our main results, let us introduce some important and relevant terminology. In the remainder of this paper, we shall denote by $\mathcal{H}$ the operator 
\begin{align}\nonumber 
\mathcal{H}(x,y) = |y|^p + a(x)|y|^q,
\end{align}
for every $x \in \Omega$ and $y \in \mathbb{R}^n$.  Under various assumptions in~\eqref{eq:cond1}, \eqref{eq:cond2} and~\eqref{eq:cond3}, there exists a certain set of parameters that will affect the constant dependence in our statements below and for the convenience of the reader, let us set the notation 
\begin{align*}
\texttt{data} \equiv \texttt{data}(n,p,q,\alpha,\nu,L,\|a\|_{L^\infty},[a]_\alpha,\|\mathcal{H}(\cdot,\nabla u)\|_{L^1}),
\end{align*}
to simplify the dependence on known data of the problem.

The purpose of this paper is twofold. On the one hand, it firstly attempts to extend results mentioned in~\cite{CoMin2016,Byun2017Cava} to classical Lorentz spaces, via maximal functional operators. Secondly, it aims at deriving the global fractional maximal estimate of solutions, more general types of regularity results in Lorentz spaces. Our main results are stated in the following theorems. The first result in Theorem~\ref{theo:regularityM} is the global Calder\'on-Zygmund type estimates for problem~\eqref{eq:main_double} in Lorentz spaces, via the classical maximal operators. The next result of this paper deals with a more general than that of Theorem~\ref{theo:regularityM} to fractional maximal operators, will be clarified in Theorem~\ref{theo:main-M-beta} below. This result is essentially just a re-statement of Theorem~\ref{theo:M_lambda} in more extensive version. Our proofs, presented in the Section~\ref{sec:proofs}, can be established using the corresponding good-$\lambda$ technique also stated and justified therein.
\begin{theorem}\label{theo:regularityM}
Let $\Omega$ be an open bounded domain in $\mathbb{R}^n$ such that $\partial \Omega$ is $C^{1,\alpha^+}$ domain for some $\alpha^+ \in [\alpha,1]$. Assume that $u \in W^{1,1}(\Omega)$ is a distributional solution to~\eqref{eq:main_double} with 
\begin{align*}
\mathcal{H}(x,\nabla u); \ \mathcal{H}(x,\mathbf{F}) \in L^1(\Omega),
\end{align*}
under main assumptions given in~\eqref{eq:cond1}, \eqref{eq:cond2} and~\eqref{eq:cond3}. 
Then for every $0<s<\infty$ and $0<t \le \infty$, there holds 
$$\mathbf{M}(\mathcal{H}(x,\mathbf{F})) \in L^{s,t}(\Omega) \Longrightarrow \mathbf{M}(\mathcal{H}(x,\nabla u)) \in L^{s,t}(\Omega)$$ 
with the following corresponding estimate 
\begin{align}\label{eq:regularityM0}
\|\mathbf{M}(\mathcal{H}(x,\nabla u))\|_{L^{s,t}(\Omega)}\leq C \|\mathbf{M}(\mathcal{H}(x,\mathbf{F}))\|_{L^{s,t}(\Omega)}.
\end{align}
Here, $C$ is the positive constant depending only on $\texttt{data},\Omega,s,t$.
\end{theorem}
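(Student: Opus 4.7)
Proof proposal. The plan is to establish a good-$\lambda$ inequality comparing the super-level sets of $\mathbf{M}(\mathcal{H}(\cdot,\nabla u))$ and $\mathbf{M}(\mathcal{H}(\cdot,\mathbf{F}))$, and then to convert this distributional bound into the Lorentz norm estimate \eqref{eq:regularityM0} via the layer-cake representation. Concretely, I aim to prove that there exist constants $A>1$ and $\delta>0$ such that for every $\lambda>0$ and every sufficiently small $\varepsilon\in(0,1)$,
\begin{align*}
\bigl|\{x\in\Omega : \mathbf{M}(\mathcal{H}(\cdot,\nabla u))(x)>A\lambda,\ \mathbf{M}(\mathcal{H}(\cdot,\mathbf{F}))(x)\le\varepsilon\lambda\}\bigr| \le C\varepsilon^\delta\bigl|\{x\in\Omega : \mathbf{M}(\mathcal{H}(\cdot,\nabla u))(x)>\lambda\}\bigr|.
\end{align*}
Multiplying by $\lambda^{t-1}$, integrating in $\lambda$ against the Lorentz quasi-norm, and absorbing the $\mathbf{M}(\mathcal{H}(\cdot,\nabla u))$ term on the left after choosing $\varepsilon$ small depending on $A$, $s$, $t$, yields \eqref{eq:regularityM0}; the case $t=\infty$ follows by replacing integration by supremum and using $\|f\|_{L^{s,\infty}}=\sup_{\lambda>0}\lambda\,|\{f>\lambda\}|^{1/s}$. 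This good-$\lambda$-to-Lorentz passage is by now standard, see \cite{MPT2018,PNJDE,PNCRM}, and presumably corresponds to the more technical Theorem~\ref{theo:M_lambda} referenced in the introduction.

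The good-$\lambda$ inequality itself is proved through local comparison combined with a Vitali/Calder\'on--Zygmund covering of the bad level set. At each comparison ball $B_r(x_0)$, either entirely inside $\Omega$ or intersecting $\partial\Omega$ (in which case the $C^{1,\alpha^+}$ regularity is invoked to flatten the boundary), solve the homogeneous Dirichlet problem
\begin{align*}
\begin{cases}
\mathrm{div}(\mathcal{A}(x,\nabla v))=0 &\text{in } B_r(x_0)\cap\Omega, \\
v=u &\text{on } \partial(B_r(x_0)\cap\Omega).
\end{cases}
\end{align*}
Testing with $u-v$ and using the monotonicity inequalities \eqref{eq:dk8}--\eqref{eq:dk9} together with the growth bound \eqref{eq:B-cond} gives the energy comparison
\begin{align*}
\int_{B_r(x_0)\cap\Omega}\mathcal{H}(x,\nabla u-\nabla v)\,dx \le C\int_{B_r(x_0)\cap\Omega}\mathcal{H}(x,\mathbf{F})\,dx.
\end{align*}
On the other hand, the reference solution $v$ enjoys a Lipschitz-type sup--mean estimate for $\mathcal{H}(\cdot,\nabla v)$, thanks to the interior regularity of Colombo--Mingione \cite{CoMin2016} and its boundary extension by Byun et al.\ \cite{Byun2017Cava}, both valid precisely under \eqref{eq:cond2}. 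Combining these ingredients with Chebyshev's inequality and the hypothesis that $\mathbf{M}(\mathcal{H}(\cdot,\mathbf{F}))\le\varepsilon\lambda$ somewhere in the ball yields the desired level-set inequality; the exponent $\delta$ emerges from the reverse-H\"older higher integrability of $\mathcal{H}(\cdot,\nabla v)$.

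The main obstacle is controlling the non-homogeneous quantity $\mathcal{H}(x,y)=|y|^p+a(x)|y|^q$ simultaneously across its $p$-phase and its $q$-phase, which rules out the scalar reduction available for the standard $p$-Laplacian. On each comparison ball the H\"older oscillation of $a(\cdot)$ produces an error of order $r^\alpha$ that must be reabsorbed using the sharp balance $q\le(1+\alpha/n)p$, exactly as in \cite{CoMin2016,Byun2017Cava}; this is the core technical step. The boundary analysis demands the same care after flattening, since $v$ must retain its sup--mean bound up to the flattened boundary, which is the reason for assuming $\partial\Omega\in C^{1,\alpha^+}$ with $\alpha^+\ge\alpha$. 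Once this comparison machinery is in place, the remainder of the argument---namely the covering, stopping-time, and layer-cake steps---is routine, and the Lorentz bound for the full range $0<s<\infty$, $0<t\le\infty$ comes out automatically.
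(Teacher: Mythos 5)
Your overall strategy coincides with the paper's: a good-$\lambda$ inequality comparing the level sets of $\mathbf{M}(\mathcal{H}(\cdot,\nabla u))$ and $\mathbf{M}(\mathcal{H}(\cdot,\mathbf{F}))$, proved by interior/boundary comparison with the homogeneous reference problem together with a covering lemma (Lemma~\ref{lem:mainlem} here), followed by the standard layer-cake conversion to Lorentz quasi-norms. Two steps, however, would not go through as you wrote them.

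First, you state the good-$\lambda$ inequality with a \emph{fixed} dilation constant $A$ and claim that the factor $\varepsilon^{\delta}$ ``emerges from the reverse-H\"older higher integrability of $\mathcal{H}(\cdot,\nabla v)$''. These two choices are incompatible. With a fixed $A$ and only a reverse H\"older estimate with exponent $\gamma$ (which is all that Lemmas~\ref{lem:Rev} and~\ref{lem:Rev-boundary} provide, in particular near $\partial\Omega$), the reference-solution term contributes $C(A\lambda)^{-\gamma}\int_B[\mathcal{H}(x,\nabla v)]^{\gamma}dx\le C(C/A)^{\gamma}|B|$, a fixed constant independent of $\varepsilon$ --- no smallness. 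The fixed-$A$ formulation only works if $\mathcal{H}(\cdot,\nabla v)$ enjoys a genuine sup--mean ($L^\infty$) bound up to the boundary, which is not what the cited results supply for this class of operators. The paper's fix is to let the upper threshold be $\varepsilon^{-\vartheta}\lambda$ (Theorem~\ref{theo:M_lambda}): the weak $L^{1/\vartheta}$ bound of $\mathbf{M}$ applied to the reverse H\"older estimate with $\gamma=1/\vartheta$ then produces exactly the factor $\varepsilon$. You should adopt this $\varepsilon$-dependent threshold, and correspondingly take $\vartheta<1/s$ so that the absorption $C\varepsilon^{t(1/s-\vartheta)}\le\tfrac12$ in the layer-cake step is possible.

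Second, your energy comparison $\int\mathcal{H}(x,\nabla u-\nabla v)\,dx\le C\int\mathcal{H}(x,\mathbf{F})\,dx$ is only valid for $p\ge2$. In the singular range $1<p<2$ the monotonicity \eqref{eq:dk8} does not control $|\nabla u-\nabla v|^p$ directly, and one must settle for the weaker form $\fint\mathcal{H}(x,\nabla u-\nabla v)\,dx\le\varepsilon\fint\mathcal{H}(x,\nabla u)\,dx+C\varepsilon^{-\kappa}\fint\mathcal{H}(x,\mathbf{F})\,dx$ of Lemma~\ref{lem:comp_est_in}; the extra term is then handled by the stopping-time information $\mathbf{M}(\mathcal{H}(\cdot,\nabla u))(\xi_2)\le\lambda$ at a point of the ball not in the bad set. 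With these two repairs your outline matches the paper's proof.
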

\begin{theorem}\label{theo:main-M-beta}
Let  $\beta \in [0, n)$ and the equation~\eqref{eq:main_double} is set under conditions in~\eqref{eq:cond1}, \eqref{eq:cond2} and~\eqref{eq:cond3}, where $\Omega$ is an open bounded domain in $\mathbb{R}^n$ such that $\partial \Omega \in C^{1,\alpha^+}$ for some $\alpha^+ \in [\alpha,1]$. Assume that $u \in W^{1,1}(\Omega)$ is a distribution solution to~\eqref{eq:main_double} with given data $\mathbf{F}$ satisfying 
$$\mathcal{H}(x,\nabla u), \ \mathcal{H}(x,\mathbf{F}) \in L^1(\Omega).$$ 
Then, for every $s \in (0,\infty)$ and $0 < t \le \infty$,  there exists a constant~$C = C(\texttt{data},\Omega,s,t,\beta)>0$ such that 
\begin{align}\label{eq:main-M-beta}
\|\mathbf{M}_{\beta}(\mathcal{H}(x,\nabla u))\|_{L^{s,t}(\Omega)}\leq C \|\mathbf{M}_{\beta}(\mathcal{H}(x,\mathbf{F}))\|_{L^{s,t}(\Omega)}.
\end{align}
\end{theorem}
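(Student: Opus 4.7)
The plan is to derive \eqref{eq:main-M-beta} by converting a level-set (good-$\lambda$) inequality, which I would read as the content of the earlier Theorem~\ref{theo:M_lambda}, into the Lorentz-norm bound. More precisely, I would assume the good-$\lambda$ inequality in the form: there exist constants $\Lambda = \Lambda(\texttt{data},\beta) > 1$ and, for every $\varepsilon \in (0,1)$, a threshold $\delta = \delta(\varepsilon,\texttt{data},\beta) \in (0,1)$ such that for all $\lambda > 0$,
\begin{equation*}
\bigl|\{x \in \Omega : \mathbf{M}_\beta(\mathcal{H}(\cdot,\nabla u))(x) > \Lambda\lambda,\ \mathbf{M}_\beta(\mathcal{H}(\cdot,\mathbf{F}))(x) \le \delta\lambda\}\bigr| \le \varepsilon\,\bigl|\{x \in \Omega : \mathbf{M}_\beta(\mathcal{H}(\cdot,\nabla u))(x) > \lambda\}\bigr|,
\end{equation*}
which in turn splits the level set of the gradient-side fractional maximal into a small multiple of itself plus a level set of the data-side fractional maximal.

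The second step is to insert this inequality into the distribution-function representation of the Lorentz quasi-norm. Recall that for $0 < s < \infty$ and $0 < t < \infty$,
\begin{equation*}
\|g\|_{L^{s,t}(\Omega)}^{t} \;=\; s \int_{0}^{\infty} \lambda^{t-1} \bigl|\{x \in \Omega : |g(x)| > \lambda\}\bigr|^{t/s}\, d\lambda,
\end{equation*}
with the usual supremum formula when $t = \infty$. Writing this with $g = \mathbf{M}_\beta(\mathcal{H}(\cdot,\nabla u))$, changing variables $\lambda \mapsto \Lambda\lambda$, and applying the level-set splitting above yields an inequality of the schematic form
\begin{equation*}
\|\mathbf{M}_\beta(\mathcal{H}(\cdot,\nabla u))\|_{L^{s,t}(\Omega)}^{t} \;\le\; C\Lambda^{t}\,\varepsilon^{t/s}\,\|\mathbf{M}_\beta(\mathcal{H}(\cdot,\nabla u))\|_{L^{s,t}(\Omega)}^{t} \;+\; C\Lambda^{t}\delta^{-t}\,\|\mathbf{M}_\beta(\mathcal{H}(\cdot,\mathbf{F}))\|_{L^{s,t}(\Omega)}^{t}.
\end{equation*}
Choosing $\varepsilon$ small so that $C\Lambda^{t}\varepsilon^{t/s} \le 1/2$ (with the obvious modification for $t=\infty$) lets me absorb the first term on the right, producing \eqref{eq:main-M-beta} with $C = C(\texttt{data},\Omega,s,t,\beta)$.

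The main obstacle I anticipate is the absorption step, which is only legitimate if $\|\mathbf{M}_\beta(\mathcal{H}(\cdot,\nabla u))\|_{L^{s,t}(\Omega)}$ is known a priori to be finite. Because the hypothesis only gives $\mathcal{H}(\cdot,\nabla u) \in L^{1}(\Omega)$, I would handle this by a truncation/approximation procedure: apply the argument to the truncated quantity $\min\{\mathbf{M}_\beta(\mathcal{H}(\cdot,\nabla u)),k\}$ for $k \in \mathbb{N}$, whose Lorentz norm is automatically finite since $\Omega$ is bounded, prove a bound uniform in $k$, and then send $k \to \infty$ via monotone convergence. A subsidiary technical point is that the boundary regularity of $\partial\Omega \in C^{1,\alpha^{+}}$ has to be used at the level of the good-$\lambda$ inequality (so that interior and boundary comparison estimates for the non-uniformly elliptic operator $\mathcal{A}$ are available on balls intersecting $\partial\Omega$); I would invoke this through Theorem~\ref{theo:M_lambda} rather than reprove it here. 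Finally, the case $t=\infty$ is handled by taking a supremum in place of the integral above, in which the same absorption trick still applies after the dyadic change of variables in $\lambda$.
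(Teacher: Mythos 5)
Your overall strategy---feed a good-$\lambda$ level-set inequality into the distribution-function form of the Lorentz quasinorm, absorb the small term, and legitimise the absorption by truncation---is exactly the paper's route, and the second half of your argument (change of variables, absorption, truncation in $k$ with monotone convergence, the $t=\infty$ modification) is fine; the a priori finiteness issue is in fact handled more carefully by you than by the paper. The gap is in the input you propose to invoke. Theorem~\ref{theo:M_lambda} is only the case $\beta=0$; the relevant statement is Theorem~\ref{theo:M_lambda-beta}, and that theorem is \emph{not} a good-$\lambda$ inequality for $\mathbf{M}_\beta(\mathcal{H}(\cdot,\nabla u))$ itself but for the composition $\mathbf{M}\mathbf{M}_\beta(\mathcal{H}(\cdot,\nabla u))$. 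Your assumed inequality does not follow from it: replacing $\mathbf{M}\mathbf{M}_\beta$ by $\mathbf{M}_\beta$ shrinks the exceptional set on the left (harmless) but also shrinks the superlevel set on the right, since $\{\mathbf{M}_\beta(\mathcal{H}(\cdot,\nabla u))>\lambda\}\subset\{\mathbf{M}\mathbf{M}_\beta(\mathcal{H}(\cdot,\nabla u))>\lambda\}$, so the implication goes the wrong way. The extra $\mathbf{M}$ is not decorative: it is what makes the covering argument (Lemma~\ref{lem:mainlem} combined with the splitting Lemma~\ref{lem:MrMr}) run, and it costs nothing at the end because $\mathbf{M}_\beta f\le\mathbf{M}\mathbf{M}_\beta f$ a.e., so the Lorentz bound for $\mathbf{M}\mathbf{M}_\beta(\mathcal{H}(\cdot,\nabla u))$ against $\mathbf{M}_\beta(\mathcal{H}(\cdot,\mathbf{F}))$ immediately gives \eqref{eq:main-M-beta}. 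You should run your computation with $g=\mathbf{M}\mathbf{M}_\beta(\mathcal{H}(\cdot,\nabla u))$ and pass to $\mathbf{M}_\beta$ only at the last line (or else prove the direct $\mathbf{M}_\beta$ version from scratch, which is not done anywhere in the paper).

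The second problem is the shape of the inequality you assume: a dilation constant $\Lambda=\Lambda(\texttt{data},\beta)$ independent of $\varepsilon$. What is actually provable here is the level $\varepsilon^{-\vartheta}\lambda$, i.e.\ $\Lambda$ blowing up as $\varepsilon\to0$. This is forced by the method: the reference solution $w$ only satisfies a reverse H\"older inequality with a fixed exponent (Lemma~\ref{lem:Rev}), so the measure of $\{\mathbf{M}_\beta(\chi\,\mathcal{H}(\cdot,\nabla w))>\Lambda\lambda\}\cap B_\varrho$ is only of order $\Lambda^{-1/\vartheta}\varrho^n$, which cannot be made $\le\varepsilon\varrho^n$ with $\Lambda$ fixed. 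With $\Lambda=\varepsilon^{-\vartheta}$ your absorption coefficient becomes $C\varepsilon^{t(1/s-\vartheta)}$ rather than $C\Lambda^t\varepsilon^{t/s}$, and it tends to zero only if $\vartheta<1/s$; combined with the requirement $\vartheta<1-\beta/n$ (needed so that the reverse H\"older exponent $1/(\vartheta+\beta/n)$ exceeds $1$), this is precisely the choice $\vartheta\in\left(0,\min\{1-\frac{\beta}{n},\frac{1}{s}\}\right)$ made in the paper and missing from your write-up. Without these two parameter constraints the absorption step fails for small $s$.
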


The paper is organized as follows. In the next section \ref{sec:pre} we point out some notation and definitions that will be used throughout the paper. Section \ref{sec:comparison_sec} is devoted to state and prove comparison results via some important lemmas of local estimates and their boundary versions. The last section leads to the proofs of main theorems, which in particular gives two separable regularity results of maximal and fractional maximal estimates of solutions in Lorentz spaces. 

\section{Notation and Preliminaries}
\label{sec:pre}
Let us first introduce in this section some notations and initial definitions and properties that will be used in the rest of the paper.

Throughout the study, the domain $\Omega \subset \mathbb{R}^n$, $n \ge 2$ is assumed to be an open bounded domain and the denotation $B_R(x)$ stands for an open ball in $\mathbb{R}^n$ with radius $R>0$ and center $x \in \mathbb{R}^n$; that is the set $\{y \in \mathbb{R}^n: |y-x|<R\}$. And in what follows, let us denote $\displaystyle{\fint_{B}{f(x)dx}}$, the average value of a function $f \in L^1_{\mathrm{loc}}(\mathbb{R}^n)$ over an open bounded set $B \subset \mathbb{R}^n$ as
\begin{align*}
\fint_B{f(x)dx} = \frac{1}{\mathcal{L}^n(B)}\int_B{f(x)dx}.
\end{align*}
Here the notation $\mathcal{L}^n(E)$ stands for the $n$-dimensional Lebesgue measure of a measurable set $E \subset \mathbb{R}^n$. In addition, the denotation $\mathrm{diam}(S)$ is the diameter of a set $S \subset \Omega$ defined as:
\begin{align*}
\mathrm{diam}(S) = \sup \{d(x,y) \  : \ x,y \in S\}.
\end{align*}
Hereafter, for the sake of convenience, with some abuse of notation, the set $\{x \in \Omega: |g(x)| > \Lambda\}$ is still denoted by $\{|g|>\Lambda\}$. Otherwise, for the function $a$ mentioned in \eqref{eq:cond1}, we denote
\begin{align*}
[a]_{\alpha;S} := \sup_{x,y \in S; \, x \neq y}{\frac{|a(x)-a(y)|}{|x-y|^\alpha}},
\end{align*}
for any nonempty set $S \in \Omega$. For simplicity's sake, in the present paper we term $[a]_\alpha$ for $[a]_{\alpha;\Omega}$ and $\|a\|_{L^\infty}$ for $\|a\|_{L^\infty(\Omega)}$.

Further, in order not to confuse the issue we shall denote by $C$ a universal constant depending only on some prescribed parameters, that may be different from line to line. The dependencies of constant $C$ will be emphasized between parentheses (\texttt{data} may be included), which greater than or equal to 1. 

\begin{definition}[Distributional solution]\label{def:weak_sol}
A function $u \in W^{1,1}_0(\Omega)$ is a distributional solution to \eqref{eq:main_double} under assumptions~\eqref{eq:cond1}, \eqref{eq:cond2} and~\eqref{eq:cond3} if it satisfies
\begin{align*}
\int_\Omega{\langle \mathcal{A}(x,\nabla u),D\varphi \rangle dx} = \int_\Omega{\langle \mathcal{B}(x,\mathbf{F}),D\varphi \rangle dx},
\end{align*}
for every $\varphi \in C_0^\infty(\Omega)$.
\end{definition}
In~\cite[Proposition 3.5]{Byun2017Cava}, an important result concerned to distributional solution of~\eqref{eq:main_double} was addressed. For the reader's convenience, we re-state the result in the following lemma.
\begin{lemma}\label{lem:solu}
Let $u \in W^{1,1}(\Omega)$ be a distribution solution to~\eqref{eq:main_double} under conditions in~\eqref{eq:cond1}, \eqref{eq:cond2} and~\eqref{eq:cond3} with given data $\mathbf{F}$ satisfying $\mathcal{H}(x,\nabla u), \ \mathcal{H}(x,\mathbf{F}) \in L^1(\Omega)$. Then the following variational formula 
\begin{align}\label{eq:solu}
\int_\Omega{\langle \mathcal{A}(x,\nabla u),\nabla \varphi \rangle dx} = \int_\Omega{\langle \mathcal{B}(x,\mathbf{F}), \nabla \varphi \rangle dx}
\end{align}
holds for every test function $\varphi \in W_0^{1,1}(\Omega)$ such that $\mathcal{H}(x,\nabla \varphi) \in L^1(\Omega)$.
\end{lemma}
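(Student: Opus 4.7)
The strategy is a density/approximation argument to promote the identity from the definitional class $C_0^\infty(\Omega)$ to the target class $\mathcal{V} := \{\varphi \in W_0^{1,1}(\Omega) : \mathcal{H}(\cdot,\nabla\varphi) \in L^1(\Omega)\}$. The ingredients I would use are just Definition~\ref{def:weak_sol}, the two growth bounds~\eqref{eq:cond3}$_1$ and~\eqref{eq:B-cond}, and a smooth approximation in the Musielak-Orlicz-Sobolev space associated with $\mathcal{H}$.

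First, I would verify that for every $\varphi \in \mathcal{V}$ both integrals in~\eqref{eq:solu} are absolutely convergent. Using~\eqref{eq:cond3}$_1$ together with Young's inequality applied with conjugate exponents $(p,p')$ and $(q,q')$ (splitting $a(x) = a(x)^{(q-1)/q} \cdot a(x)^{1/q}$ in the $q$-term), one obtains
\begin{align*}
\int_\Omega |\langle \mathcal{A}(x,\nabla u),\nabla\varphi\rangle|\, dx \;\le\; C \bigl(\|\mathcal{H}(\cdot,\nabla u)\|_{L^1(\Omega)} + \|\mathcal{H}(\cdot,\nabla\varphi)\|_{L^1(\Omega)}\bigr),
\end{align*}
and the analogous estimate for the $\mathcal{B}(x,\mathbf{F})$-term is obtained from~\eqref{eq:B-cond} in the same way. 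Both sides of~\eqref{eq:solu} therefore make sense on $\mathcal{V}$.

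The heart of the argument is to approximate an arbitrary $\varphi \in \mathcal{V}$ by a sequence $\varphi_k \in C_0^\infty(\Omega)$ converging \emph{modularly}, namely $\|\mathcal{H}(\cdot, \nabla\varphi_k - \nabla\varphi)\|_{L^1(\Omega)} \to 0$. This is precisely the density of $C_0^\infty(\Omega)$ in the Musielak-Orlicz-Sobolev space $W^{1,\mathcal{H}}_0(\Omega)$ naturally attached to $\mathcal{H}(x,y) = |y|^p + a(x)|y|^q$. This is the step I expect to be the main obstacle, since a generic double-phase energy exhibits the Lavrentiev gap and smooth functions are \emph{not} dense in the natural energy class. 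The saving assumption here is~\eqref{eq:cond2}, $q \le (1 + \alpha/n)p$, combined with $a \in C^{0,\alpha}$: under this sharp dimension-dependent bound the Lavrentiev phenomenon is absent (Zhikov~\cite{Zhikov1995,Zhikov1997}; Colombo-Mingione~\cite{CoMin215,CoMin218}; Baroni-Colombo-Mingione~\cite{BaCoMin2015}), and the required smooth approximation can be produced by truncation toward $\partial\Omega$ followed by a mollification carefully calibrated to the H\"older modulus of $a(\cdot)$.

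Finally, I would pass to the limit in the identity written at $\varphi_k$. For every $\varepsilon > 0$, Young's inequality as above yields
\begin{align*}
\left|\int_\Omega \langle \mathcal{A}(x,\nabla u),\nabla\varphi_k - \nabla\varphi\rangle\, dx\right| \;\le\; \varepsilon\,\|\mathcal{H}(\cdot,\nabla u)\|_{L^1(\Omega)} + C_\varepsilon\, \|\mathcal{H}(\cdot, \nabla\varphi_k - \nabla\varphi)\|_{L^1(\Omega)},
\end{align*}
and an entirely analogous bound for the $\mathcal{B}$-term. Letting $k \to \infty$ kills the second summand by the modular convergence above, and then $\varepsilon \to 0$ gives the convergence of both sides of~\eqref{eq:solu}. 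Since the identity holds with $\varphi_k \in C_0^\infty(\Omega)$ by Definition~\ref{def:weak_sol}, it transfers in the limit to $\varphi$, as required.
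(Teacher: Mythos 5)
The paper offers no proof of this lemma: it is quoted directly from \cite[Proposition 3.5]{Byun2017Cava}, so there is no internal argument to measure yours against. Your plan is nonetheless the natural (and, as far as I can tell, the same) route taken in that reference. The two bookkeeping steps are correct as written: the growth bounds \eqref{eq:cond3}$_1$ and \eqref{eq:B-cond} together with the weighted Young inequality (splitting $a=a^{(q-1)/q}\cdot a^{1/q}$) show that both integrals converge absolutely on the class $\{\varphi\in W_0^{1,1}(\Omega):\mathcal{H}(\cdot,\nabla\varphi)\in L^1(\Omega)\}$, and the $\varepsilon$--$C_\varepsilon$ version of the same inequality turns the modular convergence $\|\mathcal{H}(\cdot,\nabla\varphi_k-\nabla\varphi)\|_{L^1(\Omega)}\to 0$ into convergence of both sides of \eqref{eq:solu}; this is exactly the manipulation the paper itself performs in Lemma~\ref{lem:HY} and Proposition~\ref{prop1}.

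The caveat is that the entire mathematical content of the lemma sits in the one step you dispatch in a sentence: the modular density of $C_0^\infty(\Omega)$ in that class. The interior half is indeed the classical no-Lavrentiev computation: for mollifications one bounds $a(x)\bigl(\fint_{B_\epsilon(x)}|\nabla\varphi|\,dy\bigr)^q$ by $\bigl(\inf_{B_\epsilon(x)}a+[a]_\alpha\epsilon^\alpha\bigr)\bigl(\fint_{B_\epsilon(x)}|\nabla\varphi|\,dy\bigr)^{q-p}\bigl(\fint_{B_\epsilon(x)}|\nabla\varphi|\,dy\bigr)^{p}$ and then uses $\epsilon^\alpha\bigl(\fint_{B_\epsilon(x)}|\nabla\varphi|\,dy\bigr)^{q-p}\le C\,\epsilon^{\alpha-n(q-p)/p}\|\nabla\varphi\|_{L^p}^{q-p}$, which stays bounded precisely because of \eqref{eq:cond2}. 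But the ``truncation toward $\partial\Omega$'' is not supplied by the references you invoke, which address the local statement: multiplying $\varphi$ by a cutoff $\eta_\delta$ produces the term $\varphi\,\nabla\eta_\delta\sim\varphi/\delta$ on a $\delta$-collar of the boundary, and controlling $\int_\Omega a(x)|\varphi\,\nabla\eta_\delta|^q\,dx$ there requires a Hardy-type boundary-layer estimate and some regularity of $\partial\Omega$ (which the surrounding theorems assume to be $C^{1,\alpha^+}$ but the lemma statement omits). Either supply that boundary argument in detail or do what the paper does and cite \cite[Proposition 3.5]{Byun2017Cava}; as written, the decisive step is asserted rather than proved.
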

We also recall here the definition of Lorentz spaces, are the ones that more general than Lebesgue spaces with two-parameter scale. The definition of Lorentz spaces $L^{s,t}(\Omega)$ can be found in~\cite{55Gra,Maly}.

\begin{definition}[Lorentz spaces]\label{def:Lorentz}
Lorentz space $L^{s,t}(\Omega)$ for $0<s<\infty$ and $
0 <t \le \infty$ is defined by that for all Lebesgue measurable function $f$ on $\Omega$, there holds
\begin{align*}
\|f\|_{L^{s,t}(\Omega)} := \left[ s \int_0^\infty{ \lambda^t\mathcal{L}^n \left( \{y \in \Omega: |f(y)|>\lambda\} \right)^{\frac{t}{s}} \frac{d\lambda}{\lambda}} \right]^{\frac{1}{t}} < \infty,
\end{align*}
if $s<\infty$ and otherwise
\begin{align*}
\|f\|_{L^{s,\infty}(\Omega)} := \sup_{\lambda>0}{\lambda \mathcal{L}^n\left(\{y \in \Omega:|f(y)|>\lambda\}\right)^{\frac{1}{s}}},
\end{align*}
this is defined as Marcinkiewicz spaces.
\end{definition}
It is worthy to remark that when $s=t$ the Lorentz space $L^{s,s}(\Omega)$ is nothing but a Lebesgue space $L^s(\Omega)$. In particular, it is well known that for some $0<r \le s \le t \le \infty$, there holds
$$L^t(\Omega)  \subset L^{s,r}(\Omega) \subset  L^{s}(\Omega) \subset L^{s,t}(\Omega) \subset L^r(\Omega).$$

Now, we turn our attention to the definition of fractional maximal function in the spirit of~\cite{K1997, KS2003}, that is a very important tool for the proofs of our results in the sequel. 

\begin{definition}[Fractional maximal function]\label{def:Malpha}
Let $0 \le \beta \le n$, the fractional  maximal function $\mathbf{M}_\beta$ of a locally integrable function $f: \mathbb{R}^n \rightarrow \mathbb{R}$ is defined by:
\begin{align}\label{eq:Malpha}
\mathbf{M}_\beta f(x) = \sup_{\rho>0}{\rho^\beta \fint_{B_\rho(x)}{|f(y)|dy}}, \quad x \in \mathbb{R}^n.
\end{align}
When $\beta=0$, it coincides with the Hardy-Littlewood maximal function, $\mathbf{M}_0f = \mathbf{M}f$, defined by:
\begin{align}\label{eq:M0}
\mathbf{M}f(x) = \sup_{\rho>0}{\fint_{B_{\rho}(x)}|f(y)|dy}, \quad x \in \mathbb{R}^n,
\end{align}
for a given locally integrable function $f$ in $\mathbb{R}^n$.
\end{definition}

Recently, the maximal function $\mathbf{M}$ is a classical tool that was studied in functional spaces, Calculus of Variations, partial differential equations and so on. In an interesting work, the theorem by Hardy, Littlewood and Wiener asserted a fundamental result of maximal operator. That is the boundedness of maximal function $\mathbf{M}$ on $L^p(\mathbb{R}^n)$ when $1< p \le \infty$ as follows
\begin{align*}
\|\mathbf{M}f\|_{L^p(\mathbb{R}^n)} \le C\|f\|_{L^p(\mathbb{R}^n)}, \quad \forall f \in L^p(\mathbb{R}^n),
\end{align*}
where $C$ is a constant depending only on $n$ and $p$. Moreover, $\mathbf{M}$ is also said to be \emph{weak-type (1,1)} proved in a theorem by Hardy-Littlewood-Wiener, that means for all $\lambda>0$ and $f \in L^1(\mathbb{R}^n)$, it holds that
\begin{align*}
\mathcal{L}^n\left(\{\mathbf{M}f>\lambda\}\right) \le \frac{C(n)}{\lambda} \|f\|_{L^1(\mathbb{R}^n)}.
\end{align*}
Besides that, in this paper we also review and collect some well-known properties of maximal and fractional maximal operators,  whose proofs can be further given in many references as~\cite{AH, Gra97, 55Gra}.

\begin{lemma}\label{lem:boundM}
The operator $\mathbf{M}$ is bounded from $L^s(\mathbb{R}^n)$ to $L^{s,\infty}(\mathbb{R}^n)$ for $s \ge 1$, this means
\begin{align*}
\mathcal{L}^n\left(\{\mathbf{M}f>\lambda\}\right) \le \frac{C}{\lambda^s}\int_{\mathbb{R}^n}{|f(x)|^s dx}, \quad \mbox{ for all } \lambda>0.
\end{align*}
\end{lemma}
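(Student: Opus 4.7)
The statement is the classical Hardy--Littlewood--Wiener weak-type $(s,s)$ inequality for the maximal operator, and I would handle it in two steps, splitting on whether $s = 1$ or $s > 1$.

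For the base case $s = 1$, the plan is to run the standard Vitali covering argument. Fix $\lambda>0$ and set $E_\lambda := \{x \in \mathbb{R}^n : \mathbf{M}f(x) > \lambda\}$. By definition of $\mathbf{M}$, for each $x \in E_\lambda$ there exists a radius $\rho_x > 0$ with
\[
\fint_{B_{\rho_x}(x)} |f(y)|\,dy > \lambda, \qquad \text{hence} \qquad \mathcal{L}^n\bigl(B_{\rho_x}(x)\bigr) < \frac{1}{\lambda} \int_{B_{\rho_x}(x)} |f(y)|\,dy.
\]
One may assume the radii $\rho_x$ are uniformly bounded (otherwise truncate by exhausting $E_\lambda$ from inside with compact subsets). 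The Vitali $5r$-covering lemma then produces a countable disjoint subfamily $\{B_{\rho_i}(x_i)\}$ whose $5$-fold dilates cover $E_\lambda$, yielding
\[
\mathcal{L}^n(E_\lambda) \le 5^n \sum_i \mathcal{L}^n\bigl(B_{\rho_i}(x_i)\bigr) \le \frac{5^n}{\lambda}\sum_i \int_{B_{\rho_i}(x_i)} |f|\,dy \le \frac{5^n}{\lambda}\,\|f\|_{L^1(\mathbb{R}^n)},
\]
which is the desired bound with $C = 5^n$.

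For $s > 1$, I would simply invoke the strong $(s,s)$ boundedness of $\mathbf{M}$ recalled in the paragraph preceding the lemma, namely $\|\mathbf{M}f\|_{L^s(\mathbb{R}^n)} \le C(n,s)\|f\|_{L^s(\mathbb{R}^n)}$, together with Chebyshev's inequality:
\[
\mathcal{L}^n(E_\lambda) \le \lambda^{-s}\|\mathbf{M}f\|_{L^s(\mathbb{R}^n)}^s \le C(n,s)\,\lambda^{-s}\|f\|_{L^s(\mathbb{R}^n)}^s.
\]
Alternatively, a unified argument works directly for every $s \ge 1$ by applying Hölder's inequality inside each covering ball: from $\fint_{B_{\rho_x}(x)}|f| > \lambda$ one gets $\mathcal{L}^n(B_{\rho_x}(x))^{1/s} < \lambda^{-1}\bigl(\int_{B_{\rho_x}(x)} |f|^s\bigr)^{1/s}$, and then the Vitali step produces the estimate with $\|f\|_{L^s}^s$ on the right-hand side.

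There is no real obstacle: the entire content of the lemma is the Vitali covering construction plus an elementary application of Hölder or Chebyshev. The only mildly delicate point is the standard truncation ensuring the selected radii are bounded so that Vitali's lemma applies, but this is routine and produces no loss in the final constant.
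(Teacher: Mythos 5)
Your argument is correct and complete: the Vitali $5r$-covering argument for $s=1$, and either the strong $(s,s)$ bound plus Chebyshev or the unified H\"older-inside-each-ball variant for $s>1$, is exactly the classical Hardy--Littlewood--Wiener proof. The paper itself does not prove this lemma at all --- it simply cites standard references (\cite{AH,Gra97,55Gra}) --- so your write-up supplies the standard details those references contain. One minor simplification: the truncation step you flag as ``mildly delicate'' is unnecessary, since the inequality $\mathcal{L}^n(B_{\rho_x}(x))^{1/s} < \lambda^{-1}\|f\|_{L^s}$ already forces the selected radii to be uniformly bounded, so Vitali's lemma applies directly.
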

\begin{lemma}\label{lem:boundMlorentz}
In~\cite{55Gra}, it allows us to present a boundedness property of maximal function $\mathbf{M}$ in the Lorentz space $L^{s,t}(\mathbb{R}^n)$, for $s>1$ as follows:
\begin{align*}
\|\mathbf{M}f\|_{L^{s,t}(\mathbb{R}^n)} \le C \|f\|_{L^{s,t}(\mathbb{R}^n)}.
\end{align*}
\end{lemma}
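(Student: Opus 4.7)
The plan is to deduce this Lorentz-space boundedness from the two endpoint bounds for the Hardy–Littlewood maximal operator, namely the weak-$(1,1)$ estimate recalled just above in Lemma~\ref{lem:boundM} and the trivial $L^\infty \to L^\infty$ bound $\|\mathbf{M}f\|_{L^\infty} \le \|f\|_{L^\infty}$, via a real-interpolation / Marcinkiewicz-type argument adapted to Lorentz quasi-norms. Since the result is a classical one and is cited from~\cite{55Gra}, the aim is to outline the standard proof rather than produce anything novel.

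The first step is the usual level-set splitting. For each $\lambda>0$, write
\begin{align*}
f = f_\lambda^{(1)} + f_\lambda^{(2)}, \qquad f_\lambda^{(1)} := f\,\mathbbm{1}_{\{|f|>\lambda/2\}}, \qquad f_\lambda^{(2)} := f\,\mathbbm{1}_{\{|f|\le\lambda/2\}}.
\end{align*}
Because $\mathbf{M}$ is sublinear and $\|\mathbf{M}f_\lambda^{(2)}\|_{L^\infty} \le \lambda/2$, one gets the inclusion
\begin{align*}
\{x \in \mathbb{R}^n : \mathbf{M}f(x) > \lambda\} \subset \{x \in \mathbb{R}^n : \mathbf{M}f_\lambda^{(1)}(x) > \lambda/2\}.
\end{align*}
Applying the weak-$(1,1)$ inequality from Lemma~\ref{lem:boundM} to $f_\lambda^{(1)}$ then yields
\begin{align*}
\mathcal{L}^n\bigl(\{\mathbf{M}f > \lambda\}\bigr) \le \frac{C(n)}{\lambda}\int_{\{|f|>\lambda/2\}}|f(x)|\,dx = \frac{C(n)}{\lambda}\int_0^\infty \mathcal{L}^n\bigl(\{|f|>\max(\mu,\lambda/2)\}\bigr)\,d\mu,
\end{align*}
where the last identity uses the layer-cake representation of $\int|f_\lambda^{(1)}|$. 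A short computation bounds the right-hand side by a constant multiple of $\lambda^{-1}\int_{\lambda/2}^\infty \mathcal{L}^n(\{|f|>\mu\})\,d\mu$ plus a boundary term comparable to $\mathcal{L}^n(\{|f|>\lambda/2\})$.

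With this distributional estimate for $\mathbf{M}f$ in hand, the second step is to insert it into the Lorentz quasi-norm from Definition~\ref{def:Lorentz} and carry out the integration. For $0<t<\infty$, one writes
\begin{align*}
\|\mathbf{M}f\|_{L^{s,t}}^{t} = s\int_0^\infty \lambda^t\,\mathcal{L}^n(\{\mathbf{M}f>\lambda\})^{t/s}\,\frac{d\lambda}{\lambda},
\end{align*}
substitutes the bound above, and applies Fubini to swap the $\lambda$ and $\mu$ integrals; using $s>1$ ensures convergence at the $\lambda\to\infty$ end. A change of variable then collapses the double integral back into $s\int_0^\infty \mu^t\,\mathcal{L}^n(\{|f|>\mu\})^{t/s}\,d\mu/\mu = \|f\|_{L^{s,t}}^{t}$, up to a constant depending on $n,s,t$. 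The case $t=\infty$ (the Marcinkiewicz endpoint) is easier: it follows by taking the supremum in $\lambda$ in the distributional inequality and using a similar elementary estimate together with $s>1$.

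The one genuinely delicate point is making sure the interpolation goes through uniformly for all $0<t\le\infty$: the factor $1/(s-1)$ that appears when $s>1$ becomes the obstruction to $s=1$, and the $t=\infty$ endpoint must be handled separately. Once those are dispatched, the constant $C=C(n,s,t)$ is extracted from the Fubini-based computation, and the asserted inequality $\|\mathbf{M}f\|_{L^{s,t}(\mathbb{R}^n)} \le C\|f\|_{L^{s,t}(\mathbb{R}^n)}$ follows.
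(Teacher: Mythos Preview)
The paper does not actually prove this lemma: it is stated with an explicit attribution to Grafakos~\cite{55Gra} and no argument is given in the text. So there is no ``paper's own proof'' to compare your attempt against; the authors simply quote the result.

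Your sketch is the standard real-interpolation argument and is essentially correct. One small comment on accuracy: the step you label ``Fubini'' is, for general $t\neq s$, not a Fubini argument at all. After the level-set splitting you arrive at
\[
\mathcal{L}^n(\{\mathbf{M}f>\lambda\}) \;\le\; C\,d_f(\lambda/2) \;+\; \frac{C}{\lambda}\int_{\lambda/2}^{\infty} d_f(\mu)\,d\mu,
\]
and inserting this into the Lorentz quasi-norm requires controlling
\[
\int_0^\infty \lambda^{t-1}\Bigl(\tfrac{1}{\lambda}\int_{\lambda}^{\infty} d_f(\mu)\,d\mu\Bigr)^{t/s} d\lambda
\]
in terms of $\int_0^\infty \lambda^{t-1} d_f(\lambda)^{t/s}\,d\lambda$. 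When $t=s$ this is indeed Fubini, but for general $0<t\le\infty$ it is a weighted Hardy inequality (equivalently, a dyadic decomposition using that $d_f$ is nonincreasing, summing a geometric series with ratio $2^{t(1/s-1)}<1$). This is exactly where the hypothesis $s>1$ enters and where the constant picks up the factor comparable to $1/(s-1)$ that you mention. You already flag this as the delicate point, so the sketch is fine; just be aware that ``Fubini'' undersells what is needed.
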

The concept of \emph{cut-off fractional maximal function} was first mentioned in~\cite{PNJDE} in order to prove the gradient estimates of quasilinear elliptic equations in divergence form, and it becomes an effective tool and strategy to attain our desired results. In this study, it allows us to review some basic definitions and crucial properties associated to these operators, that will be proved if necessary.

\begin{definition}[Cut-off fractional maximal functions]\label{def:cut-off}
Let $\varrho>0$ and $0\le \beta \le n$, we define some additional cut-off maximal functions of a locally integrable function $f$ corresponding to the maximal function $\mathbf{M}f$ in \eqref{eq:M0} as follows
\begin{align*}
{\mathbf{M}}^{\varrho}f(x)  &= \sup_{0<\rho<\varrho} \fint_{B_\rho(x)}f(y)dy; \ \ {\mathbf{T}}^{\varrho}f(x) = \sup_{\rho \ge \varrho}\fint_{B_\rho(x)}f(y)dy,
\end{align*}
and corresponding to $\mathbf{M}_{\beta}f$ in~\eqref{eq:Malpha} as
\begin{align*}
{\mathbf{M}}^{\varrho}_{\beta}f(x)  &= \sup_{0<\rho<\varrho} \rho^{\beta} \fint_{B_\rho(x)}f(y)dy; \ \ {\mathbf{T}}^{\varrho}_{\beta}f(x) = \sup_{\rho \ge \varrho} \rho^{\beta}\fint_{B_\rho(x)}f(y)dy.
\end{align*}
\end{definition}
We remark here that if $\beta = 0$ then $\mathbf{M}^r_{\beta}f = \mathbf{M}^rf$ and $\mathbf{T}^r_{\beta}f = \mathbf{T}^r f$, for all $f \in L^1_{\mathrm{loc}}(\mathbb{R}^n)$. The following lemma can be inferred from from their definitions.

\begin{lemma}\label{lem:bound-M-beta}
Let $s \ge 1$ and $\beta \in \left[0,\frac{n}{s}\right)$. There exists $C=C(n,\beta)>0$ such that for any $f \in L^s(\mathbb{R}^n)$ there holds
\begin{align*}
\mathcal{L}^n\left(\left\{x \in \mathbb{R}^n: \ \mathbf{M}_{\beta}f(x)>\lambda\right\}\right) \le C \left(\frac{1}{\lambda^{s}}\int_{\mathbb{R}^n}|f(y)|^sdy\right)^{\frac{n}{n-\beta s}},
\end{align*}
for all $\lambda>0$.
\end{lemma}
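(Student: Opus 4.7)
The plan is to reduce the statement to the weak-type bound for the Hardy--Littlewood maximal operator given in Lemma~\ref{lem:boundM} by means of a Hedberg-style pointwise interpolation inequality. The case $\beta=0$ is precisely Lemma~\ref{lem:boundM}, so I may assume $\beta\in(0,n/s)$, and I may also assume $\|f\|_{L^s(\mathbb{R}^n)}>0$ since otherwise the inequality is trivial.

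First I would derive the pointwise bound
\begin{align*}
\mathbf{M}_\beta f(x) \le C\, \|f\|_{L^s(\mathbb{R}^n)}^{\beta s/n}\, \bigl(\mathbf{M}f(x)\bigr)^{1-\beta s/n}
\qquad\text{for all }x\in\mathbb{R}^n,
\end{align*}
with $C=C(n,\beta)$. To get this, for each $\rho>0$ I would estimate the quantity $\rho^\beta \fint_{B_\rho(x)}|f|\,dy$ in two ways: on the one hand it is bounded by $\rho^\beta \mathbf{M}f(x)$, and on the other hand, by H\"older's inequality applied on $B_\rho(x)$, it is bounded by $C(n)\,\rho^{\beta-n/s}\|f\|_{L^s(\mathbb{R}^n)}$. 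Taking the minimum of the two bounds and optimizing the free radius, i.e.\ choosing $\rho$ so that the two quantities coincide (explicitly $\rho=(\|f\|_{L^s}/\mathbf{M}f(x))^{s/n}$, up to constants, which is legitimate because $\mathbf{M}f(x)>0$ whenever $f\not\equiv 0$), yields the displayed interpolation inequality. Taking the supremum over $\rho$ on the left-hand side completes this step.

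Next I would insert this pointwise inequality into a level-set argument: if $\mathbf{M}_\beta f(x)>\lambda$, then
\begin{align*}
\mathbf{M}f(x) \;>\; c\,\lambda^{n/(n-\beta s)}\,\|f\|_{L^s(\mathbb{R}^n)}^{-\beta s/(n-\beta s)} \;=:\; \mu,
\end{align*}
with $c=c(n,\beta)$. Hence $\{\mathbf{M}_\beta f>\lambda\}\subset\{\mathbf{M}f>\mu\}$. Applying Lemma~\ref{lem:boundM} to the right-hand side gives
\begin{align*}
\mathcal{L}^n\bigl(\{\mathbf{M}_\beta f>\lambda\}\bigr)
\le C\,\mu^{-s}\,\|f\|_{L^s(\mathbb{R}^n)}^{s}.
\end{align*}
A routine simplification of the exponents, using the identity $\tfrac{\beta s^2}{n-\beta s}+s=\tfrac{ns}{n-\beta s}$, collapses the right-hand side precisely to $C\bigl(\lambda^{-s}\int_{\mathbb{R}^n}|f|^s\,dy\bigr)^{n/(n-\beta s)}$, which is the claim.

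There is no serious obstacle; the only minor point requiring care is the use of Lemma~\ref{lem:boundM} at the endpoint $s=1$, which is exactly the weak $(1,1)$ inequality of Hardy--Littlewood--Wiener recalled just before Lemma~\ref{lem:boundM}, while for $s>1$ the bound follows from the strong $L^s$-bound via Chebyshev's inequality. Thus the argument is uniform in $s\ge 1$, and the constant $C$ depends only on $n$ and $\beta$ (through the constants in the H\"older estimate and in Lemma~\ref{lem:boundM}).
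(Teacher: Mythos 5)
Your proof is correct, but it takes a genuinely different route from the paper's. The paper first applies Jensen's inequality inside the supremum to get the pointwise bound $\left[\mathbf{M}_{\beta}f(x)\right]^{s}\le \mathbf{M}_{\beta s}(|f|^{s})(x)$, so that $\{\mathbf{M}_{\beta}f>\lambda\}\subset\{\mathbf{M}_{\beta s}(|f|^{s})>\lambda^{s}\}$, and then invokes the weak-type $\left(1,\tfrac{n}{n-\beta s}\right)$ estimate for the fractional maximal operator $\mathbf{M}_{\beta s}$ acting on $L^{1}$, quoted from \cite[Lemma 2.3]{PNJDE}; in other words, it reduces the general case to the $s=1$ case of a fractional maximal operator with the larger order $\beta s$ and outsources that case. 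You instead prove the Hedberg-type interpolation inequality $\mathbf{M}_{\beta}f\le C\|f\|_{L^{s}}^{\beta s/n}(\mathbf{M}f)^{1-\beta s/n}$ (valid since $\beta-n/s<0$ makes one competing bound decreasing and the other increasing in $\rho$) and reduce everything to the weak-type bound for the non-fractional operator $\mathbf{M}$, i.e.\ to Lemma~\ref{lem:boundM} of this paper; the exponent identity $\tfrac{\beta s^{2}}{n-\beta s}+s=\tfrac{ns}{n-\beta s}$ that you use is correct, as is the inclusion of level sets. What your approach buys is self-containedness: no external citation is needed, and the Hedberg inequality is of independent use elsewhere. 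What the paper's approach buys is brevity and the avoidance of the degenerate case $\mathbf{M}f(x)\in\{0,\infty\}$ (which you correctly dispose of, but must mention). One pedantic remark applying to both arguments: the constants produced (through the H\"older step and through the exponent $\tfrac{n}{n-\beta s}$) in fact also depend on $s$, not only on $n$ and $\beta$ as the statement asserts; this looseness is already present in the paper and is harmless for its applications, where $s$ is fixed.
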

\begin{proof}
For any $x \in \mathbb{R}^n$, using definition of fractional maximal function $\mathbf{M}_{\beta}$ and H{\"o}lder's inequality we have
\begin{align*}
\left[\mathbf{M}_{\beta} f(x)\right]^s &= \left(\sup_{\rho>0} \rho^{\beta} \fint_{B_{\rho}(x)}|f(y)|dy\right)^s \\ 
& \le \sup_{\rho>0} \rho^{\beta s} \fint_{B_{\rho}(x)}|f(y)|^sdy = \mathbf{M}_{\beta s} (|f|^s)(x).
\end{align*}
It guarantees that for all $\lambda>0$ there holds
\begin{equation*}
\mathcal{L}^n \left(\left\{x \in \mathbb{R}^n: \ \mathbf{M}_{\beta} f(x) > \lambda \right\}\right) \le \displaystyle{\mathcal{L}^n\left(\left\{x \in \mathbb{R}^n: \ \mathbf{M}_{\beta s} (|f|^s)(x)> \lambda^s\right\}\right)}.
\end{equation*}
Moreover, by \cite[Lemma 2.3]{PNJDE} we know that
\begin{equation*}
\displaystyle{\mathcal{L}^n\left(\left\{x \in \mathbb{R}^n: \ \mathbf{M}_{\beta s} (|f|^s)(x)> \lambda^s\right\}\right)} \leq C(n,\beta)\displaystyle{\left(\frac{1}{\lambda^s} {\displaystyle{\int_{\mathbb{R}^n}|f(y)|^sdy}}\right)^{\frac{n}{n-\beta s}}},
\end{equation*}
which completes the proof.
\end{proof}

\begin{lemma}\label{lem:MrMr}
For any $\varrho>0$ and $\beta \in [0,n)$, there holds
\begin{align}\nonumber
{\mathbf{M}}\mathbf{M}_{\beta}f(x) &\le \max \left\{ \mathbf{M}^{2\varrho}_{\beta}f(x); \ \mathbf{M}^{\varrho} \mathbf{T}_{\beta}^{\varrho}f(x) ; \  \mathbf{T}^{\varrho} \mathbf{M}_{\beta}f(x) \right\},
\end{align}
for all $x \in \mathbb{R}^n$ and $f \in L^1_{\mathrm{loc}}(\mathbb{R}^n)$.
\end{lemma}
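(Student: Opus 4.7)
The plan is to establish the bound by splitting the outer supremum in $\mathbf{M}\mathbf{M}_\beta f(x) = \sup_{R>0} \fint_{B_R(x)} \mathbf{M}_\beta f(y)\,dy$ at the threshold $R = \varrho$. When $R \ge \varrho$, the average is immediately dominated by $\mathbf{T}^\varrho \mathbf{M}_\beta f(x)$, which is the third entry on the right-hand side, so the large-radius regime is handled in one stroke.

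For $R < \varrho$, I would use the pointwise identity $\mathbf{M}_\beta f(y) = \max\{\mathbf{M}^\varrho_\beta f(y), \mathbf{T}^\varrho_\beta f(y)\}$ obtained by cutting the inner supremum at the same threshold $\varrho$. Bounding the max by the sum and averaging over $B_R(x)$ then yields two contributions. The $\mathbf{T}^\varrho_\beta$-contribution, after taking the supremum over $R<\varrho$, is by definition $\mathbf{M}^\varrho \mathbf{T}^\varrho_\beta f(x)$, matching the second entry of the max.

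The remaining work is to dominate $\sup_{R<\varrho}\fint_{B_R(x)}\mathbf{M}^\varrho_\beta f(y)\,dy$ by $\mathbf{M}^{2\varrho}_\beta f(x)$, up to the dimensional constant absorbed into the statement. For this I would further split the inner sup at $r=R$. When $R \le r < \varrho$, the geometric inclusion $B_r(y) \subset B_{2r}(x)$ gives the clean pointwise estimate $r^\beta \fint_{B_r(y)}|f| \le 2^{n-\beta}(2r)^\beta \fint_{B_{2r}(x)}|f| \le 2^{n-\beta}\mathbf{M}^{2\varrho}_\beta f(x)$, since $2r<2\varrho$. When $r<R$, the ball $B_r(y)$ lies in $B_{2R}(x)$, so I replace $f$ by $f\chi_{B_{2R}(x)}$ and apply the weak-type estimate of Lemma~\ref{lem:bound-M-beta} with $s=1$, together with a standard layer-cake/Cavalieri decomposition, to bound the average by $C\|f\chi_{B_{2R}(x)}\|_{L^1}/|B_R|^{(n-\beta)/n}$. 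Rewriting the $L^1$ norm as $(2R)^{n-\beta}\cdot (2R)^\beta \fint_{B_{2R}(x)}|f|$ and invoking $2R<2\varrho$ converts this into a constant multiple of $\mathbf{M}^{2\varrho}_\beta f(x)$.

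The main obstacle is precisely this small-$r$ regime: the naive inclusion $B_r(y)\subset B_{r+R}(x)$ alone produces a volume ratio $(R/r)^n$ that blows up as $r\to 0$, so no pointwise domination of $\mathbf{M}^\varrho_\beta f(y)$ by $\mathbf{M}^{2\varrho}_\beta f(x)$ is available. The averaging over $B_R(x)$ combined with the weak-type behaviour of $\mathbf{M}_\beta$ (as in Lemma~\ref{lem:bound-M-beta}) is what tames the singularity and produces the desired dimensionless bound, with the universal constant $C(n,\beta)$ absorbed into the max on the right-hand side, following the convention adopted throughout this paper.
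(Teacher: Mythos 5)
Your outer split at $R=\varrho$, the identity $\mathbf{M}_\beta f=\max\{\mathbf{M}^\varrho_\beta f,\mathbf{T}^\varrho_\beta f\}$, and the treatment of the intermediate regime $R\le r<\varrho$ via $B_r(y)\subset B_{2r}(x)$ are all sound (modulo the dimensional constants they generate). The genuine gap sits exactly where you place ``the main obstacle'', namely the regime $r<R<\varrho$. There you appeal to Lemma~\ref{lem:bound-M-beta} with $s=1$ plus a layer-cake integration to get $\fint_{B_R(x)}\mathbf{M}^{\varrho}_\beta(f\chi_{B_{2R}(x)})\,dy\le C\|f\chi_{B_{2R}(x)}\|_{L^1}\,|B_R|^{-(n-\beta)/n}$. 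For $\beta>0$ this is fine, since $\int_0^\infty\min\{|B_R|,\,C(\|g\|_{L^1}/\lambda)^{n/(n-\beta)}\}\,d\lambda$ converges because $n/(n-\beta)>1$. But the lemma is stated for $\beta\in[0,n)$, and at $\beta=0$ the exponent is $1$ and the tail $\int^\infty \lambda^{-1}\,d\lambda$ diverges: weak-$(1,1)$ does not give local integrability of $\mathbf{M}g$. This is not a repairable technicality of your route: the inequality you are aiming for in that regime, $\fint_{B_R(x)}\mathbf{M}(f\chi_{B_{2R}(x)})\,dy\le C\fint_{B_{2R}(x)}|f|$, is false. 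Take $f=\chi_{B_\delta(x_0)}$ with $|x-x_0|=R/2$ and $\delta\to0$: the left-hand side is of order $\delta^n R^{-n}\log(R/\delta)$ while the right-hand side is of order $\delta^n R^{-n}$, so no constant $C$ works (this is the classical $L\log L$ phenomenon, $\mathbf{M}\mathbf{M}f\approx \mathbf{M}_{L\log L}f\not\lesssim\mathbf{M}f$). The same example, with $\delta\ll R\ll\varrho$, shows that for $\beta=0$ the small-$R$, small-$r$ contribution also dominates $\mathbf{T}^\varrho\mathbf{M}f(x)$ and $\mathbf{M}^\varrho\mathbf{T}^\varrho f(x)$, so it cannot be rerouted through the other two terms either; any proof of the $\beta=0$ case must look different from what you propose (the paper itself gives no in-text argument, deferring entirely to \cite[Lemmas 3.1 and 3.3]{PNJDE}, so there is nothing to compare line by line).

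A secondary issue: you bound the pointwise max by a sum and later ``absorb'' the constants ``into the max''. That changes the statement: $A\le C\max\{X,Y,Z\}$ with $C>1$ is strictly weaker than $A\le\max\{X,Y,Z\}$, and the paper invokes the lemma in its literal constant-free form in the proof of Theorem~\ref{theo:M_lambda-beta} (to conclude $\mathbf{M}(\mathbb{U}_\beta)(\zeta)\le\max\{\mathbf{M}^{2\varrho}_\beta(\mathcal{H}(x,\nabla u))(\zeta),\,4^n\lambda\}$). A fixed $C(n,\beta)$ would actually be harmless for the good-$\lambda$ argument downstream, but then the lemma should be restated accordingly. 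In summary: for $\beta>0$ your argument proves a correct constant-dependent version, with $C(n,\beta)\to\infty$ as $\beta\to0^+$; at $\beta=0$, which the statement includes and the paper uses, the key step fails and cannot be salvaged by the same decomposition.
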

\begin{proof}
The conclusion of this lemma can be directly obtained by applying~\cite[Lemma 3.1 and Lemma 3.3]{PNJDE}.
\end{proof}

In this section, the following statement of Young's inequality will be necessary for later use. This inequality is standard, but we also offer a short proof here for the convenience of the reader.
\begin{lemma}[H{\"o}lder and Young's inequality] \label{lem:HY}
Let $\varphi, \psi \in L^r(\Omega)$, there holds
\begin{align}\label{HY-inq}
\int_{\Omega} |\varphi(x)|^{r-s}|\psi(x)|^sdx \le \varepsilon \int_{\Omega} |\varphi(x)|^r dx +  \varepsilon^{1- \frac{r}{s}} \int_{\Omega} |\psi(x)|^rdx,
\end{align}
for all $\varepsilon>0$ and $0 <s < r$.
\end{lemma}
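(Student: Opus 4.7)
The plan is a two-step reduction that combines the two classical inequalities named in the title. First I would apply H\"older's inequality on $\Omega$ with the conjugate exponents $p=\frac{r}{r-s}$ and $p'=\frac{r}{s}$; these are admissible since $\frac{1}{p}+\frac{1}{p'}=\frac{r-s}{r}+\frac{s}{r}=1$ and $p,p'>1$ because $0<s<r$. Splitting the integrand as $|\varphi|^{r-s}\cdot|\psi|^s$ and applying H\"older gives
$$\int_{\Omega}|\varphi(x)|^{r-s}|\psi(x)|^s\,dx \le \left(\int_{\Omega}|\varphi|^r\,dx\right)^{\frac{r-s}{r}}\left(\int_{\Omega}|\psi|^r\,dx\right)^{\frac{s}{r}}.$$

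In the second step I would apply a weighted numerical Young inequality to the two nonnegative scalars
$A=\left(\int_{\Omega}|\varphi|^r\,dx\right)^{1/r}$ and
$B=\left(\int_{\Omega}|\psi|^r\,dx\right)^{1/r}$, so that the right-hand side above is $A^{r-s}B^{s}$. Writing $A^{r-s}B^s=(\lambda A^{r-s})(\lambda^{-1}B^s)$ for a free parameter $\lambda>0$ and invoking $uv\le \frac{u^p}{p}+\frac{v^{p'}}{p'}$ with the same conjugate pair, one obtains a bound of the form $c_1\lambda^{r/(r-s)}A^{r}+c_2\lambda^{-r/s}B^{r}$. Choosing $\lambda$ so that $c_1\lambda^{r/(r-s)}=\varepsilon$ forces the second coefficient to be a constant multiple of $\varepsilon^{-(r-s)/s}=\varepsilon^{1-r/s}$, which is exactly the desired weight (and the sign $1-r/s<0$ is consistent with the usual blow-up as $\varepsilon\downarrow 0$).

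I expect no real obstacle: the statement is essentially a rescaled standard inequality, and the proof amounts to choosing the right conjugate exponents and the right $\lambda$. The only thing requiring a bit of care is the arithmetic bookkeeping of the exponents, namely verifying that the identity $-\frac{r-s}{s}=1-\frac{r}{s}$ produces precisely the exponent appearing in the statement, and that the remaining multiplicative constants $\frac{r-s}{r}$, $\frac{s}{r}$ (together with any powers of $\frac{r}{r-s}$) can be absorbed. An entirely pointwise variant, applying the weighted Young inequality directly to $|\varphi(x)|^{r-s}\cdot|\psi(x)|^s$ before integrating, would deliver the same bound without going through H\"older at all; I would mention this as an alternative, but keep the H\"older--then--Young route since it matches the name of the lemma.
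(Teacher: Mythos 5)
Your proposal is correct and follows essentially the same route as the paper: H\"older with conjugate exponents $\tfrac{r}{r-s}$ and $\tfrac{r}{s}$, followed by a weighted Young inequality on the product $A^{r-s}B^{s}$ (the paper inserts the weights $\varepsilon^{\frac{r-s}{r}}$ and $\varepsilon^{-\frac{r-s}{r}}$ directly rather than optimizing over a free parameter $\lambda$, but this is the same computation). The multiplicative constants $\tfrac{r-s}{r}$ and $\tfrac{s}{r}$ are each less than $1$, so no absorption is even needed.
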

\begin{proof}
Thanks to H{\"o}lder and Young's inequality, there holds
\begin{align*}
\int_{\Omega} |\varphi(x)|^{r-s}|\psi(x)|^sdx & \le \left(\int_{\Omega} |\varphi(x)|^{r} dx\right)^{\frac{r-s}{r}} \left(\int_{\Omega} |\psi(x)|^{r} dx\right)^{\frac{s}{r}} \\
& = \left(\varepsilon \int_{\Omega} |\varphi(x)|^{r} dx\right)^{\frac{r-s}{r}} \left(\varepsilon^{-\frac{r-s}{s}}\int_{\Omega} |\psi(x)|^{r} dx\right)^{\frac{s}{r}} \\
& \le \frac{r-s}{r} \varepsilon \int_{\Omega} |\varphi(x)|^{r} dx + \frac{s}{r} \varepsilon^{-\frac{r-s}{s}}  \int_{\Omega} |\psi(x)|^{r}dx,
\end{align*}
which deduce to~\eqref{HY-inq} with notice that $0<s/r<1$.
\end{proof}

\section{Comparison results}
\label{sec:comparison_sec}
In this section, we discuss some local interior and boundary comparison estimates for solution in \eqref{eq:main_double}, that imply the validity of global estimates in our development later.

\begin{proposition}\label{prop1}
Let $u \in W^{1,1}_0(\Omega)$ be a distribution solution to~\eqref{eq:main_double} under conditions in~\eqref{eq:cond1}, \eqref{eq:cond2} and~\eqref{eq:cond3} with given data $\mathbf{F}$ satisfying $\mathcal{H}(x,\nabla u), \ \mathcal{H}(x,\mathbf{F}) \in L^1(\Omega)$. Then we have
\begin{equation}\label{eq:prop1}
\int_{\Omega} \mathcal{H}(x,\nabla u) dx \le C \int_{\Omega} \mathcal{H}(x,\mathbf{F}) dx.
\end{equation}
Here, it remarks that the constant $C$ depends only on \texttt{data}.
\end{proposition}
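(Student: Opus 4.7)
The plan is to take $u$ itself as the test function in the weak formulation from Lemma~\ref{lem:solu}. Since $u \in W_0^{1,1}(\Omega)$ and $\mathcal{H}(x,\nabla u) \in L^1(\Omega)$ by hypothesis, $\varphi = u$ is an admissible test function, and therefore
\begin{align*}
\int_\Omega \langle \mathcal{A}(x,\nabla u), \nabla u \rangle\, dx = \int_\Omega \langle \mathcal{B}(x,\mathbf{F}), \nabla u \rangle\, dx.
\end{align*}
From this identity I would derive \eqref{eq:prop1} by bounding the left-hand side below in terms of $\mathcal{H}(x,\nabla u)$ and the right-hand side above in terms of $\mathcal{H}(x,\mathbf{F})$ and a small multiple of $\mathcal{H}(x,\nabla u)$, which then gets absorbed.

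For the lower bound on the left-hand side, I would invoke the monotonicity inequality \eqref{eq:dk8} with $y_1 = \nabla u$ and $y_2 = 0$. Since the growth condition \eqref{eq:cond3}$_1$ forces $\mathcal{A}(x,0)=0$, this yields
\begin{align*}
\tilde{\nu}\,\mathcal{H}(x,\nabla u) \;=\; \tilde{\nu}\bigl(|\nabla u|^p + a(x)|\nabla u|^q\bigr) \;\le\; \langle \mathcal{A}(x,\nabla u), \nabla u \rangle
\end{align*}
pointwise a.e.\ in $\Omega$. For the upper bound on the right-hand side, I would use the structural growth \eqref{eq:B-cond} of $\mathcal{B}$ together with Cauchy--Schwarz to get
\begin{align*}
|\langle \mathcal{B}(x,\mathbf{F}), \nabla u\rangle| \;\le\; L\bigl(|\mathbf{F}|^{p-1}|\nabla u| + a(x)|\mathbf{F}|^{q-1}|\nabla u|\bigr),
\end{align*}
and then apply Young's inequality (Lemma~\ref{lem:HY}, applied separately with exponents $p$ and $q$, and in the $q$-term absorbing the factor $a(x)$ into both sides) to produce, for any $\varepsilon>0$,
\begin{align*}
L\bigl(|\mathbf{F}|^{p-1}|\nabla u| + a(x)|\mathbf{F}|^{q-1}|\nabla u|\bigr) \;\le\; \varepsilon\,\mathcal{H}(x,\nabla u) + C(\varepsilon,p,q,L)\,\mathcal{H}(x,\mathbf{F}).
\end{align*}

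Combining these two estimates after integration over $\Omega$ gives
\begin{align*}
\tilde{\nu}\int_\Omega \mathcal{H}(x,\nabla u)\, dx \;\le\; \varepsilon \int_\Omega \mathcal{H}(x,\nabla u)\, dx + C(\varepsilon) \int_\Omega \mathcal{H}(x,\mathbf{F})\, dx,
\end{align*}
and then choosing $\varepsilon = \tilde{\nu}/2$ lets me absorb the first term of the right-hand side into the left, producing \eqref{eq:prop1} with a constant depending only on $\texttt{data}$. There is no serious obstacle here; the only delicate points are (i) justifying that $u$ itself lies in the admissible test-function class from Lemma~\ref{lem:solu} (which is exactly the content of that lemma under the assumption $\mathcal{H}(x,\nabla u)\in L^1(\Omega)$), and (ii) splitting the $p$- and $q$-contributions in Young's inequality without losing the $a(x)$ weight, which is handled cleanly by treating the two terms separately and keeping $a(x)$ as a multiplicative factor throughout.
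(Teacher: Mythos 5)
Your proposal is correct and follows essentially the same route as the paper: test with $\varphi=u$ via Lemma~\ref{lem:solu}, bound the left-hand side below by $\mathcal{H}(x,\nabla u)$ using the monotonicity \eqref{eq:dk8} at $y_2=0$, bound the right-hand side via \eqref{eq:B-cond} and Lemma~\ref{lem:HY} applied separately to the $p$- and $q$-terms with the weight $a(x)$ split as $a^{(r-1)/r}\cdot a^{1/r}$, and absorb. If anything, your explicit remark that \eqref{eq:cond3}$_1$ forces $\mathcal{A}(x,0)=0$ makes the lower bound slightly more carefully justified than in the paper's own write-up.
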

\begin{proof}
Since $u \in W^{1,1}_0(\Omega)$ and $\mathcal{H}(x,\nabla u) \in L^1(\Omega)$, applying Lemma~\ref{lem:solu} we may choose $u$ as a test function in variational formula~\eqref{eq:solu} to obtain
\begin{align*}
\int_\Omega{\langle \mathcal{A}(x,\nabla u),\nabla u \rangle dx} = \int_\Omega{\langle \mathcal{B}(x,\mathbf{F}), \nabla u \rangle dx}.
\end{align*}
The conditions on operators $\mathcal{A}$ and $\mathcal{B}$ in~\eqref{eq:dk8} and~\eqref{eq:B-cond} allow us to get
\begin{align}\nonumber
\int_{\Omega} \mathcal{H}(x,\nabla u) dx & \le \tilde{\nu} \int_\Omega{\langle \mathcal{A}(x,\nabla u),\nabla u \rangle dx} = \tilde{\nu}\int_\Omega{\langle \mathcal{B}(x,\mathbf{F}),\nabla u \rangle dx} \\ \nonumber
&\le \tilde{\nu}^2\int_\Omega{|\mathbf{F}|^{p-1}|\nabla u| dx} + C\int_\Omega{|a(x)||\mathbf{F}|^{q-1}|\nabla u| dx}\\ \label{est-I12}
&=: \tilde{\nu}^2(I_1 + I_2).
\end{align}
In order to deal with two terms on the right hand side of~\eqref{est-I12}, we apply H{\"o}lder's and Young's inequalities in Lemma~\ref{lem:HY} for any $\varepsilon>0$. More precisely, the first term $I_1$ can be estimated by applying Lemma~\ref{lem:HY} with 
$$ r = p, \ s = p-1, \ \varphi = |\nabla u|  \ \mbox{ and } \ \psi = |\mathbf{F}|,$$ 
it follows that
\begin{align}\label{est-I12-1}
I_1 \le \varepsilon \int_{\Omega} |\nabla u|^{p}dx + \varepsilon^{-\frac{1}{p-1}} \int_{\Omega} |\mathbf{F}|^{p} dx.
\end{align}
Proceeding analogously, the second term $I_2$ can be estimated with 
$$r = q, \ s = q-1, \ \varphi = |a(x)|^{\frac{1}{q}}|\nabla u| \ \mbox{ and } \ \psi = |a(x)|^{\frac{1}{q}}|\mathbf{F}|,$$ 
to discover that
\begin{align}\nonumber
I_2 & =  \int_\Omega{|a(x)|^{\frac{q-1}{q}}|\mathbf{F}|^{q-1} |a(x)|^{\frac{1}{q}}|\nabla u| dx} \\ \label{est-I12-2}
&\le \varepsilon \int_{\Omega} a(x) |\nabla u|^{q}dx + \varepsilon^{-\frac{1}{q-1}} \int_{\Omega} a(x) |\mathbf{F}|^{q} dx.
\end{align}
Taking~\eqref{est-I12-1} and~\eqref{est-I12-2} into account~\eqref{est-I12}, we may choose $\varepsilon=\frac{1}{2\tilde{\nu}^2}$ to obtain~\eqref{eq:prop1}, which completes the proof of lemma.
\end{proof}

\begin{lemma}\label{lem:global-M-beta}
Let $\beta \in [0,n)$ and $u$ be a distributional solution to~\eqref{eq:main_double}. Then there exists a constant $C=C(\texttt{data},\beta)>0$ such that
\begin{align}\label{eq:M-beta}
\int_{\Omega} \mathbf{M}_{\beta} \left(\mathcal{H}(x,\nabla u)\right) dx \le C  (\mathrm{diam}(\Omega))^{\beta} \int_{\Omega}  \mathcal{H}(x,\mathbf{F}) dx.
\end{align}
\end{lemma}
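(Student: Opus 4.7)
The plan is to combine the weak-type distributional bound for $\mathbf{M}_\beta$ from Lemma \ref{lem:bound-M-beta} with the layer-cake representation, in order to control $\|\mathbf{M}_\beta(\mathcal{H}(\cdot,\nabla u))\|_{L^1(\Omega)}$ by $\|\mathcal{H}(\cdot,\nabla u)\|_{L^1(\Omega)}$ up to a geometric factor of $(\mathrm{diam}(\Omega))^\beta$, and then close the estimate with Proposition \ref{prop1} to pass from $\nabla u$ to $\mathbf{F}$.

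Concretely, I would set $g := \mathcal{H}(\cdot,\nabla u)$, extended by zero outside $\Omega$, so that $g \in L^1(\mathbb{R}^n)$ with $\|g\|_{L^1(\mathbb{R}^n)} = \|g\|_{L^1(\Omega)}$. Applying Lemma \ref{lem:bound-M-beta} with $s=1$ yields
\[
\mathcal{L}^n\bigl(\{\mathbf{M}_\beta g > \lambda\}\bigr) \le C(n,\beta)\left(\frac{\|g\|_{L^1(\Omega)}}{\lambda}\right)^{\frac{n}{n-\beta}}, \qquad \lambda>0,
\]
while trivially $\mathcal{L}^n(\{x\in\Omega : \mathbf{M}_\beta g(x)>\lambda\}) \le |\Omega|$. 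The layer-cake identity
\[
\int_\Omega \mathbf{M}_\beta g\,dx = \int_0^\infty \mathcal{L}^n\bigl(\{x\in\Omega:\mathbf{M}_\beta g(x)>\lambda\}\bigr)\,d\lambda
\]
is then split at a threshold $\lambda_0 \sim \|g\|_{L^1(\Omega)}|\Omega|^{-(n-\beta)/n}$, using the trivial bound on $(0,\lambda_0)$ and the weak-type bound on $(\lambda_0,\infty)$. For $\beta\in(0,n)$ the tail integral $\int_{\lambda_0}^\infty \lambda^{-n/(n-\beta)}d\lambda$ converges, and a direct optimization produces
\[
\int_\Omega \mathbf{M}_\beta g\,dx \le C(n,\beta)\,|\Omega|^{\beta/n}\,\|g\|_{L^1(\Omega)}.
\]
Since $|\Omega|^{\beta/n} \le c_n\,(\mathrm{diam}(\Omega))^\beta$ and Proposition \ref{prop1} gives $\|g\|_{L^1(\Omega)} \le C(\texttt{data})\,\|\mathcal{H}(\cdot,\mathbf{F})\|_{L^1(\Omega)}$, combining these delivers \eqref{eq:M-beta}.

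The main delicate point is the balancing in the layer-cake step: the tail integral behaves well only for $\beta>0$, and the resulting constant is proportional to $(n-\beta)/\beta$, which degenerates as $\beta\to 0^+$. This reflects the well-known failure of $\mathbf{M}_0=\mathbf{M}$ to be of strong type $(1,1)$, so $\int_\Omega \mathbf{M}g$ cannot be controlled by $\|g\|_{L^1(\Omega)}$ alone. Handling the endpoint $\beta=0$ within the dependencies recorded in \texttt{data} would plausibly require invoking self-improving higher integrability of $\mathcal{H}(\cdot,\nabla u)$ (a Gehring-type reverse H\"older inequality available for double phase solutions under \eqref{eq:cond2}), after which the strong-type bound on $\mathbf{M}$ from Lemma \ref{lem:boundMlorentz} applies. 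The only other technical point is to verify that the optimization over $\lambda_0$ and the geometric inequality $|\Omega|^{\beta/n} \le c_n\,(\mathrm{diam}(\Omega))^\beta$ combine cleanly, so that the final dependence on $\beta$ is absorbed into the constant $C(\texttt{data},\beta)$ claimed in the statement.
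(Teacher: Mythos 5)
Your argument is essentially the paper's own proof: both split the layer-cake integral for $\int_\Omega\mathbf{M}_\beta(\mathcal{H}(\cdot,\nabla u))\,dx$ at a threshold, bound the low part trivially by the measure of $\Omega$, apply the weak-type bound of Lemma \ref{lem:bound-M-beta} with $s=1$ to the tail, balance the threshold, and invoke Proposition \ref{prop1}; the paper's choice $\eta_0=(\mathrm{diam}\,\Omega)^{\beta-n}\int_\Omega\mathcal{H}(x,\mathbf{F})\,dx$ is exactly your optimized $\lambda_0$ up to the comparison $|\Omega|\le c_n(\mathrm{diam}\,\Omega)^n$. Your caveat about the endpoint is correct and applies verbatim to the paper's proof as well: the tail integral $\int_{\eta_0}^\infty\eta^{-n/(n-\beta)}\,d\eta$ diverges when $\beta=0$ and the constant degenerates as $\beta\to0^+$, so the estimate is genuinely available only for $\beta\in(0,n)$ under the stated $L^1$ hypotheses (the case $\beta=0$ is in any event handled separately in Theorem \ref{theo:M_lambda}, which only needs the weak $(1,1)$ bound on level sets rather than the integrability of $\mathbf{M}(\mathcal{H}(\cdot,\nabla u))$).
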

\begin{proof}
For every $\eta_0>0$, there holds
\begin{align}\nonumber
\int_{\Omega} \mathbf{M}_{\beta} \left(\mathcal{H}(x,\nabla u)\right) dx & = \int_0^{\infty} \mathcal{L}^n \left(\{x \in \Omega: \, \mathbf{M}_{\beta} \left(\mathcal{H}(x,\nabla u)\right) > \eta\}\right) \, d\eta \\ \label{eq:M-beta-1}
& \le C(n) (\mathrm{diam}(\Omega))^n \eta_0 + \int_{\eta_0}^{\infty} \mathcal{L}^n \left(\{x \in \Omega: \, \mathbf{M}_{\beta} \left(\mathcal{H}(x,\nabla u)\right) > \eta\}\right) \, d\eta.
\end{align}
By applying Proposition~\ref{prop1} and the boundedness property of fractional maximal function $\mathbf{M}_{\beta}$ in Lemma~\ref{lem:bound-M-beta}, with $s = 1$, we conclude from~\eqref{eq:M-beta-1} that
\begin{align}\nonumber
\int_{\Omega} \mathbf{M}_{\beta} \left(\mathcal{H}(x,\nabla u)\right) dx 
& \le C(n) (\mathrm{diam}(\Omega))^n \eta_0 + C(n,\beta) \eta_0^{\frac{n}{\beta-n}} \left(\int_{\Omega} \mathcal{H}(x,\nabla u)dx\right)^{\frac{n}{n-\beta}} \\  \label{eq:M-beta-2}
& \le  C(n) (\mathrm{diam}(\Omega))^n \eta_0 + C(\texttt{data},\beta) \eta_0^{\frac{n}{\beta-n}} \left(\int_{\Omega} \mathcal{H}(x,\mathbf{F})dx\right)^{\frac{n}{n-\beta}}.
\end{align}
Let us balance the contribution of two terms on the right hand side of~\eqref{eq:M-beta-2} by taking 
$$\eta_0 = (\mathrm{diam}(\Omega))^{\beta - n} \int_{\Omega} \mathcal{H}(x,\mathbf{F})dx,$$ 
which gives us the desired estimate. 
\end{proof}

\subsection{Local interior estimates}\label{subsec:local}
Let us reproduce a very important result obtained by M. Colombo and G. Mingione in~\cite[Theorem 1.1]{CoMin2016} that may be useful when dealing with problem \eqref{eq:main_double}, as in Lemma \eqref{lem:Mingione} below. This result actually has been extended by S. Byun in \cite[Theorem 2.2]{Byun2017Cava} up to the boundary.
\begin{lemma}
\label{lem:Mingione}
Let $u \in W^{1,1}(\Omega)$ be a distributional solution to \eqref{eq:main_double} under the assumptions \eqref{eq:cond1}, \eqref{eq:cond2} and \eqref{eq:cond3} with $\mathcal{H}(x,Du), \mathcal{H}(x,\mathbf{F}) \in L^1(\Omega)$. Then, for every $\gamma>1$, there exist $r_0=r_0(\texttt{data},\gamma)>0$ and a constant $C=C(\texttt{data},\gamma) \ge 1$ such that the following inequality
\begin{align}
\label{eq:mingione}
\left(\fint_{B_{R/2}}{[\mathcal{H}(x,Du)]^\gamma dx} \right)^{\frac{1}{\gamma}} \le C\fint_{B_R}{\mathcal{H}(x,Du)dx} + C \left(\fint_{B_R}{[\mathcal{H}(x,\mathbf{F})]^\gamma dx} \right)^{\frac{1}{\gamma}},
\end{align}
holds for every ball $B_R \subset \Omega$ such that $R \le r_0$. 
\end{lemma}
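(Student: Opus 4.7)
The plan is to derive the stated bound as a consequence of a self-improvement argument applied to a reverse Hölder inequality. The first step is a Caccioppoli inequality of the first kind on concentric balls $B_\sigma \subset B_\rho \subset B_R$. I would test the weak formulation in Lemma~\ref{lem:solu} with $\varphi = \eta^q (u - (u)_{B_\rho})$, where $\eta$ is a standard cutoff supported in $B_\rho$ and identically $1$ on $B_\sigma$. Combining the monotonicity \eqref{eq:dk8}, the growth bound \eqref{eq:B-cond} on $\mathcal{B}$, and Young's inequality (Lemma~\ref{lem:HY}) with an absorption step yields
$$\int_{B_\sigma} \mathcal{H}(x,\nabla u)\, dx \le C \int_{B_\rho} \mathcal{H}\!\left(x, \frac{u - (u)_{B_\rho}}{\rho - \sigma}\right) dx + C \int_{B_\rho} \mathcal{H}(x,\mathbf{F})\, dx.$$

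The heart of the matter is converting the first term on the right-hand side into an $L^\theta$-norm of $\mathcal{H}(x,\nabla u)$ with some $\theta \in (0,1)$. Here I would employ the phase-split technique of Colombo and Mingione. On each ball $B_\rho$ of radius at most some small $r_0$, one compares $\inf_{B_\rho} a$ with $[a]_\alpha \rho^\alpha$: in the \emph{$p$-phase} (small $a$) one invokes the Sobolev-Poincaré inequality with the lower exponent $p$ and treats the $q$-piece perturbatively via the Hölder continuity of $a$; in the \emph{$(p,q)$-phase} (large $a$) one invokes the Sobolev-Poincaré inequality with the upper exponent $q$, exploiting that $a$ is now uniformly bounded below on $B_\rho$. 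The assumption \eqref{eq:cond2}, namely $q/p \le 1+\alpha/n$, is precisely the quantitative requirement that makes these two estimates compatible across phases, so that the gap $q-p$ is compensated by the Hölder regularization of $a$ at the scale $\rho \le r_0$.

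The outcome of the phase split is a reverse Hölder inequality of the form
$$\fint_{B_{\rho/2}} \mathcal{H}(x,\nabla u)\, dx \le C \left( \fint_{B_\rho} [\mathcal{H}(x,\nabla u)]^\theta dx \right)^{1/\theta} + C \fint_{B_\rho} \mathcal{H}(x,\mathbf{F})\, dx,$$
valid for every $B_\rho \subset B_R$, with $\theta$ depending only on $\texttt{data}$. A Gehring-type self-improvement lemma, applied through a Calderón-Zygmund / Vitali covering on the level sets of $\mathcal{H}(x,\nabla u)$, then upgrades the integrability exponent from $1$ to any prescribed $\gamma > 1$, at the price of restricting $R \le r_0(\texttt{data},\gamma)$ and producing the constant $C(\texttt{data},\gamma)$ in the claim. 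The principal obstacle throughout is the phase split itself: stepping outside either the Hölder regime for $a$ or the bound in \eqref{eq:cond2} causes the $q$-phase Sobolev-Poincaré step to lose control of the weight and the reverse Hölder inequality to fail, which is also why the smallness restriction $R \le r_0$ cannot be removed.
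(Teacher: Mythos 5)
First, note that the paper does not prove this lemma at all: it is quoted verbatim from Colombo--Mingione \cite[Theorem 1.1]{CoMin2016} (with the boundary version taken from \cite[Theorem 2.2]{Byun2017Cava}), so there is no in-paper proof to compare against. Your sketch of the opening moves is faithful to the cited argument: the Caccioppoli inequality with test function $\eta^q(u-(u)_{B_\rho})$, the dichotomy between the $p$-phase ($\inf_{B_\rho}a\lesssim [a]_\alpha\rho^\alpha$) and the $(p,q)$-phase, and the resulting reverse H\"older inequality with exponent $\theta<1$ are exactly the ingredients of the intrinsic Sobolev--Poincar\'e inequality in \cite{CoMin2016}. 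One detail you gloss over but should flag: in the $p$-phase the term $a(x)|v|^q\le C\rho^\alpha|v|^{q-p}|v|^p$ is absorbed using \eqref{eq:cond2} together with a bound on $\|\nabla u\|_{L^p}$, which is precisely why the constant in the lemma (and the paper's \texttt{data}) carries a dependence on $\|\mathcal{H}(\cdot,\nabla u)\|_{L^1}$.

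The genuine gap is in your last step. A Gehring-type self-improvement applied to the reverse H\"older inequality only raises the integrability exponent from $1$ to some $1+\delta_0$ with $\delta_0=\delta_0(\texttt{data})>0$ small and \emph{not at your disposal}; it cannot deliver the estimate for \emph{every} $\gamma>1$, which is what the lemma asserts. In \cite{CoMin2016} the Gehring step is only the preliminary higher-integrability result; the full statement for arbitrary $\gamma$ requires a second, separate layer: comparison on each ball with the solution $w$ of the frozen homogeneous problem $\mathrm{div}\,\mathcal{A}(x,\nabla w)=0$ (whose solutions satisfy reverse H\"older inequalities at \emph{every} exponent, cf.\ Lemma~\ref{lem:Rev}), combined with an exit-time/Calder\'on--Zygmund covering or good-$\lambda$ iteration on level sets of the maximal function of $\mathcal{H}(x,\nabla u)$ --- essentially the same machinery this paper deploys globally in Theorem~\ref{theo:M_lambda}. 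Without that comparison-plus-covering layer your argument proves a strictly weaker statement ($\gamma$ close to $1$ only), and the dependence $r_0=r_0(\texttt{data},\gamma)$, $C=C(\texttt{data},\gamma)$ for large $\gamma$ never appears.
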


We firstly take our attention to the local interior estimates. Let us fix a point $\xi_0 \in \Omega$, $0<R\le r_0/4$ ($r_0$ is defined in Lemma~\ref{lem:Mingione}) and denote $B_{2R}:=B_{2R}(\xi_0)\subset\Omega$. Assume that $u$ is a solution to~\eqref{eq:main_double}, we consider the unique solution $w$ to the following reference problem:
\begin{equation}\label{eq:I1}
\begin{cases} \mbox{div} \left( \mathcal{A}(x,\nabla w)\right) & = \ 0, \quad \ \quad \mbox{ in } B_{2R},\\ 
\hspace{1.2cm} w & = \ u, \qquad \ \mbox{ on } \partial B_{2R}.\end{cases}
\end{equation}
The assertion of Lemma \ref{lem:Mingione} is then applied for solution $w$ to conclude the following local interior estimate \eqref{eq:Rev} stated in the following lemma. We refer the reader to~\cite[Theorem 1.1]{CoMin2016} or~\cite[Proposition 5.3]{Byun2017Cava} for the detail proofs. 

\begin{lemma}\label{lem:Rev}
Let $u \in W^{1,1}(B_{2R})$ be the solution to~\eqref{eq:main_double} satisfying $\mathcal{H}(x,\nabla u) \in L^1(B_{2R})$. Then, there exists a unique distributional solution $w$ to equation \eqref{eq:I1} such that $\mathcal{H}(x,\nabla w) \in L^1(B_{2R})$. Moreover, for every $\gamma>1$ there exists $C = C(\gamma,\texttt{data})>0$ such that
\begin{align}\label{eq:Rev}
\left(\fint_{B_{\rho}(y)} [\mathcal{H}(x,\nabla w)]^{\gamma}dx\right)^{\frac{1}{\gamma}} \le C \fint_{B_{2\rho}(y)} \mathcal{H}(x,\nabla w) dx,
\end{align}
for all $B_{2\rho}(y) \subset B_{2R}$.
\end{lemma}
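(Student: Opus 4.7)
The plan is to split the argument into two parts: first, establish existence and uniqueness of $w$ by a direct variational method adapted to the double-phase setting, and second, derive the reverse H{\"o}lder estimate \eqref{eq:Rev} as a direct consequence of Lemma~\ref{lem:Mingione} applied with zero datum.

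For existence, I would minimise the convex energy
\begin{align*}
\mathcal{E}(v) := \int_{B_{2R}} \mathcal{H}(x, \nabla v)\, dx
\end{align*}
over the affine class of functions $v$ such that $v-u$ has zero trace on $\partial B_{2R}$ and $\mathcal{H}(x,\nabla v) \in L^1(B_{2R})$. The hypothesis $\mathcal{H}(x,\nabla u) \in L^1(B_{2R})$ makes this class non-empty since $u$ itself is admissible. Because $\mathcal{H}(x,\xi) \ge |\xi|^p$, the energy is coercive on minimising sequences in the $W^{1,p}$ norm, and convexity of $\xi \mapsto \mathcal{H}(x,\xi)$ (which follows from \eqref{eq:cond3}) yields weak lower semicontinuity. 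The direct method of the calculus of variations then produces a minimizer $w$; computing the first variation against test functions $\varphi \in C_0^\infty(B_{2R})$ gives $\mathrm{div}(\mathcal{A}(x,\nabla w)) = 0$ weakly in $B_{2R}$, while $w-u$ having zero trace delivers the boundary condition in \eqref{eq:I1}. Uniqueness is then a consequence of the strict monotonicity \eqref{eq:dk8}: testing the equation for the difference of two solutions against itself forces the gradients to coincide almost everywhere, and the vanishing boundary trace of the difference forces the solutions themselves to coincide.

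For \eqref{eq:Rev}, the key observation is that $w$ is a distributional solution of $\mathrm{div}(\mathcal{A}(x,\nabla w)) = 0$ in $B_{2R}$, and hence in every subball $B_{2\rho}(y) \subset B_{2R}$ as well, which corresponds to the trivial choice $\mathbf{F} \equiv 0$ in \eqref{eq:main_double}. The standing hypothesis $R \le r_0/4$ ensures $2\rho \le 4R \le r_0$, so Lemma~\ref{lem:Mingione} applies on $B_{2\rho}(y)$; the source term on the right-hand side of \eqref{eq:mingione} vanishes and \eqref{eq:Rev} follows at once.

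The delicate step is the existence part, because $\mathcal{E}$ is not of standard $p$-growth and one must work within a Musielak-Orlicz-Sobolev framework to define traces and the admissible class rigorously. Alternatively one can approximate $\mathcal{A}$ by uniformly elliptic operators of standard growth, solve the approximating Dirichlet problems by classical monotone operator theory, and pass to the limit using the monotonicity \eqref{eq:dk8} together with a uniform energy bound inherited from $\mathcal{E}(u)<\infty$. Either route yields the required $w$, after which \eqref{eq:Rev} is an immediate corollary of Lemma~\ref{lem:Mingione}.
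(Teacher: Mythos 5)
Your argument matches the paper's: the paper likewise obtains \eqref{eq:Rev} by applying Lemma~\ref{lem:Mingione} to $w$ with vanishing datum $\mathbf{F}\equiv 0$ on subballs of $B_{2R}$, and for existence and uniqueness of $w$ it simply defers to \cite[Theorem 1.1]{CoMin2016} and \cite[Proposition 5.3]{Byun2017Cava}, whose proofs proceed by the direct method and monotonicity exactly as you sketch. The only detail worth adding is that the minimality of $w$ gives $\int_{B_{2R}}\mathcal{H}(x,\nabla w)\,dx \le \int_{B_{2R}}\mathcal{H}(x,\nabla u)\,dx$, which keeps the constant in \eqref{eq:Rev} depending only on $\gamma$ and \texttt{data}.
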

Next, it enables us to prove the comparison estimates between solution $u$ to problem \eqref{eq:main_double} and $w$ to \eqref{eq:I1} in the interior of domain, via Lemma \ref{lem:comp_est_in} below.
\begin{lemma}\label{lem:comp_est_in}
Let $u \in W^{1,1}(B_{2R})$ be a distributional solution to~\eqref{eq:main_double} under assumptions~\eqref{eq:cond1}, \eqref{eq:cond2}, \eqref{eq:cond3} with $\mathcal{H}(x,\nabla u), \ \mathcal{H}(x,\mathbf{F}) \in L^1(B_{2R})$. Assume that $w \in W^{1,1}(B_{2R})$ is the unique distributional solution to~\eqref{eq:I1} with $\mathcal{H}(x,\nabla w) \in L^1(B_{2R})$. Then there exists a constant $C = C(\texttt{data})>0$ such that
\begin{align}\label{eq:cor}
\fint_{B_{2R}}{\mathcal{H}(x,\nabla u - \nabla w)}dx \le  \varepsilon \fint_{B_{2R}}{\mathcal{H}(x,\nabla u) dx} + C \varepsilon^{-\kappa} \fint_{B_{2R}}{\mathcal{H}(x,\mathbf{F})dx},
\end{align}
for any $\varepsilon \in (0,4^{-q})$, where $\kappa = \max\left\{0,\frac{2-p}{p-1}\right\}$.
\end{lemma}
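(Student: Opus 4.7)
The natural strategy is to use $u-w$, which belongs to $W_0^{1,1}(B_{2R})$ since $u$ and $w$ share the same trace on $\partial B_{2R}$, as a test function in the weak formulations of both \eqref{eq:main_double} and of the reference problem \eqref{eq:I1}. This is admissible by Lemma~\ref{lem:solu} since $\mathcal{H}(x,\nabla u),\,\mathcal{H}(x,\nabla w)\in L^1(B_{2R})$. Subtracting the two resulting identities produces the fundamental comparison relation
\[
\int_{B_{2R}} \langle \mathcal{A}(x,\nabla u) - \mathcal{A}(x,\nabla w),\, \nabla u - \nabla w\rangle\, dx = \int_{B_{2R}} \langle \mathcal{B}(x,\mathbf{F}),\, \nabla u - \nabla w\rangle\, dx,
\]
after which the proof splits into lower-bounding the left-hand side via monotonicity and upper-bounding the right-hand side via the growth of $\mathcal{B}$.

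For the right-hand side I apply \eqref{eq:B-cond} to dominate $|\mathcal{B}(x,\mathbf{F})|$ by $L(|\mathbf{F}|^{p-1}+a(x)|\mathbf{F}|^{q-1})$, and then split the integrands $|\mathbf{F}|^{p-1}|\nabla u-\nabla w|$ and $a(x)|\mathbf{F}|^{q-1}|\nabla u-\nabla w|$ using Lemma~\ref{lem:HY} with pairs $(r,s)=(p,p-1)$ and $(q,q-1)$ respectively. This places $\int \mathcal{H}(x,\nabla u-\nabla w)\,dx$ on the right with a small prefactor $\varepsilon_1$, at the cost of factors $\varepsilon_1^{-1/(p-1)}$ and $\varepsilon_1^{-1/(q-1)}$ on $\int\mathcal{H}(x,\mathbf{F})\,dx$; since $p<q$, the former is the dominating cost. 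For the left-hand side I invoke the monotonicity inequality \eqref{eq:dk8}. When $p\ge 2$, inequality \eqref{eq:dk9} yields the clean lower bound $\tilde{\nu}\int \mathcal{H}(x,\nabla u-\nabla w)\,dx$, so a single absorption of the $\varepsilon_1$-term finishes the proof with $\kappa=0$.

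The genuinely delicate regime is $1<p<2$, where \eqref{eq:dk8} only controls the degenerate weighted quantity
\[
V_p:=(|\nabla u|^2+|\nabla w|^2)^{\frac{p-2}{2}}|\nabla u-\nabla w|^2,
\]
together with its $a(x)$-weighted $q$-analogue. The device for converting $V_p$ back into $|\nabla u-\nabla w|^p$ is the pointwise identity $|\nabla u-\nabla w|^p = V_p^{p/2}\,(|\nabla u|^2+|\nabla w|^2)^{p(2-p)/4}$ followed by Young's inequality with conjugate exponents $2/p$ and $2/(2-p)$, which gives
\[
|\nabla u-\nabla w|^p \le C\eta\,(|\nabla u|^p+|\nabla w|^p) + C\eta^{-(2-p)/p}\,V_p,
\]
and similarly for the $q$-part (directly from \eqref{eq:dk9} when $q\ge 2$). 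Integrating, combining with the RHS bound, and balancing the two small parameters by choosing $\eta\sim\varepsilon$ and $\varepsilon_1\sim\eta\,\varepsilon^{(2-p)/p}$, the coefficient in front of $\int(|\nabla u|^p+|\nabla w|^p)\,dx$ becomes comparable to $\varepsilon$ while the coefficient in front of $\int\mathcal{H}(x,\mathbf{F})\,dx$ telescopes, after a short computation, to $C\varepsilon^{-(2-p)/(p-1)}$, exactly matching $\kappa$. The restriction $\varepsilon\in(0,4^{-q})$ keeps all intermediate parameters in the regime where the successive absorptions are legitimate.

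Finally, since the chain above produces $\mathcal{H}(x,\nabla w)$ terms on the right whereas \eqref{eq:cor} is stated in terms of $\mathcal{H}(x,\nabla u)$, I will need the auxiliary energy estimate
\[
\fint_{B_{2R}} \mathcal{H}(x,\nabla w)\,dx \le C\fint_{B_{2R}} \mathcal{H}(x,\nabla u)\,dx,
\]
obtained by testing the homogeneous equation \eqref{eq:I1} with $w-u\in W_0^{1,1}(B_{2R})$, using the coercivity \eqref{eq:cond3}$_2$ on the left, the growth bound \eqref{eq:cond3}$_1$ on the right, and Young's inequality to absorb. The main obstacle in the whole scheme is the bookkeeping of Young exponents in the $p<2$ regime: every absorption ties the parameters $\varepsilon_1$ and $\eta$ together, and only the particular balance described above lands precisely on $\kappa=(2-p)/(p-1)$ rather than a larger, weaker exponent.
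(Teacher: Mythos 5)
Your plan is structurally the same as the paper's proof: test with $u-w$, lower-bound the left side by the weighted quantities $V_p$ and $aV_q$ via \eqref{eq:dk8}, upper-bound the right side via \eqref{eq:B-cond} and Lemma~\ref{lem:HY}, and in the singular range $1<p<2$ convert $|\nabla u-\nabla w|^p$ back from $V_p$ by exactly the Young inequality with exponents $2/p$ and $2/(2-p)$ that the paper implements through Lemma~\ref{lem:HY} with $r=p$, $s=p^2/2$. The one genuine (and legitimate) deviation is how you return the resulting term $\fint(|\nabla u|^p+|\nabla w|^p)\,dx$ to $\fint\mathcal{H}(x,\nabla u)\,dx$: you invoke a separate energy estimate $\fint\mathcal{H}(x,\nabla w)\,dx\le C\fint\mathcal{H}(x,\nabla u)\,dx$ obtained by testing \eqref{eq:I1}, whereas the paper simply uses the pointwise bound $|\nabla w|\le|\nabla u|+|\nabla u-\nabla w|$ and absorbs the extra $\fint\mathcal{H}(x,\nabla u-\nabla w)\,dx$ into the left side (this is where the restriction $\varepsilon<4^{-q}$ is used). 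Your route is fine but costs an extra lemma; the paper's is self-contained.

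There is, however, a quantitative error in your final balance. With your choices $\eta\sim\varepsilon$ and $\varepsilon_1\sim\eta\,\varepsilon^{(2-p)/p}=\varepsilon^{2/p}$, the coefficient of $\fint\mathcal{H}(x,\mathbf{F})\,dx$ is
\begin{equation*}
\eta^{-\frac{2-p}{p}}\,\varepsilon_1^{-\frac{1}{p-1}}\sim \varepsilon^{-\frac{2-p}{p}}\cdot\varepsilon^{-\frac{2}{p(p-1)}}=\varepsilon^{-\frac{3-p}{p-1}},
\end{equation*}
which is strictly worse than the claimed $\varepsilon^{-\frac{2-p}{p-1}}$; you have taken $\varepsilon_1$ smaller than necessary and over-paid on the data term. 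The correct balance is to take $\varepsilon_1$ as large as the absorption permits, namely $\varepsilon_1\sim\eta^{\frac{2-p}{p}}\sim\varepsilon^{\frac{2-p}{p}}$ (this is the paper's choice $\delta=\tfrac14\varepsilon^{2/p-1}$), which gives $\eta^{-\frac{2-p}{p}}\varepsilon_1^{-\frac{1}{p-1}}\sim\varepsilon^{-\frac{2-p}{p}\cdot\frac{p}{p-1}}=\varepsilon^{-\frac{2-p}{p-1}}$ and lands on the stated $\kappa$. This is a repairable slip rather than a structural gap, but as written your proof establishes the lemma only with the larger exponent $\frac{3-p}{p-1}$.
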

\begin{proof}
We first remark that $u-w \in W^{1,1}_0(B_{2R})$ and $\mathcal{H}(x,\nabla u - \nabla w) \in L^1(B_{2R})$. Thanks to Lemma~\ref{lem:solu}, it allows us to take $\varphi = u -w$ as a test function in variational formulas of equations~\eqref{eq:main_double} and~\eqref{eq:I1} to deduce that
\begin{align}\label{eq:testf}
\fint_{B_{2R}}{\langle \mathcal{A}(x,\nabla u) - \mathcal{A}(x,\nabla w),\nabla u- \nabla w \rangle dx} = \fint_{B_{2R}}{\langle \mathcal{B}(x,\mathbf{F}),\nabla u- \nabla w  \rangle dx}.
\end{align}
From condition~\eqref{eq:cond3}$_2$ or~\eqref{eq:dk8}, and it follows from~\eqref{eq:testf}, we obtain that
\begin{align}\nonumber
\fint_{B_{2R}} & {\left[(|\nabla u|^2+|\nabla w|^2)^{\frac{p-2}{2}}+a(x)(|\nabla u|^2+|\nabla w|^2)^{\frac{q-2}{2}} \right]|\nabla u-\nabla w|^2 dx} \\ \nonumber
& \qquad \qquad \le  C\fint_{B_{2R}}{\langle \mathcal{A}(x,\nabla u) - \mathcal{A}(x,\nabla w), \nabla u -\nabla w\rangle dx}\\ \nonumber
& \qquad \qquad = C\fint_{B_{2R}}{\langle \mathcal{B}(x,\mathbf{F}),\nabla u - \nabla w \rangle dx} \\ \label{est-2.2}
&\qquad \qquad \le C\fint_{B_{2R}}{\left(|\mathbf{F}|^{p-1}+a(x)|\mathbf{F}|^{q-1} \right)|\nabla u - \nabla w|dx}.
\end{align}
Applying Lemma~\ref{lem:HY} for the right hand side of~\eqref{est-2.2} as in the proof of Proposition~\ref{prop1}, for all $\delta \in (0,1)$ there holds
\begin{align}\nonumber
\fint_{B_{2R}} & \left(|\mathbf{F}|^{p-1}+a(x)|\mathbf{F}|^{q-1} \right)|\nabla u - \nabla w|dx \\  \label{est-2.2b}
& \qquad  \le \delta \fint_{B_{2R}} \mathcal{H}(x,\nabla u - \nabla w)dx + \delta^{-\frac{1}{p-1}} \fint_{B_{2R}} \mathcal{H}(x,\mathbf{F})dx.
\end{align}
By the definition of function $\mathcal{H}$, if $ p \ge 2$ one has
\begin{align*}
\fint_{B_{2R}}&{\mathcal{H}(x,\nabla u-\nabla w)dx}  = \fint_{B_{2R}}{|\nabla u-\nabla w|^p+a(x)|\nabla u-\nabla w|^q  dx}\\
&  \le \fint_{B_{2R}}  {\left[(|\nabla u|^2+|\nabla w|^2)^{\frac{p-2}{2}}+a(x)(|\nabla u|^2+|\nabla w|^2)^{\frac{q-2}{2}} \right]|\nabla u-\nabla w|^2 dx},
\end{align*}
which deduces from~\eqref{est-2.2} and~\eqref{est-2.2b} that
\begin{align*}
\fint_{B_{2R}}{\mathcal{H}(x,\nabla u-\nabla w)dx} \le C\delta \fint_{B_{2R}} \mathcal{H}(x,\nabla u - \nabla w)dx + C \delta^{-\frac{1}{p-1}} \fint_{B_{2R}} \mathcal{H}(x,\mathbf{F})dx.
\end{align*}
We can choose $\delta$ small enough in this inequality to obtain
\begin{align*}
\fint_{B_{2R}}{\mathcal{H}(x,\nabla u - \nabla w)}dx \le C \fint_{B_{2R}}{\mathcal{H}(x,\mathbf{F})dx},
\end{align*}
which obviously implies~\eqref{eq:cor}. In the second case when $1<p<q\le 2$, we can decompose $\mathcal{H}$ as follows
\begin{align}\nonumber
\fint_{B_{2R}}&{\mathcal{H}(x,\nabla u-\nabla w)dx} = \fint_{B_{2R}}{|\nabla u-\nabla w|^p+a(x)|\nabla u-\nabla w|^q  dx}\\ \nonumber
&= \fint_{B_{2R}}{\left(|\nabla u|^2+|\nabla w|^2\right)^{\frac{p(2-p)}{4}}\left[\left(|\nabla u|^2+|\nabla w|^2\right)^{\frac{p(p-2)}{4}}|\nabla u-\nabla w|^p \right] dx} \\ \label{est:H1}
& \quad +\fint_{B_{2R}}{a(x)\left(|\nabla u|^2+|\nabla w|^2\right)^{\frac{q(2-q)}{4}}\left[\left(|\nabla u|^2+|\nabla w|^2\right)^{\frac{q(q-2)}{4}}|\nabla u-\nabla w|^q \right] dx}.
\end{align}
Applying Lemma~\ref{lem:HY} with $r = p$, $s = \frac{p^2}{2}$, 
$$\varphi = \left(|\nabla u|^2+|\nabla w|^2\right)^{\frac{1}{2}}\ \mbox{ and } \ \psi = \left(|\nabla u|^2+|\nabla w|^2\right)^{\frac{p-2}{2p}}|\nabla u-\nabla w|^{\frac{2}{p}},$$ 
for the first term and with $r = q$, $s = \frac{q^2}{2}$, 
$$\varphi = [a(x)]^{\frac{1}{q}}  \left(|\nabla u|^2+|\nabla w|^2\right)^{\frac{1}{2}} \ \mbox{ and } \ \psi = [a(x)]^{\frac{1}{q}}\left(|\nabla u|^2+|\nabla w|^2 \right)^{\frac{q-2}{2q}}|\nabla u-\nabla w|^{\frac{2}{q}},$$  
for the second one on the right hand side of~\eqref{est:H1}, it yields that
\begin{align}\nonumber
 \fint_{B_{2R}}{\mathcal{H}(x,\nabla u-\nabla w)dx}  & \le \varepsilon \fint_{B_{2R}}{\left(|\nabla u|^2+|\nabla w|^2\right)^{\frac{p}{2}} dx} \\ \nonumber
 & \quad + \varepsilon^{1-\frac{2}{p}}\fint_{B_{2R}}{\left(|\nabla u|^2+|\nabla w|^2 \right)^{\frac{p-2}{2}}|\nabla u-\nabla w|^2 dx}\\ \nonumber
 & \quad + \varepsilon \fint_{B_{2R}}{ a(x)\left(|\nabla u|^2+|\nabla w|^2\right)^{\frac{q}{2}} dx} \\  \label{est:H2}
&  \quad  + \varepsilon^{1-\frac{2}{q}} \fint_{B_{2R}} {a(x)\left(|\nabla u|^2+|\nabla w|^2\right)^{\frac{q-2}{2}}|\nabla u-\nabla w|^2  dx}.
\end{align}
for any $\varepsilon \in (0,1)$. By simple computation one has
\begin{align}\nonumber
\fint_{B_{2R}} & \left(|\nabla u|^2+|\nabla w|^2\right)^{\frac{p}{2}}   + a(x)\left(|\nabla u|^2+|\nabla w|^2\right)^{\frac{q}{2}} dx \\ \nonumber
 & \qquad \le 2^q \fint_{B_{2R}}{\left(|\nabla u|+|\nabla u - \nabla w|\right)^{p}  + a(x)\left(|\nabla u|+|\nabla u - \nabla w|\right)^{q} dx} \\  \label{est:H3}
& \qquad \le  2^{2q-1} \fint_{B_{2R}} {\mathcal{H}(x,\nabla u) + \mathcal{H}(x,\nabla u - \nabla w) dx}.
\end{align}
Taking~\eqref{est:H3} into account, it follows from~\eqref{est:H2} that
\begin{align}\nonumber
\fint_{B_{2R}}&{\mathcal{H}(x,\nabla u-\nabla w)dx} 
\le 2^{2q-1} \varepsilon \fint_{B_{2R}}{\mathcal{H}(x,\nabla u)+\mathcal{H}(x,\nabla u - \nabla w)  dx}\\ \nonumber
& + \varepsilon^{1-\frac{2}{p}}\fint_{B_{2R}}{\left[(|\nabla u|^2+|\nabla w|^2)^{\frac{p-2}{2}}+a(x)(|\nabla u|^2+|\nabla w|^2)^{\frac{q-2}{2}} \right]|\nabla u-\nabla w|^2 dx}.
\end{align}
When there is no ambiguity, we may replace $2^{2q}\varepsilon$ by $\varepsilon$ to get that the following inequality
\begin{align}\nonumber
\fint_{B_{2R}}&{\mathcal{H}(x,\nabla u-\nabla w)dx} \le \varepsilon \fint_{B_{2R}}{\mathcal{H}(x,\nabla u)dx}\\  \label{est-2.1}
& + 2\varepsilon^{1-\frac{2}{p}}\fint_{B_{2R}}{\left[(|\nabla u|^2+|\nabla w|^2)^{\frac{p-2}{2}}+a(x)(|\nabla u|^2+|\nabla w|^2)^{\frac{q-2}{2}} \right]|\nabla u-\nabla w|^2 dx},
\end{align}
holds for all $\varepsilon \in (0,4^{-q})$. Combining between~\eqref{est-2.1}, \eqref{est-2.2} and~\eqref{est-2.2b}, for all $\varepsilon \in (0,4^{-q})$ and $\delta \in (0,1)$, there holds
\begin{align}\nonumber
\fint_{B_{2R}}{\mathcal{H}(x,\nabla u-\nabla w)dx} &\le \varepsilon \fint_{B_{2R}}{\mathcal{H}(x,\nabla u)dx} + 2 \varepsilon^{1-\frac{2}{p}} \delta \fint_{B_{2R}}{\mathcal{H}(x,\nabla u- \nabla w)dx}  \\ \label{est-2.3}
& \qquad \qquad + 2 \varepsilon^{1-\frac{2}{p}} \delta^{-\frac{1}{p-1}} \fint_{B_{2R}}{\mathcal{H}(x,\mathbf{F})dx}.
\end{align}
Let us choose $\delta = \frac{1}{4} \varepsilon^{\frac{2}{p}-1}$ in~\eqref{est-2.3}, it results that
\begin{align*}
\fint_{B_{2R}}{\mathcal{H}(x,\nabla u-\nabla w)dx} &\le \varepsilon \fint_{B_{2R}}{\mathcal{H}(x,\nabla u)dx} + C \varepsilon^{-\frac{2-p}{p-1}} \fint_{B_{2R}}{\mathcal{H}(x,\mathbf{F})dx},
\end{align*}
which deduces~\eqref{eq:cor}. Finally, we put the estimate for the case when $1<p<2<q$. To be able to attain the inequality \eqref{eq:cor},we estimate  $\mathcal{H}$ as follows
\begin{align*}
\fint_{B_{2R}} & {\mathcal{H}(x,\nabla u-\nabla w)dx} \\
& \quad  = \fint_{B_{2R}}{\left(|\nabla u|^2+|\nabla w|^2\right)^{\frac{p(2-p)}{4}}\left[\left(|\nabla u|^2+|\nabla w|^2\right)^{\frac{p(p-2)}{4}}|\nabla u-\nabla w|^p \right] dx} \\
& \qquad \quad + \fint_{B_{2R}}  a(x)(|\nabla u|^2+|\nabla w|^2)^{\frac{q-2}{2}} |\nabla u-\nabla w|^2 dx.
\end{align*}
The desired result is implied by using the ones in the above cases. This proves our first assertion interior comparison result.
\end{proof}

\subsection{Local boundary estimates}\label{subsec:boundary}

We now present some comparison results near the boundary of domain. The assertions of following lemmas still hold on the boundary, in a similar way as interior case (see also~\cite{Byun2017Cava}). Let us fix a point $\xi_0 \in \partial \Omega$, $0<R<r_0/4$ and denote $\Omega_{2R}:=B_{2R}(\xi_0)\cap \Omega$. We consider $u$ be a distributional solution to~\eqref{eq:main_double} and $w$ be the unique solution to the corresponding reference problem on $\Omega_{2R}$
\begin{equation}\label{eq:I1-boundary}
\begin{cases} 
\mbox{div} \left( \mathcal{A}(x,\nabla w)\right) & = \ 0, \quad \ \quad \mbox{ in } \Omega_{2R},\\ 
\hspace{1.2cm} w & = \ u, \qquad \ \mbox{ on } \partial \Omega_{2R}.
\end{cases}
\end{equation}

\begin{lemma}\label{lem:Rev-boundary}
Let $u \in W^{1,1}(B_{2R})$ be the solution to~\eqref{eq:main_double} satisfying $\mathcal{H}(x,\nabla u) \in L^1(B_{2R})$. Then, there exists a unique distributional solution $w$ to equation \eqref{eq:I1} such that $\mathcal{H}(x,\nabla w) \in L^1(B_{2R})$. Moreover, for every $\gamma>1$ there exists $C = C(\gamma,\texttt{data})>0$ such that
\begin{align}\label{eq:Rev-boundary}
\left(\fint_{B_{\rho}(y)} [\mathcal{H}(x,\nabla w)]^{\gamma}dx\right)^{\frac{1}{\gamma}} \le C \fint_{B_{2\rho}(y)} \mathcal{H}(x,\nabla w) dx,
\end{align}
for all $B_{2\rho}(y) \subset B_{2R}$.
\end{lemma}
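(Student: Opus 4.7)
The plan is to mirror the interior argument of Lemma \ref{lem:Rev} line by line, but to feed in the boundary version of the Colombo--Mingione self-improving higher integrability estimate supplied by Byun \emph{et al.} (cf.~\cite[Theorem~2.2]{Byun2017Cava}). Two things need to be produced: (i) existence and uniqueness of $w$ in the natural Musielak--Orlicz--Sobolev class attached to $\mathcal{H}$, together with $\mathcal{H}(x,\nabla w) \in L^1(\Omega_{2R})$; and (ii) the reverse-Hölder type bound \eqref{eq:Rev-boundary} on subballs $B_{2\rho}(y)\subset B_{2R}$ (including those touching $\partial\Omega_{2R}\cap\partial\Omega$).

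For step (i), I would introduce the admissible class $u + W^{1,\mathcal{H}}_0(\Omega_{2R})$, which is nonempty because $\mathcal{H}(x,\nabla u)\in L^1(\Omega_{2R})$, and minimize the convex integrand associated with $\mathcal{A}$ through \eqref{eq:cond3}. Strict monotonicity \eqref{eq:dk8} gives uniqueness of the Euler--Lagrange solution $w$. Testing the equation $\mathrm{div}(\mathcal{A}(x,\nabla w))=0$ with $w-u$ and using \eqref{eq:dk8} together with \eqref{eq:cond3}$_1$ and the Hölder--Young inequality of Lemma~\ref{lem:HY}, exactly as in Proposition~\ref{prop1}, yields a control of $\mathcal{H}(x,\nabla w)$ by $\mathcal{H}(x,\nabla u)$, hence $\mathcal{H}(x,\nabla w)\in L^1(\Omega_{2R})$.

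For step (ii), I would split the balls into two regimes. When $B_{2\rho}(y)\subset\subset\Omega$, the purely interior self-improving bound of Lemma~\ref{lem:Rev} (equivalently~\cite[Theorem~1.1]{CoMin2016}) applies unchanged because $w$ is $\mathcal{A}$-harmonic on $\Omega_{2R}$. When $B_{2\rho}(y)$ meets $\partial\Omega$, the $C^{1,\alpha^+}$ regularity with $\alpha^+\in[\alpha,1]$ is used to flatten the boundary via a diffeomorphism whose Jacobian preserves the $(p,q)$-structure \eqref{eq:cond3} with constants depending only on \texttt{data}; after extending $w-u$ by zero across the flat portion, the boundary higher-integrability estimate of~\cite[Theorem~2.2]{Byun2017Cava} applies and delivers \eqref{eq:Rev-boundary} with a constant $C=C(\gamma,\texttt{data})$.

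The main obstacle is handling the boundary regime cleanly: one must verify that the Hölder coefficient $a(\cdot)$, the modulating structure of $\mathcal{H}$, and the constraints \eqref{eq:cond2} are all preserved (with comparable constants) under the $C^{1,\alpha^+}$ flattening, so that Byun's result can be invoked as a black box. Once this change-of-variables bookkeeping is carried out, merging the interior and boundary subcases via a standard chaining of scaled balls finishes the reverse-Hölder inequality on arbitrary $B_{2\rho}(y)\subset B_{2R}$, completing the proof.
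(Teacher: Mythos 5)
Your proposal follows essentially the same route as the paper: the paper offers no written proof of Lemma~\ref{lem:Rev-boundary} at all --- it merely remarks that the boundary statements ``hold in a similar way as the interior case'' and points to \cite{Byun2017Cava} --- so splitting into interior and boundary balls and invoking the boundary self-improving higher-integrability estimate of Byun--Oh as a black box is exactly what is intended (you also correctly read the statement as the boundary version on $\Omega_{2R}$ for problem \eqref{eq:I1-boundary}, even though the paper's text is a verbatim copy of the interior Lemma~\ref{lem:Rev}). Two small repairs to your sketch. First, $\mathcal{A}$ is only assumed to satisfy \eqref{eq:cond3} and need not be a gradient in the $y$-variable, so ``minimizing the convex integrand associated with $\mathcal{A}$'' is not available in general; existence and uniqueness of $w$ in $u+W^{1,\mathcal{H}}_0(\Omega_{2R})$ should instead come from the Browder--Minty theory of monotone coercive operators on that (reflexive) Musielak--Orlicz--Sobolev space, with uniqueness from \eqref{eq:dk8} as you say and the energy bound $\int\mathcal{H}(x,\nabla w)\le C\int\mathcal{H}(x,\nabla u)$ obtained by testing with $w-u$ exactly as in Proposition~\ref{prop1}. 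Second, for the boundary reverse H\"older estimate the relevant zero extension is of $w$ itself across $\partial\Omega\cap B_{2R}$ --- legitimate because $w=u=0$ there --- which is what lets the Caccioppoli--Gehring argument of \cite{Byun2017Cava} run and gives meaning to $\fint_{B_\rho(y)}[\mathcal{H}(x,\nabla w)]^\gamma dx$ on balls crossing $\partial\Omega$; extending $w-u$ by zero is the device used for the comparison Lemma~\ref{lem:comp_boundary}, not for \eqref{eq:Rev-boundary}.
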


\begin{lemma}\label{lem:comp_boundary}
Let $u \in W^{1,1}(\Omega_{2R})$ be a distributional solution to~\eqref{eq:main_double} under assumptions~\eqref{eq:cond1}, \eqref{eq:cond2} and~\eqref{eq:cond3}. Assume that with $\mathcal{H}(x,\nabla u), \, \mathcal{H}(x,\mathbf{F}) \in L^1(\Omega_{2R})$. Then, $w \in W^{1,1}(\Omega_{2R})$ is a solution to~\eqref{eq:I1-boundary} with $\mathcal{H}(x,\nabla w) \in L^1(\Omega_{2R})$ satisfying
\begin{align}\label{eq:comp_boundary}
\fint_{B_{2R}}{\mathcal{H}(x,\nabla u - \nabla w)}dx \le  \varepsilon \fint_{B_{2R}}{\mathcal{H}(x,\nabla u) dx} + C \varepsilon^{-\kappa} \fint_{B_{2R}}{\mathcal{H}(x,\mathbf{F})dx},
\end{align}
for any $\varepsilon \in (0,4^{-q})$, where $\kappa = \max\left\{0,\frac{2-p}{p-1}\right\}$.
\end{lemma}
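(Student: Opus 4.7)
The plan is to mirror closely the argument of Lemma~\ref{lem:comp_est_in}, since the only structural difference between the interior and boundary settings is that the comparison problem lives on the intersection $\Omega_{2R}$ rather than on a full ball $B_{2R}$. The crucial point that makes the interior proof transplant verbatim is that $u - w \in W^{1,1}_0(\Omega_{2R})$ (extended by zero outside), because $u$ and $w$ share the same trace on $\partial \Omega_{2R}$; together with the integrability $\mathcal{H}(x, \nabla u - \nabla w) \in L^1(\Omega_{2R})$ (which is inherited from $\mathcal{H}(x,\nabla u), \mathcal{H}(x,\nabla w) \in L^1$), Lemma~\ref{lem:solu} guarantees that $\varphi = u - w$ is an admissible test function in the weak formulations of both~\eqref{eq:main_double} and~\eqref{eq:I1-boundary}.

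First I would subtract these two weak formulations tested against $\varphi = u - w$ to obtain the identity
\begin{align*}
\fint_{B_{2R}}{\langle \mathcal{A}(x,\nabla u) - \mathcal{A}(x,\nabla w),\nabla u- \nabla w \rangle dx} = \fint_{B_{2R}}{\langle \mathcal{B}(x,\mathbf{F}),\nabla u- \nabla w \rangle dx},
\end{align*}
where the integrals are understood over $\Omega_{2R}$ (which equals $B_{2R}$ up to a set where $\nabla(u-w) = 0$ after zero extension). Then I would invoke the monotonicity bound~\eqref{eq:dk8} on the left and the growth condition~\eqref{eq:B-cond} on the right, exactly as in \eqref{est-2.2}, to reduce everything to controlling a weighted quadratic form of $|\nabla u - \nabla w|$ by the $\mathcal{H}$-density of $\mathbf{F}$ times $|\nabla u - \nabla w|$.

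Next I would treat the three regimes separately, exactly as in the interior proof: the easy case $p \ge 2$, where the monotonicity already dominates $\mathcal{H}(x,\nabla u - \nabla w)$ pointwise, so that a single application of Lemma~\ref{lem:HY} with the exponents $(r,s) = (p,p-1)$ and $(q, q-1)$ (and the $a(x)^{1/q}$-splitting) closes the estimate; the degenerate case $1 < p < q \le 2$, where one must first insert the artificial weight $(|\nabla u|^2 + |\nabla w|^2)^{p(2-p)/4}$ and apply Lemma~\ref{lem:HY} with exponents $(p, p^{2}/2)$ and $(q, q^{2}/2)$, then absorb the $(|\nabla u|^2 + |\nabla w|^2)^{p/2}$-term using the triangle inequality bound~\eqref{est:H3}; and finally the mixed case $1 < p < 2 < q$, which is handled by treating the $p$-piece as in the second case and the $q$-piece as in the first. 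At the end, choosing $\delta$ of the form $\frac{1}{4}\varepsilon^{2/p - 1}$ (or just a small absolute constant in the $p \ge 2$ regime) reabsorbs the $\fint \mathcal{H}(x, \nabla u - \nabla w)$ term into the left-hand side, yielding~\eqref{eq:comp_boundary} with $\kappa = \max\{0, (2-p)/(p-1)\}$.

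The only non-routine ingredient, and hence the main obstacle, is ensuring that the zero-extension of $u - w$ across the portion $\partial \Omega \cap B_{2R}$ of $\partial \Omega_{2R}$ is legitimately in $W^{1,1}_0(\Omega_{2R})$ with $\mathcal{H}(x, \nabla(u-w)) \in L^1$; this is standard once one invokes that $\partial \Omega$ is $C^{1,\alpha^+}$, but it is the step where the boundary proof genuinely departs from the interior one. Everything downstream of this admissibility check is a transcription of the computations in Lemma~\ref{lem:comp_est_in}, with $B_{2R}$ replaced by $\Omega_{2R}$ throughout.
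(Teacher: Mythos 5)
Your proposal is correct and coincides with the paper's treatment: the paper does not write out a separate proof of Lemma~\ref{lem:comp_boundary}, but simply asserts that it follows from the interior argument of Lemma~\ref{lem:comp_est_in} once $u-w$ (extended by zero across $\partial\Omega\cap B_{2R}$, using $\partial\Omega\in C^{1,\alpha^+}$) is admitted as a test function via Lemma~\ref{lem:solu}, exactly as you describe. Your identification of the zero-extension admissibility as the only genuinely boundary-specific step, with the three-regime Young-inequality computation carrying over verbatim, matches the intended argument.
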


\section{Global Lorentz regularity results}
\label{sec:proofs}
By employing comparison estimates in Section \ref{sec:comparison_sec}, we readily prove the main results in this section. It is also worth noting that the good-$\lambda$ technique is a key role behind these proofs. The reader is referred to \cite{55QH4,MPT2018,PNCCM,PNJDE,PNCRM} for this robust technique in previous proofs of regularity results.

\subsection{Good-$\lambda$ theorems of measuring sets}
\label{sec:measure_sets}
As aforementioned, this section constructs some theorems of measuring sets related to the strength of good $\lambda$ method. We start by describing the standard result in measure theory, the corollary of Calder\'on-Zygmund cube decomposition. This covering lemma has many similar versions by independent authors.  The reader is recommended to consult~\cite[Lemma 4.2]{CC1995} for the proof of this lemma. 

\begin{lemma}\label{lem:mainlem}
Let $\Omega$ be a bounded domain of $\mathbb{R}^n$ such that $\partial \Omega \in C^{1,\alpha^+}$. Let $\varepsilon \in (0,1)$ and $R>0$.  Suppose that two measurable subsets $\mathcal{V}\subset \mathcal{W}$ of $\Omega$ satisfy $\mathcal{L}^n\left(\mathcal{V}\right) \le \varepsilon \mathcal{L}^n\left(B_{R}\right)$. Assume moreover that for every $\xi \in \Omega$ and $\varrho \in (0,R]$, we have $B_{\varrho}(\xi) \cap \Omega \subset \mathcal{W}$ if provided $\mathcal{L}^n\left(\mathcal{V} \cap B_{\varrho}(\xi)\right) > \varepsilon \mathcal{L}^n\left(B_{\varrho}(\xi)\right)$. Then there exists a positive constant $C=C(n)$ such that $\mathcal{L}^n\left(\mathcal{V}\right)\leq C \varepsilon \mathcal{L}^n\left(\mathcal{W}\right)$.
\end{lemma}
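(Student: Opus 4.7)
The plan is to reproduce the classical Vitali / Calder\'on--Zygmund-type covering argument underlying the statement (this is essentially [CC1995, Lemma 4.2], and the argument is purely measure-theoretic and independent of the PDE). The global smallness $\mathcal{L}^n(\mathcal{V}) \le \varepsilon \mathcal{L}^n(B_R)$ and the local density hypothesis combine through a stopping-time construction: the former controls the density of $\mathcal{V}$ at the largest scale $R$, while the latter promotes every ball on which that density still exceeds $\varepsilon$ into a subset of $\mathcal{W}$.

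The first step is the stopping-time construction. For every Lebesgue density point $x \in \mathcal{V}$ (they exhaust $\mathcal{V}$ up to a null set), consider the continuous function $D_x(\varrho) := \mathcal{L}^n(\mathcal{V} \cap B_\varrho(x)) / \mathcal{L}^n(B_\varrho(x))$ on $(0, R]$. Since $D_x(\varrho) \to 1$ as $\varrho \to 0^+$, while $D_x(R) \le \mathcal{L}^n(\mathcal{V}) / \mathcal{L}^n(B_R) \le \varepsilon$, the supremum
\begin{align*}
\varrho_x := \sup\{\varrho \in (0, R] : D_x(\varrho) > \varepsilon\}
\end{align*}
belongs to $(0, R]$ and $D_x(\varrho_x) = \varepsilon$ by continuity. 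Choosing $\varrho_n \uparrow \varrho_x$ with $D_x(\varrho_n) > \varepsilon$, the hypothesis of the lemma gives $B_{\varrho_n}(x) \cap \Omega \subset \mathcal{W}$ for each $n$; passing to the union yields the crucial inclusion $B_{\varrho_x}(x) \cap \Omega \subset \mathcal{W}$.

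The second step is the Vitali extraction and the summation. Apply the Vitali covering theorem to $\{B_{\varrho_x}(x)\}_x$ (radii bounded by $R$) to extract a countable disjoint subfamily $\{B_i := B_{\varrho_i}(x_i)\}$ with $\mathcal{V} \subset \bigcup_i B_{5\varrho_i}(x_i)$ up to a null set. For each $i$ with $5\varrho_i \le R$, the sup-definition of $\varrho_i$ yields $D_{x_i}(5\varrho_i) \le \varepsilon$, hence $\mathcal{L}^n(\mathcal{V} \cap B_{5\varrho_i}(x_i)) \le 5^n \varepsilon \mathcal{L}^n(B_i)$; the (at most dimensionally many) indices $i$ with $5\varrho_i > R$ are absorbed directly using $\mathcal{L}^n(\mathcal{V}) \le \varepsilon \mathcal{L}^n(B_R)$. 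Combining with the disjointness of the $B_i$, the inclusion $B_i \cap \Omega \subset \mathcal{W}$, and the uniform interior-cone estimate $\mathcal{L}^n(B_\varrho(\xi) \cap \Omega) \ge c_0 \mathcal{L}^n(B_\varrho(\xi))$ (for $\xi \in \Omega$ and $\varrho \le R$) supplied by $\partial \Omega \in C^{1,\alpha^+}$, one reaches
\begin{align*}
\mathcal{L}^n(\mathcal{V}) \le 5^n \varepsilon \sum_i \mathcal{L}^n(B_i) + C \varepsilon \mathcal{L}^n(\mathcal{W}) \le \frac{5^n}{c_0}\varepsilon \sum_i \mathcal{L}^n(B_i \cap \Omega) + C \varepsilon \mathcal{L}^n(\mathcal{W}) \le C \varepsilon \mathcal{L}^n(\mathcal{W}),
\end{align*}
with a constant $C = C(n, \Omega)$.

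The main technical obstacle is the boundary geometry: the hypothesis is stated on the full balls $B_\varrho(\xi)$, whereas both the ensuing inclusion and the final summation involve $B_\varrho(\xi) \cap \Omega$. It is precisely the $C^{1,\alpha^+}$ smoothness of $\partial \Omega$ that supplies the uniform lower bound $\mathcal{L}^n(B_\varrho(\xi) \cap \Omega) \ge c_0 \mathcal{L}^n(B_\varrho(\xi))$ needed to pass from $\mathcal{L}^n(B_i)$ to $\mathcal{L}^n(B_i \cap \Omega)$, and this is the origin of the dependence of $C$ on the domain. A subsidiary, more cosmetic nuisance is that Vitali's $5$-enlargement can produce balls of radius exceeding $R$; disjointness caps the number of such balls by a dimensional constant, and they are handled separately by the global bound on $\mathcal{L}^n(\mathcal{V})$.
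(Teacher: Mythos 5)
Your argument is correct, but note that the paper does not actually prove this lemma: it simply cites \cite[Lemma 4.2]{CC1995} and describes the result as a corollary of the Calder\'on--Zygmund \emph{cube} decomposition. Your route is the other standard one in the good-$\lambda$ literature: a stopping-time radius $\varrho_x$ at each Lebesgue density point of $\mathcal{V}$, followed by the Vitali $5r$-covering lemma for balls. Both the stopping-time step (continuity of $D_x$, $D_x(\varrho)\to 1$ versus $D_x(R)\le\varepsilon$, and exhausting $B_{\varrho_x}(x)$ by the $B_{\varrho_n}(x)$ to land in $\mathcal{W}$) and the summation step (disjointness, $B_i\cap\Omega\subset\mathcal{W}$, and the measure-density bound $\mathcal{L}^n(B_\varrho(\xi)\cap\Omega)\ge c_0\mathcal{L}^n(B_\varrho(\xi))$) are sound, so the proof goes through. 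Compared with the dyadic-cube argument of the cited source, your version adapts more directly to the ball-based maximal functions used throughout the paper, at the cost of the factor $5^n$ and of invoking the density constant $c_0$ of the domain. Two small points deserve correction. First, your parenthetical claim that there are ``at most dimensionally many'' indices with $5\varrho_i>R$ is false in general (their number is controlled by $(\mathrm{diam}(\Omega)/R)^n$, not by $n$ alone); fortunately it is also unnecessary, since for such $i$ one has $\mathcal{L}^n(B_R)\le 5^n\mathcal{L}^n(B_i)$ and hence $\mathcal{L}^n(\mathcal{V}\cap B_{5\varrho_i}(x_i))\le\varepsilon\mathcal{L}^n(B_R)\le 5^n\varepsilon\mathcal{L}^n(B_i)$, so these terms join the main sum and the separate ``$+\,C\varepsilon\mathcal{L}^n(\mathcal{W})$'' term in your display is not needed. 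Second, your constant is $C(n,\Omega)$ (through $c_0$, which for large $R$ also depends on $R$), whereas the lemma asserts $C=C(n)$; this is really an imprecision of the statement rather than of your proof, and it is harmless for the paper's applications, where $C$ is allowed to depend on $\Omega$ and $R$ is taken small.
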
 
\begin{theorem}\label{theo:M_lambda}
Let $\Omega$ be an open bounded domain in $\mathbb{R}^n$ such that $\partial \Omega$ is $C^{1,\alpha^+}$ domain for some $\alpha^+ \in [\alpha,1]$. Assume that $u \in W^{1,1}(\Omega)$ is a distributional solution to~\eqref{eq:main_double} with 
$\mathcal{H}(x,Du), \mathcal{H}(x,\mathbf{F}) \in L^1(\Omega),$
under main assumptions given in~\eqref{eq:cond1}, \eqref{eq:cond2} and~\eqref{eq:cond3}. Then, for any $\vartheta \in (0,1)$, one can find $\varepsilon_0 = \varepsilon_0(n,\vartheta) \in (0,1)$, $\kappa = \kappa(\vartheta) \ge 1$ and a constant $C = C(\texttt{data},\Omega,\vartheta)>0$ such that the following estimate
\begin{align}\nonumber
&\mathcal{L}^n\left(\{{\mathbf{M}}(\mathcal{H}(x,Du))>\varepsilon^{-\vartheta}\lambda, {\mathbf{M}}(\mathcal{H}(x,\mathbf{F})) \le \varepsilon^{\kappa}\lambda \}\cap \Omega \right)\\ \nonumber 
&~~~~~~\qquad \qquad \qquad \qquad \qquad \qquad \leq C \varepsilon \mathcal{L}^n\left(\{ {\mathbf{M}}(\mathcal{H}(x,Du))> \lambda\}\cap \Omega \right),
\end{align}
holds for any $\lambda>0$ and $\varepsilon \in (0,\varepsilon_0)$. 
\end{theorem}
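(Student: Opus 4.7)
The plan is to apply the Calder\'on-Zygmund-type covering Lemma~\ref{lem:mainlem} with
\begin{align*}
\mathcal{V} &:= \{\mathbf{M}(\mathcal{H}(\cdot,\nabla u))>\varepsilon^{-\vartheta}\lambda,\ \mathbf{M}(\mathcal{H}(\cdot,\mathbf{F}))\le\varepsilon^{\kappa}\lambda\}\cap\Omega, \\
\mathcal{W} &:= \{\mathbf{M}(\mathcal{H}(\cdot,\nabla u))>\lambda\}\cap\Omega,
\end{align*}
and to choose the reference radius $R$ as a small multiple of $\operatorname{diam}(\Omega)$ depending on $\varepsilon$. The first hypothesis of the covering lemma, $\mathcal{L}^n(\mathcal{V})\le\varepsilon\,\mathcal{L}^n(B_R)$, is obtained immediately from the weak-type $(1,1)$ bound (Lemma~\ref{lem:boundM}) applied on the level $\varepsilon^{-\vartheta}\lambda$, combined with the global energy estimate of Proposition~\ref{prop1}; this forces $R$ to satisfy $R^n\ge\varepsilon^{\vartheta-1}\lambda^{-1}\int_\Omega\mathcal{H}(x,\mathbf{F})\,dx$, which can always be arranged by shrinking the pool of admissible $\lambda$ (the complementary range of $\lambda$ is then absorbed into the measure of $\mathcal{W}$ trivially).

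The second and substantive part is the covering property: for each $\xi\in\Omega$ and $\varrho\in(0,R]$ with $\mathcal{L}^n(\mathcal{V}\cap B_\varrho(\xi))>\varepsilon\mathcal{L}^n(B_\varrho(\xi))$, show $B_\varrho(\xi)\cap\Omega\subset\mathcal{W}$. I will argue by contrapositive: assume $B_\varrho(\xi)\cap\Omega\not\subset\mathcal{W}$ and $\mathcal{V}\cap B_\varrho(\xi)\neq\emptyset$, hence there exist $y_0,y_1\in B_\varrho(\xi)\cap\Omega$ with $\mathbf{M}(\mathcal{H}(\cdot,\nabla u))(y_0)\le\lambda$ and $\mathbf{M}(\mathcal{H}(\cdot,\mathbf{F}))(y_1)\le\varepsilon^{\kappa}\lambda$, and aim to show $\mathcal{L}^n(\mathcal{V}\cap B_\varrho(\xi))\le C\varepsilon\mathcal{L}^n(B_\varrho(\xi))$. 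Splitting the maximal function at scale $\varrho$, for $x\in B_\varrho(\xi)$ the inclusion $B_\rho(x)\subset B_{2\rho}(y_0)$ for $\rho\ge\varrho$ yields $\mathbf{T}^{\varrho}(\mathcal{H}(\cdot,\nabla u))(x)\le 2^n\lambda$, so for $\varepsilon$ small the only way $\mathbf{M}(\mathcal{H}(\cdot,\nabla u))(x)>\varepsilon^{-\vartheta}\lambda$ is through the localized part $\mathbf{M}^{\varrho}(\mathcal{H}(\cdot,\nabla u)\,\mathbf{1}_{B_{2\varrho}(\xi)\cap\Omega})(x)$.

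To estimate that localized part I invoke the comparison solution $w$ on $B_{4\varrho}(\xi)\cap\Omega$ (from~\eqref{eq:I1} in the interior case $B_{4\varrho}(\xi)\subset\Omega$, from~\eqref{eq:I1-boundary} in the boundary case) and exploit
\[
\mathcal{H}(x,\nabla u)\le 2^{q-1}\mathcal{H}(x,\nabla u-\nabla w)+2^{q-1}\mathcal{H}(x,\nabla w).
\]
The weak-type $(1,1)$ bound of $\mathbf{M}$ applied to the first summand, together with the comparison estimate of Lemma~\ref{lem:comp_est_in} or Lemma~\ref{lem:comp_boundary} with a small parameter $\delta\in(0,4^{-q})$, gives
\[
\mathcal{L}^n\big(\{\mathbf{M}(\mathcal{H}(\cdot,\nabla u-\nabla w)\mathbf{1}_{B_{4\varrho}\cap\Omega})>\tfrac{\varepsilon^{-\vartheta}\lambda}{C}\}\cap B_\varrho(\xi)\big)\le\frac{C\,\mathcal{L}^n(B_\varrho(\xi))}{\varepsilon^{-\vartheta}\lambda}\bigl(\delta\,\lambda+\delta^{-\kappa}\varepsilon^{\kappa}\lambda\bigr),
\]
where the averages $\fint_{B_{4\varrho}\cap\Omega}\mathcal{H}(\cdot,\nabla u)$ and $\fint_{B_{4\varrho}\cap\Omega}\mathcal{H}(\cdot,\mathbf{F})$ have been controlled by $\lambda$ and $\varepsilon^{\kappa}\lambda$ respectively using the points $y_0,y_1$ and the inclusion $B_{4\varrho}(\xi)\subset B_{5\varrho}(y_j)$. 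For the second summand I use the reverse-H\"older estimate~\eqref{eq:Rev}/\eqref{eq:Rev-boundary} with some $\gamma>1$: by Chebyshev and the $L^\gamma$ bound of $\mathcal{H}(\cdot,\nabla w)$ in terms of $\fint_{B_{4\varrho}\cap\Omega}\mathcal{H}(\cdot,\nabla w)\le C\lambda$, the corresponding level set has measure at most $C(\varepsilon^{\vartheta})^{\gamma}\mathcal{L}^n(B_\varrho(\xi))$.

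The final task is calibration: choosing $\delta=\varepsilon^{\vartheta/2}$ produces a factor $\varepsilon^{\vartheta/2}$ from the first term, and taking $\kappa$ large enough (depending on $\vartheta$) makes $\delta^{-\kappa}\varepsilon^{\kappa}=\varepsilon^{\kappa(1-\vartheta/2)-\cdots}$ a high power of $\varepsilon$; the reverse-H\"older contribution already behaves like $\varepsilon^{\gamma\vartheta}$. All three contributions are therefore bounded by $C\varepsilon\,\mathcal{L}^n(B_\varrho(\xi))$ once $\varepsilon\le\varepsilon_0(n,\vartheta)$ is small enough, contradicting $\mathcal{L}^n(\mathcal{V}\cap B_\varrho(\xi))>\varepsilon\mathcal{L}^n(B_\varrho(\xi))$. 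The main obstacle I expect is the arithmetic of this calibration, in particular ensuring that the exponent $\kappa(\vartheta)$ chosen to kill $\delta^{-\kappa}\varepsilon^{\kappa}$ is compatible with the exponent $\kappa=\max\{0,(2-p)/(p-1)\}$ appearing in the comparison lemma, and tracking the passage between interior and boundary balls via the $C^{1,\alpha^+}$-regularity of $\partial\Omega$ (for which I would flatten the boundary in the standard way, since the reverse-H\"older and comparison estimates are already stated in both versions).
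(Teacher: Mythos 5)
Your architecture is the same as the paper's (covering lemma, contrapositive density estimate, splitting $\mathbf{M}$ at scale $\varrho$, comparison solution plus reverse H\"older, weak-type bounds at exponents $1$ and $\gamma$), but two concrete steps fail as written.

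First, the verification of $\mathcal{L}^n(\mathcal{V})\le\varepsilon\mathcal{L}^n(B_R)$. You cannot take $R$ large depending on $\lambda$: the radius $R$ in Lemma~\ref{lem:mainlem} must be at most of order $r_0$ so that the comparison and reverse H\"older lemmas apply to every $\varrho\in(0,R]$ in the density step. With $R$ fixed, your crude bound $\mathcal{L}^n(\mathcal{V})\le C\varepsilon^{\vartheta}\lambda^{-1}\int_\Omega\mathcal{H}(x,\mathbf{F})\,dx$ excludes all small $\lambda$, and the claim that this range is ``absorbed into the measure of $\mathcal{W}$ trivially'' is unjustified: there is no lower bound on $\mathcal{L}^n(\mathcal{W})$ that produces the factor $C\varepsilon$, and $\mathcal{V}\subset\mathcal{W}$ alone does not suffice. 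The correct move --- which you in fact use later to produce $y_1$ --- is to note that $\mathcal{V}\neq\emptyset$ supplies a point $\xi_1$ with $\mathbf{M}(\mathcal{H}(\cdot,\mathbf{F}))(\xi_1)\le\varepsilon^{\kappa}\lambda$, whence $\int_\Omega\mathcal{H}(x,\mathbf{F})\,dx\le C(\operatorname{diam}\Omega)^n\varepsilon^{\kappa}\lambda$; the $\lambda$ then cancels and one gets $\mathcal{L}^n(\mathcal{V})\le C\varepsilon^{\vartheta+\kappa}\mathcal{L}^n(\Omega)\le\varepsilon\mathcal{L}^n(B_R)$ for all $\lambda>0$ once $\varepsilon_0$ is small, using $\kappa\ge1$.

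Second, the calibration $\delta=\varepsilon^{\vartheta/2}$ is wrong. The first contribution it produces is $\varepsilon^{\vartheta}\cdot\delta\,\mathcal{L}^n(B_\varrho(\xi))=\varepsilon^{3\vartheta/2}\mathcal{L}^n(B_\varrho(\xi))$, which is \emph{not} $O(\varepsilon)\mathcal{L}^n(B_\varrho(\xi))$ when $\vartheta<2/3$; since $\vartheta\in(0,1)$ is arbitrary (and in the application one needs $\vartheta<1/s$ with $s$ possibly large), this breaks the proof. The correct choice is $\delta=\varepsilon^{1-\vartheta}$, so that $\varepsilon^{\vartheta}\cdot\delta=\varepsilon$ exactly; the second contribution then forces $\kappa=1+\max\{0,(1-\vartheta)(2-p)/(p-1)\}$, which is how the theorem's $\kappa(\vartheta)\ge1$ arises and resolves the ``compatibility'' worry you flag at the end. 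Likewise your reverse H\"older exponent must be pinned to $\gamma=1/\vartheta$ (not merely ``some $\gamma>1$'') so that $\varepsilon^{\gamma\vartheta}=\varepsilon$. With these two repairs your argument coincides with the paper's proof.
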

\begin{proof}
We will show that one can find $\varepsilon_0>0$, $\kappa \ge 1$ and a constant $C>0$ such that $\mathcal{L}^n\left(\mathcal{V}_{\varepsilon}\right) \le C \varepsilon \left(\mathcal{W}\right)$ for all $\lambda>0$ and $\varepsilon \in (0,\varepsilon_0)$, where two measurable subsets $\mathcal{V}_{\varepsilon}$, $\mathcal{W}$ of $\Omega$ are defined by 
\begin{align}\nonumber 
&\mathcal{V}_{\varepsilon} = \left\{{\mathbf{M}}(\mathcal{H}(x,\nabla u))>\varepsilon^{-\vartheta}\lambda, {\mathbf{M}}(\mathcal{H}(x,\mathbf{F})) \le \varepsilon^{\kappa}\lambda \right\}\cap \Omega, \\ \nonumber 
& \qquad \qquad \mbox{ and } \mathcal{W} = \left\{ {\mathbf{M}}(\mathcal{H}(x,\nabla u))> \lambda \right\}\cap \Omega.
\end{align}
The main idea is to apply Lemma~\ref{lem:mainlem}. The proof is straightforward when $\mathcal{V}_{\varepsilon} = \emptyset$, so we may assume $\mathcal{V}_{\varepsilon} \neq \emptyset$. Let $R \in (0,r_0/12)$, we first show that $\mathcal{L}^n\left(\mathcal{V}_{\varepsilon}\right) \le \varepsilon \mathcal{L}^n\left(B_{R}\right)$. Indeed, thanks to the fact that the Hardy-Littlewood maximal function ${\mathbf{M}}$ is bounded from Lebesgue space $L^1(\mathbb{R}^n)$ into Marcinkiewicz space $L^{1,\infty}(\mathbb{R}^n)$, one gets that
\begin{align}\label{est-3.1}
\mathcal{L}^n\left(\mathcal{V}_{\varepsilon}\right) \le \mathcal{L}^n\left(\left\{ {\mathbf{M}}(\mathcal{H}(x,\nabla u))>\varepsilon^{-\vartheta}\lambda \right\} \cap \Omega \right) \le \frac{C}{\varepsilon^{-\vartheta} \lambda}\int_{\Omega}{\mathcal{H}(x,\nabla u) dx}.
\end{align}
Using estimate~\eqref{eq:prop1} into \eqref{est-3.1}, we deduce that
\begin{align}\label{est-3.2}
\mathcal{L}^n\left(\mathcal{V}_{\varepsilon}\right) \le \frac{C}{\varepsilon^{-\vartheta} \lambda}\int_{\Omega}{\mathcal{H}(x,\mathbf{F}) dx}.
\end{align}
On the other hand, there exists $\xi_1 \in \Omega$ such that
${\mathbf{M}}(\mathcal{H}(x,\mathbf{F}))(\xi_1) \le \varepsilon^{\kappa}\lambda,$
which follows from \eqref{est-3.2} that
\begin{align*}
\mathcal{L}^n\left(\mathcal{V}_{\varepsilon}\right)  \le \frac{C}{\varepsilon^{-\vartheta} \lambda}\int_{\tilde{B}}{\mathcal{H}(x,\mathbf{F}) dx}  
\le \frac{C \mathcal{L}^n(\Omega)}{\varepsilon^{-\vartheta} \lambda}  {\mathbf{M}}(\mathcal{H}(x,\mathbf{F}))(\xi_1) \le C \varepsilon^{\vartheta+\kappa} \mathcal{L}^n(\Omega),
\end{align*}
where the ball $\tilde{B} : =  B_{D_0}(\xi_1) \supset \Omega$ with $D_0 = \mathrm{diam}(\Omega)$. We remark here the parameter $\kappa$ in this inequality will be determined later such that $\kappa \ge 1$. Therefore we can take $\varepsilon_0$ small enough such that the following estimate 
\begin{align}\nonumber
\mathcal{L}^n\left(\mathcal{V}_{\varepsilon}\right)  \le C \varepsilon^{\vartheta+\kappa} \left(\frac{D_0}{R}\right)^n  \mathcal{L}^n(B_R(\xi_0)) < \varepsilon  \mathcal{L}^n(B_R),
\end{align}
holds for all $\varepsilon \in (0,\varepsilon_0)$. By Lemma~\ref{lem:mainlem}, it remains to prove that for every $\xi \in \Omega$ and $\varrho \in (0,R]$, if $\mathcal{L}^n\left(\mathcal{V}_{\varepsilon} \cap B_{\varrho}(\xi)\right) \ge \varepsilon \mathcal{L}^n\left(B_{\varrho}(\xi)\right)$ then $B_{\varrho}(\xi) \cap \Omega \subset \mathcal{W}$. By contradiction, let us assume that we can find $\xi_2 \in B_{\varrho}(\xi)\cap \Omega \cap \mathcal{W}^c$ and $\xi_3 \in \mathcal{V}_{\varepsilon} \cap B_{\varrho}(\xi)$, this yields
\begin{align}\label{eq:x2}
{\mathbf{M}}(\mathcal{H}(x,\nabla u))(\xi_2) \le \lambda \ \mbox{ and } \
{\mathbf{M}}(\mathcal{H}(x,\mathbf{F}))(\xi_3) \le \varepsilon^{\kappa}\lambda.
\end{align}
The rest of this proof is to show that
\begin{align}\label{eq:iigoal}
\mathcal{L}^n\left(\mathcal{V}_{\varepsilon} \cap B_{\varrho}(\xi)\right) < \varepsilon\mathcal{L}^n\left(B_{\varrho}(\xi)\right).
\end{align}
For any $\zeta \in B_{\varrho}(\xi)$, it is easily to check that
\begin{align}\label{eq:res9}
{\mathbf{M}}(\mathcal{H}(x,\nabla u))(\zeta) \le \max \left\{ {\mathbf{M}}^{\varrho}(\bigchi_{B_{2\varrho}(\xi)} \mathcal{H}(x,\nabla u))(\zeta) ; \  \mathbf{T}^{\varrho} (\mathcal{H}(x,\nabla u))(\zeta) \right\}.
\end{align}
Moreover, since $B_{\varrho'}(\zeta) \subset B_{3\varrho'}(\xi_2)$ for all $\varrho' \ge \varrho$, it follows that
\begin{align}\label{eq:res10}
\mathbf{T}^{\varrho} (\mathcal{H}(x,\nabla u))(\zeta) \le 3^n \sup_{\varrho' > 0}{\fint_{B_{3\varrho'}(\xi_2)}{\mathcal{H}(x,\nabla u) dx}} \le 3^n {\mathbf{M}}(\mathcal{H}(x,\nabla u))(\xi_2).
\end{align}
Combining~\eqref{eq:res9} and~\eqref{eq:res10} with the fact~\eqref{eq:x2}, we get that
\begin{align*}
{\mathbf{M}}(\mathcal{H}(x,\nabla u))(\zeta) \le \max \left\{ {\mathbf{M}}(\bigchi_{B_{2\varrho}(\xi)} \mathcal{H}(x,\nabla u))(\zeta) ; \  3^n \lambda \right\}, \quad \forall \zeta \in B_{\varrho}(\xi).
\end{align*}
Let  $0<\varepsilon_0 < \min\left\{1, 3^{-\frac{n+1}{\vartheta}}\right\}$, then for all $\varepsilon \in (0,\varepsilon_0)$, there holds
\begin{align*}
\mathcal{V}_{\varepsilon} \cap B_{\varrho}(\xi) = \left\{{\mathbf{M}}(\bigchi_{B_{2\varrho}(\xi)} \mathcal{H}(x,\nabla u))> \varepsilon^{-\vartheta}\lambda; \ {\mathbf{M}}(\mathcal{H}(x,\mathbf{F})) \le \varepsilon^{\kappa}\lambda \right\} \cap B_{\varrho}(\xi) \cap \Omega,
\end{align*}
which deduces to
\begin{align}\label{eq:res11}
\mathcal{V}_{\varepsilon} \cap B_{\varrho}(\xi) \subset \left\{{\mathbf{M}}(\bigchi_{B_{2\varrho}(\xi)} \mathcal{H}(x,\nabla u))> \varepsilon^{-\vartheta}\lambda \right\} \cap B_{\varrho}(\xi) \cap \Omega.
\end{align} 

In order to obtain~\eqref{eq:iigoal}, we now consider two cases when $\xi$ belongs to the interior domain $B_{4\varrho}(\xi) \Subset\Omega$ and $\xi$ is close to the boundary $B_{4\varrho}(\xi) \cap \partial\Omega \neq \emptyset$. In the first case $B_{4\varrho}(\xi) \subset \Omega$, let $w$ be the unique solution to the following equation
\begin{equation}\nonumber 
\begin{cases} \mbox{div} \left( \mathcal{A}(x,\nabla w)\right) & = \ 0, \quad \ \quad \mbox{ in } B_{4\varrho}(\xi),\\ 
\hspace{1.2cm} w & = \ u, \qquad \mbox{ on } \partial B_{4\varrho}(\xi).\end{cases}
\end{equation}
Applying Lemma~\ref{lem:Rev} with $\gamma = \frac{1}{\vartheta}>1$, there exists a positive constant $C$ such that
\begin{align}\label{eq:revH-1}
\left(\fint_{B_{2\varrho}(\xi)}{[\mathcal{H}(x,\nabla w)]^{\frac{1}{\vartheta}}} dx\right)^{\vartheta} \le C \fint_{B_{4\varrho}(\xi)}{\mathcal{H}(x,\nabla w) dx} .
\end{align}
Moreover, a comparison estimate between $\nabla u$ and $\nabla w$ over $B_{4\varrho}(\xi)$ can be obtained by~\eqref{eq:cor} in Lemma~\ref{lem:comp_est_in} as follows
\begin{align} \nonumber
\fint_{B_{4\varrho}(\xi)}{\mathcal{H}(x,\nabla u - \nabla w)dx} & \le \varepsilon^{1-\vartheta} \fint_{B_{4\varrho}(\xi)}{\mathcal{H}(x,\nabla u) dx} \\ \nonumber 
& \qquad + C \varepsilon^{-(1-\vartheta)\max\left\{0,\frac{2-p}{p-1}\right\}}\fint_{B_{4\varrho}(\xi)}{\mathcal{H}(x,\mathbf{F})dx} \\ \label{eq:com-1}
& \le \varepsilon^{1-\vartheta} \fint_{B_{4\varrho}(\xi)}{\mathcal{H}(x,\nabla u) dx} + C \varepsilon^{1-\kappa}\fint_{B_{4\varrho}(\xi)}{\mathcal{H}(x,\mathbf{F})dx},
\end{align}
where $\kappa = 1 + \max\left\{0, \frac{(1-\vartheta)(2-p)}{p-1}\right\}$. With the following remark
\begin{align*}
{\mathbf{M}}(\bigchi_{B_{2\varrho}(\xi)} \mathcal{H}(x,\nabla u)) \le C \left[ {\mathbf{M}}(\bigchi_{B_{2\varrho}(\xi)} \mathcal{H}(x,\nabla u - \nabla w)) +  {\mathbf{M}}(\bigchi_{B_{2\varrho}(\xi)} \mathcal{H}(x,\nabla w))\right],
\end{align*}
one deduces from~\eqref{eq:res11} that
\begin{align}\nonumber
\mathcal{L}^n\left(\mathcal{V}_{\varepsilon} \cap B_{\varrho}(\xi)\right) &\le C  \mathcal{L}^n\left(\left\{{\mathbf{M}}(\bigchi_{B_{2\varrho}(\xi)} \mathcal{H}(x,\nabla u - \nabla w))> \varepsilon^{-\vartheta}\lambda \right\} \cap B_{\varrho}(\xi)\right) \\ \label{eq:estV-1}
& \qquad \qquad + C\mathcal{L}^n\left(\left\{{\mathbf{M}}(\bigchi_{B_{2\varrho}(\xi)} \mathcal{H}(x,\nabla w))> \varepsilon^{-\vartheta}\lambda \right\} \cap B_{\varrho}(\xi)\right). 
\end{align}
Applying again the boundedness of the Hardy-Littlewood maximal function ${\mathbf{M}}$ from Lebesgue space $L^s(\mathbb{R}^n)$ into Marcinkiewicz space $L^{s,\infty}(\mathbb{R}^n)$ for two terms on the right hand side of~\eqref{eq:estV-1} with $s = 1$ and $s = \frac{1}{\vartheta}>1$  respectively, we obtain that
\begin{align}\nonumber
\mathcal{L}^n\left(\mathcal{V}_{\varepsilon} \cap B_{\varrho}(\xi)\right) &\le \frac{C}{\varepsilon^{-\vartheta}\lambda} \int_{B_{2\varrho}(\xi)}\mathcal{H}(x,\nabla u - \nabla w) dx \\ \nonumber
& \qquad \qquad + \frac{C}{\left(\varepsilon^{-\vartheta}\lambda\right)^{\frac{1}{\vartheta}}} \int_{B_{2\varrho}(\xi)} \left[\mathcal{H}(x,\nabla w)\right]^{\frac{1}{\vartheta}} dx \\ \nonumber
&\le \frac{C \varrho^n}{\varepsilon^{-\vartheta}\lambda} \fint_{B_{4\varrho}(\xi)}\mathcal{H}(x,\nabla u - \nabla w) dx \\ \label{eq:estV-2}
& \qquad \qquad + \frac{C \varrho^n \varepsilon}{{\lambda}^{\frac{1}{\vartheta}}} \fint_{B_{2\varrho}(\xi)} \left[\mathcal{H}(x,\nabla w)\right]^{\frac{1}{\vartheta}} dx.
\end{align}
It is easy to check that $B_{4\varrho}(\xi) \subset B_{5\varrho}(\xi_2) \cap B_{5\varrho}(\xi_3)$  which follows from~\eqref{eq:x2} that
\begin{align}\label{eq:com-2}
\fint_{B_{4\varrho}(\xi)}{\mathcal{H}(x,\nabla u)dx} \le \left(\frac{5}{4}\right)^n \fint_{B_{5\varrho}(\xi_2)}{\mathcal{H}(x,\nabla u)dx} \le C {\mathbf{M}}(\mathcal{H}(x,\nabla u))(\xi_2) \le C \lambda.
\end{align}
Similarly, thanks to~\eqref{eq:com-1} there holds
\begin{align*}
\fint_{B_{4\varrho}(\xi)}{\mathcal{H}(x,\nabla u - \nabla w)dx} & \le C \varepsilon^{1-\vartheta} \fint_{B_{5\varrho}(\xi_2)}{\mathcal{H}(x,\nabla u) dx} + C \varepsilon^{1-\kappa}\fint_{B_{5\varrho}(\xi_3)}{\mathcal{H}(x,\mathbf{F})dx} \\  
& \le C\varepsilon^{1-\vartheta} {\mathbf{M}}(\mathcal{H}(x,\nabla u))(\xi_2) + C \varepsilon^{1-\kappa}{\mathbf{M}}(\mathcal{H}(x,\mathbf{F}))(\xi_3),
\end{align*}
which implies from~\eqref{eq:x2} that
\begin{align}\label{eq:com-3}
\fint_{B_{4\varrho}(\xi)}{\mathcal{H}(x,\nabla u - \nabla w)dx} & \le  C \left(\varepsilon^{1-\vartheta}+\varepsilon\right)\lambda \le C \varepsilon^{1-\vartheta} \lambda.
\end{align}
Thanks to \eqref{eq:revH-1}, one has
\begin{align}\nonumber
\fint_{B_{2\varrho}(\xi)} \left[\mathcal{H}(x,\nabla w)\right]^{\frac{1}{\vartheta}} dx & \le C \left(\fint_{B_{4\varrho}(\xi)} \mathcal{H}(x,\nabla w) dx\right)^{\frac{1}{\vartheta}} \\ \label{eq:estV-3}
& \le C \left(\fint_{B_{4\varrho}(\xi)} \mathcal{H}(x,\nabla u) + \mathcal{H}(x,\nabla u - \nabla w) dx\right)^{\frac{1}{\vartheta}}.
\end{align}
Adding~\eqref{eq:com-2} and~\eqref{eq:com-3} into~\eqref{eq:estV-3}, it gives
\begin{align*}
\fint_{B_{2\varrho}(\xi)} \left[\mathcal{H}(x,\nabla w)\right]^{\frac{1}{\vartheta}} dx \le C \left(1+\varepsilon^{1-\vartheta}\right)^{\frac{1}{\vartheta}} \lambda^{\frac{1}{\vartheta}} \le C \lambda^{\frac{1}{\vartheta}},
\end{align*}
which deduces~\eqref{eq:iigoal} from~\eqref{eq:com-3} and~\eqref{eq:estV-2}.\\

We finally concentrate to the second case when $\xi$ is close to the boundary of domain $\Omega$, that means $B_{4\varrho}(\xi) \cap \partial\Omega \neq \emptyset$. In this case, we select $\xi_4 \in \partial \Omega$ such that $|\xi_4 - \xi| = \mathrm{dist}(\xi,\partial \Omega) \le 4\varrho$. Let us denote $\tilde{\Omega} = B_{12\varrho}(\xi_4) \cap \Omega$ and consider $\tilde{w}$ as the unique solution to the following equation
\begin{equation}\nonumber 
\begin{cases} \mbox{div} \left( \mathcal{A}(x,\nabla \tilde{w})\right) & = \ 0, \quad \ \quad \mbox{ in } \tilde{\Omega},\\ 
\hspace{1.2cm} \tilde{w} & = \ u, \qquad \mbox{ on } \partial \tilde{\Omega}.\end{cases}
\end{equation}
It notices that $B_{2\varrho}(\xi) \subset B_{6\varrho}(\xi_4)$, \eqref{eq:res11} can be rewritten as
\begin{align*}
\mathcal{V}_{\varepsilon} \cap B_{\varrho}(\xi) \subset \left\{{\mathbf{M}}(\bigchi_{B_{6\varrho}(\xi_4)} \mathcal{H}(x,\nabla u))> \varepsilon^{-\vartheta}\lambda \right\} \cap B_{\varrho}(\xi) \cap \Omega,
\end{align*}
which yields that
\begin{align}\nonumber
\mathcal{L}^n\left(\mathcal{V}_{\varepsilon} \cap B_{\varrho}(\xi)\right) &\le C  \mathcal{L}^n\left(\left\{{\mathbf{M}}(\bigchi_{B_{6\varrho}(\xi_4)} \mathcal{H}(x,\nabla u - \nabla \tilde{w}))> \varepsilon^{-\vartheta}\lambda \right\} \cap B_{\varrho}(\xi)\right) \\ \nonumber
& \qquad \qquad + C\mathcal{L}^n\left(\left\{{\mathbf{M}}(\bigchi_{B_{6\varrho}(\xi_4)} \mathcal{H}(x,\nabla \tilde{w}))> \varepsilon^{-\vartheta}\lambda \right\} \cap B_{\varrho}(\xi)\right) \\ \nonumber
&\le \frac{C \varrho^n}{\varepsilon^{-\vartheta}\lambda} \fint_{B_{12\varrho}(\xi_4)}\mathcal{H}(x,\nabla u - \nabla \tilde{w}) dx \\ \label{eq:est-101}
& \qquad \qquad + \frac{C \varrho^n}{\left(\varepsilon^{-\vartheta}\lambda\right)^{\frac{1}{\vartheta}}} \fint_{B_{6\varrho}(\xi_4)} \left[\mathcal{H}(x,\nabla \tilde{w})\right]^{\frac{1}{\vartheta}} dx.
\end{align}
Thanks to Lemma~\ref{lem:Rev-boundary}, we obtain the reverse H{\"o}lder for the boundary case as follows
\begin{align*}
\left(\fint_{B_{6\varrho}(\xi_4)}{[\mathcal{H}(x,\nabla \tilde{w})]^{\frac{1}{\vartheta}}} dx\right)^{\vartheta} \le C \fint_{B_{12\varrho}(\xi_4)}{\mathcal{H}(x,\nabla \tilde{w}) dx} .
\end{align*}
Application of Lemma~\ref{lem:comp_boundary} enables us to get the following comparison estimate 
\begin{align} \nonumber
\fint_{B_{12\varrho}(\xi_4)}{\mathcal{H}(x,\nabla u - \nabla \tilde{w})dx}  \le \varepsilon^{1-\vartheta} \fint_{B_{12\varrho}(\xi_4)}{\mathcal{H}(x,\nabla u) dx} + C \varepsilon^{1-\kappa}\fint_{B_{12\varrho}(\xi_4)}{\mathcal{H}(x,\mathbf{F})dx}.
\end{align}
Using these inequalities, we can do similar as the proof in the previous case to show these following estimates
\begin{align*}
 \fint_{B_{12\varrho}(\xi_4)}{\mathcal{H}(x,\nabla u)dx} \le C {\mathbf{M}}(\mathcal{H}(x,\nabla u))(\xi_2) \le C \lambda,
\end{align*}
\begin{align*}
\fint_{B_{12\varrho}(\xi_4)}{\mathcal{H}(x,\nabla u - \nabla \tilde{w})dx} & \le  C \left(\varepsilon^{1-\vartheta} {\mathbf{M}}(\mathcal{H}(x,\nabla u))(\xi_2) +\varepsilon^{1-\kappa} {\mathbf{M}}(\mathcal{H}(x,\mathbf{F}))(\xi_3)\right) \\
& \le C \varepsilon^{1-\vartheta} \lambda,
\end{align*}
and
\begin{align*}
\fint_{B_{6\varrho}(\xi_4)} \left[\mathcal{H}(x,\nabla \tilde{w})\right]^{\frac{1}{\vartheta}} dx & \le C \left(\fint_{B_{12\varrho}(\xi_4)} \mathcal{H}(x,\nabla u) + \mathcal{H}(x,\nabla u - \nabla \tilde{w})dx \right)^{\frac{1}{\vartheta}} \\
& \le C \left(1+\varepsilon^{1-\vartheta}\right)^{\frac{1}{\vartheta}} \lambda^{\frac{1}{\vartheta}} \le C \lambda^{\frac{1}{\vartheta}},
\end{align*}
We may conclude that~\eqref{eq:iigoal} also holds in this case by taking into account these inequalities to~\eqref{eq:est-101}. The proof is complete.
\end{proof}

\begin{theorem}\label{theo:M_lambda-beta}
Let $\Omega$ be an open bounded domain in $\mathbb{R}^n$ such that $\partial \Omega$ is $C^{1,\alpha^+}$ domain for some $\alpha^+ \in [\alpha,1]$. Assume that $u \in W^{1,1}(\Omega)$ is a distributional solution to~\eqref{eq:main_double} with 
$\mathcal{H}(x,Du), \mathcal{H}(x,\mathbf{F}) \in L^1(\Omega)$,
under main assumptions given in~\eqref{eq:cond1}, \eqref{eq:cond2} and~\eqref{eq:cond3}. Then for any  $\beta \in [0, n)$ and $\vartheta \in \left(0,1-\frac{\beta}{n}\right)$,  one can find $\varepsilon_0 = \varepsilon_0(n,\beta,\vartheta) \in (0,1)$, $\kappa = \kappa(\beta,\vartheta) \ge 1$ and a constant $C = C(\texttt{data},\Omega,\beta,\vartheta)>0$ such that the following estimate
\begin{align}\nonumber
&\mathcal{L}^n\left(\{{\mathbf{M}}\mathbf{M}_{\beta}(\mathcal{H}(x,Du))>\varepsilon^{-\vartheta}\lambda, {\mathbf{M}_{\beta}}(\mathcal{H}(x,\mathbf{F})) \le \varepsilon^{\kappa}\lambda \}\cap \Omega \right)\\ \nonumber 
&~~~~~~\qquad \qquad \qquad \qquad \qquad \qquad \leq C \varepsilon \mathcal{L}^n\left(\{ {\mathbf{M}}\mathbf{M}_{\beta}(\mathcal{H}(x,Du))> \lambda\}\cap \Omega \right),
\end{align}
holds for any $\lambda>0$ and $\varepsilon \in (0,\varepsilon_0)$. 
\end{theorem}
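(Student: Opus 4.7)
The plan is to mirror the good-$\lambda$ architecture of Theorem~\ref{theo:M_lambda}, but with $\mathbf{M}$ replaced by $\mathbf{M}\mathbf{M}_\beta$, using Lemma~\ref{lem:MrMr} to decouple the extra outer $\mathbf{M}$ from $\mathbf{M}_\beta$. Define
\begin{align*}
\mathcal{V}_\varepsilon &:= \left\{\mathbf{M}\mathbf{M}_\beta(\mathcal{H}(x,\nabla u))>\varepsilon^{-\vartheta}\lambda,\ \mathbf{M}_\beta(\mathcal{H}(x,\mathbf{F}))\le\varepsilon^\kappa\lambda\right\}\cap\Omega,\\
\mathcal{W} &:= \left\{\mathbf{M}\mathbf{M}_\beta(\mathcal{H}(x,\nabla u))>\lambda\right\}\cap\Omega,
\end{align*}
fix $R\in(0,r_0/12)$, and aim at verifying the two hypotheses of Lemma~\ref{lem:mainlem}: the global smallness $\mathcal{L}^n(\mathcal{V}_\varepsilon)\le\varepsilon\mathcal{L}^n(B_R)$ and the density-implies-containment condition for every ball $B_\varrho(\xi)$ with $\varrho\in(0,R]$.

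For the global bound, I would apply the weak $(1,1)$ inequality for $\mathbf{M}$ to the function $\mathbf{M}_\beta(\mathcal{H}(x,\nabla u))$ and use Lemma~\ref{lem:global-M-beta} (the fractional counterpart of Proposition~\ref{prop1}) in place of Proposition~\ref{prop1}, to get $\mathcal{L}^n(\mathcal{V}_\varepsilon)\le C\varepsilon^\vartheta (\mathrm{diam}\,\Omega)^\beta/\lambda\cdot\int_\Omega\mathcal{H}(x,\mathbf{F})\,dx$. Since $\mathcal{V}_\varepsilon\neq\emptyset$ forces the existence of $\xi_1\in\Omega$ with $\mathbf{M}_\beta(\mathcal{H}(\cdot,\mathbf{F}))(\xi_1)\le\varepsilon^\kappa\lambda$, the last integral is bounded by $C(\mathrm{diam}\,\Omega)^{n-\beta}\varepsilon^\kappa\lambda$, yielding $\mathcal{L}^n(\mathcal{V}_\varepsilon)\le C\varepsilon^{\vartheta+\kappa}\mathcal{L}^n(\Omega)$; choosing $\kappa\ge1$ and $\varepsilon_0$ small finishes this step.

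The density condition is established by contradiction. Pick $\xi_2\in B_\varrho(\xi)\cap\mathcal{W}^c$ and $\xi_3\in\mathcal{V}_\varepsilon\cap B_\varrho(\xi)$, so that $\mathbf{M}\mathbf{M}_\beta(\mathcal{H}(\cdot,\nabla u))(\xi_2)\le\lambda$ and $\mathbf{M}_\beta(\mathcal{H}(\cdot,\mathbf{F}))(\xi_3)\le\varepsilon^\kappa\lambda$. Apply Lemma~\ref{lem:MrMr} at radius $\varrho$ to get, for every $\zeta\in B_\varrho(\xi)$,
$$\mathbf{M}\mathbf{M}_\beta(\mathcal{H}(\cdot,\nabla u))(\zeta)\le\max\bigl\{\mathbf{M}^{2\varrho}_\beta(\mathcal{H}(\cdot,\nabla u))(\zeta),\ \mathbf{M}^{\varrho}\mathbf{T}^{\varrho}_\beta(\mathcal{H}(\cdot,\nabla u))(\zeta),\ \mathbf{T}^{\varrho}\mathbf{M}_\beta(\mathcal{H}(\cdot,\nabla u))(\zeta)\bigr\}.$$
Using the inclusion $B_{\rho'}(\zeta)\subset B_{3\rho'}(\xi_2)$ valid for every $\rho'\ge\varrho$, the two large-scale terms on the right are pointwise bounded by constant multiples of $\mathbf{M}\mathbf{M}_\beta(\mathcal{H}(\cdot,\nabla u))(\xi_2)\le\lambda$, so choosing $\varepsilon_0$ with $\varepsilon_0^{-\vartheta}$ exceeding those constants rules them out and reduces matters to
$$\mathcal{V}_\varepsilon\cap B_\varrho(\xi)\subset\bigl\{\mathbf{M}^{2\varrho}_\beta\bigl(\chi_{B_{3\varrho}(\xi)}\mathcal{H}(x,\nabla u)\bigr)>\varepsilon^{-\vartheta}\lambda\bigr\}\cap B_\varrho(\xi);$$
the cutoff is valid because $\mathbf{M}^{2\varrho}_\beta$ only averages over balls of radius $<2\varrho$.

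For this final piece, I would exploit the crude pointwise bound $\mathbf{M}^{2\varrho}_\beta g\le(2\varrho)^\beta\mathbf{M} g$ to convert the fractional maximal into a classical one, and then repeat the interior/boundary dichotomy of Theorem~\ref{theo:M_lambda} almost verbatim: solve the reference problem \eqref{eq:I1} or \eqref{eq:I1-boundary}, split $\mathcal{H}(x,\nabla u)\le C[\mathcal{H}(x,\nabla u-\nabla w)+\mathcal{H}(x,\nabla w)]$, and estimate the two resulting level sets using the weak $L^1$ bound on $\mathbf{M}$ combined with Lemmas~\ref{lem:comp_est_in}--\ref{lem:comp_boundary} (with small parameter $\varepsilon^{1-\vartheta}$) for the first, and the weak $L^{1/\vartheta}$ bound on $\mathbf{M}$ combined with the reverse H\"older estimates of Lemmas~\ref{lem:Rev}--\ref{lem:Rev-boundary} at exponent $\gamma=1/\vartheta$ for the second. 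The ingredients $\fint_{B_{C\varrho}(\xi)}\mathcal{H}(x,\nabla u)\le C\lambda$ (from $\xi_2$) and $\fint_{B_{C\varrho}(\xi)}\mathcal{H}(x,\mathbf{F})\le C\varrho^{-\beta}\varepsilon^\kappa\lambda$ (from $\xi_3$, using the very definition of $\mathbf{M}_\beta$) are the fractional counterparts of the bounds in Theorem~\ref{theo:M_lambda}; the negative power $\varrho^{-\beta}$ is precisely cancelled by the $(2\varrho)^\beta$ from the domination, so the same choice $\kappa=1+\max\{0,(1-\vartheta)(2-p)/(p-1)\}$ (enlarged if necessary to ensure $\kappa\ge 1$) still produces $\mathcal{L}^n(\mathcal{V}_\varepsilon\cap B_\varrho(\xi))\le C\varepsilon\mathcal{L}^n(B_\varrho(\xi))$.

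The main obstacle is precisely this bookkeeping of $\varrho$-powers: one must verify that every negative power of $\varrho$ produced by converting the fractional information at $\xi_3$ into an $L^1$ average over a ball centered at $\xi$ is compensated by a matching positive power coming from $(2\varrho)^\beta$ or from the reverse H\"older step raised to the exponent $1/\vartheta$, so that the resulting measure bound has only $\varrho^\beta$ or $\varrho^{\beta/\vartheta}$ left, both of which are controlled by $R$. The hypothesis $\vartheta<1-\beta/n$ is what guarantees that the reverse H\"older exponent $1/\vartheta>1$ coexists with $\beta/\vartheta$ remaining in a range compatible with the inequalities above, and is the natural place where the fractional exponent $\beta$ enters the admissible range of $\vartheta$.
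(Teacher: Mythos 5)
Your proposal is correct and follows the same good-$\lambda$ skeleton as the paper: the smallness hypothesis of Lemma~\ref{lem:mainlem} via Lemma~\ref{lem:global-M-beta}, the reduction of $\mathbf{M}\mathbf{M}_\beta$ to the cut-off operator $\mathbf{M}^{2\varrho}_\beta$ via Lemma~\ref{lem:MrMr} and the points $\xi_2,\xi_3$, and then the interior/boundary dichotomy with the comparison and reverse H\"older lemmas. The one genuine deviation is in the final measure estimates. The paper keeps the fractional operator and applies the weak-type bound of Lemma~\ref{lem:bound-M-beta} directly to $\mathbf{M}^{2\varrho}_\beta$, with $s=1$ for the difference term and $s=\frac{1}{\vartheta+\beta/n}$ for the $\nabla w$ term (hence reverse H\"older at $\gamma=\frac{1}{\vartheta+\beta/n}$); this is precisely where the hypothesis $\vartheta<1-\frac{\beta}{n}$ is consumed, since it is what makes $s>1$. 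You instead dominate $\mathbf{M}^{2\varrho}_\beta g\le(2\varrho)^\beta\mathbf{M}g$ and fall back on the classical weak $L^1$ and weak $L^{1/\vartheta}$ bounds for $\mathbf{M}$, with reverse H\"older at $\gamma=1/\vartheta$. Your $\varrho$-power bookkeeping does close: the difference term gives $C(2\varrho)^\beta\varrho^{n-\beta}\varepsilon^{1-\vartheta}\lambda/(\varepsilon^{-\vartheta}\lambda)=C\varepsilon\varrho^n$, and the $\nabla w$ term gives $C\varrho^n\varepsilon\lambda^{-1/\vartheta}\bigl(\varrho^\beta\fint\mathcal{H}(x,\nabla w)\bigr)^{1/\vartheta}\le C\varepsilon\varrho^n$. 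Your route is slightly more elementary (it never invokes the weak-type bound for $\mathbf{M}_\beta$ with $s>1$) and in fact only requires $\vartheta\in(0,1)$, so your closing remark that $\vartheta<1-\frac{\beta}{n}$ is ``where $\beta$ enters'' describes the paper's argument rather than your own; in your version that hypothesis is simply inherited from the statement and not actually used in the local step.
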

\begin{proof}
Let $\beta \in [0,n)$ and $u$ be a solution to equations~\eqref{eq:main_double}. For simplicity of notations, we denote 
\begin{align*}
\mathbb{U}_{\beta}(x) = \mathbf{M}_{\beta}(\mathcal{H}(x,\nabla u)) \mbox{ and } \mathbb{F}_{\beta}(x) = \mathbf{M}_{\beta}(\mathcal{H}(x,\mathbf{F})).
\end{align*}
For $\vartheta \in \left(0,1-\frac{\beta}{n}\right)$, we need to prove that we can find $\varepsilon_0 = \varepsilon_0(n,\beta,\vartheta) \in (0,1)$, $\kappa = \kappa(n,\beta,\vartheta) \ge 1$ such that $\mathcal{L}^n \left(\mathcal{V}_{\varepsilon,\beta}\right) \le C \mathcal{L}^n \left(\mathcal{W}_{\beta}\right)$, for all $\lambda>0$ and $\varepsilon \in (0,\varepsilon_0)$, where two measurable sets $\mathcal{V}_{\varepsilon,\beta}$ and $\mathcal{W}_{\beta}$ are defined by
\begin{align} \label{def:VW-beta}
&\mathcal{V}_{\varepsilon,\beta} = \left\{{\mathbf{M}}(\mathbb{U}_{\beta})>\varepsilon^{-\vartheta}\lambda, \, \mathbb{F}_{\beta} \le \varepsilon^{\kappa}\lambda \right\}\cap \Omega \ \mbox{ and } \ \mathcal{W}_{\beta} = \left\{ {\mathbf{M}}(\mathbb{U}_{\beta})> \lambda \right\}\cap \Omega.
\end{align} 
It is similar to the proof of Theorem~\ref{theo:M_lambda}, we may assume that there exists $\xi_1\in \Omega$ such that $\mathbb{F}(\xi_1) \le \varepsilon^{\kappa} \lambda$. Thanks to the boundedness property of maximal function ${\mathbf{M}}$ and notation of $\mathcal{V}_{\varepsilon,\beta}$ in~\eqref{def:VW-beta}, there holds
\begin{align}\label{est-3.1-beta}
\mathcal{L}^n\left(\mathcal{V}_{\varepsilon,\beta}\right) \le \mathcal{L}^n\left(\left\{ {\mathbf{M}}(\mathbb{U}_{\beta})>\varepsilon^{-\vartheta}\lambda \right\} \cap \Omega \right) \le \frac{C}{\varepsilon^{-\vartheta} \lambda}\int_{\Omega}{\mathbb{U}_{\beta}(x) dx}.
\end{align}
Applying estimate~\eqref{eq:M-beta} in Lemma~\ref{lem:global-M-beta} to~\eqref{est-3.1-beta}, we deduce that
\begin{align}\label{est-3.2-beta}
\mathcal{L}^n\left(\mathcal{V}_{\varepsilon,\beta}\right) \le \frac{C D_0^{\beta}}{\varepsilon^{-\vartheta} \lambda}\int_{\Omega}{\mathcal{H}(x,\mathbf{F}) dx} \le \frac{C D_0^n}{\varepsilon^{-\vartheta} \lambda} {\mathbf{M}_{\beta}}(\mathcal{H}(x,\mathbf{F}))(\xi_1),
\end{align}
where $D_0 = \mathrm{diam}(\Omega)$. Let us fix $R \in (0,r_0/18)$. We note that ${\mathbf{M}}_{\beta}(\mathcal{H}(x,\mathbf{F}))(\xi_1) = \mathbb{F}_{\beta}(\xi_1) \le \varepsilon^{\kappa}\lambda$, which follows from~\eqref{est-3.2-beta} that
\begin{align*}
\mathcal{L}^n\left(\mathcal{V}_{\varepsilon,\beta}\right) \le C \varepsilon^{\vartheta+\kappa} \left({D_0}/{R}\right)^n  \mathcal{L}^n(B_R(\xi_0)).
\end{align*}
Since $\kappa \ge 1$, it yields that one can find $\varepsilon_0$ small enough such that $$\mathcal{L}^n\left(\mathcal{V}_{\varepsilon,\beta}\right)  \le \varepsilon  \mathcal{L}^n(B_R), \ \mbox{ for all } \ \varepsilon \in (0,\varepsilon_0).$$ 
As in the proof of Theorem~\ref{theo:M_lambda}, we need to show that for every $\xi \in \Omega$ and $\varrho \in (0,R]$, if $B_{\varrho}(\xi) \cap \Omega \not \subset \mathcal{W}_{\beta}$ then 
\begin{align}\label{eq:iigoal-beta}
\mathcal{L}^n\left(\mathcal{V}_{\varepsilon,\beta} \cap B_{\varrho}(\xi)\right) \le \varepsilon\mathcal{L}^n\left(B_{\varrho}(\xi)\right).
\end{align}
This hypothesis leads to the existence of $\xi_2 \in B_{\varrho}(\xi)\cap \Omega \cap (\mathcal{W}_{\beta})^c$. Moreover, we may assume there is at least $\xi_3 \in \mathcal{V}_{\varepsilon,\beta} \cap B_{\varrho}(\xi)$. Therefore, we have
\begin{align}\label{eq:x2-beta}
{\mathbf{M}}(\mathbb{U}_{\beta})(\xi_2) \le \lambda \ \mbox{ and } \
\mathbb{F}_{\beta}(\xi_3) \le \varepsilon^{\kappa}\lambda.
\end{align}
We now estimate the Lebesgue measure of $\mathcal{V}_{\varepsilon,\beta} \cap B_{\varrho}(\xi)$ via the cut-off fractional maximal function. To do this, we first note that for any $\zeta \in B_{\varrho}(\xi)$, since $B_{\varrho'}(\zeta) \subset B_{3\varrho'}(\xi_2)$ for all $\varrho' \ge \varrho$, one has
\begin{align}\label{eq:res10-beta}
\mathbf{T}^{\varrho} (\mathbb{U}_{\beta})(\zeta) \le 3^n \sup_{\varrho' > 0}{\fint_{B_{3\varrho'}(\xi_2)}{\mathbb{U}_{\beta}(x) dx}} \le 3^n {\mathbf{M}}(\mathbb{U}_{\beta})(\xi_2).
\end{align}
Moreover, for all $\varrho' \in (0,\varrho)$ and $\eta \in B_{\varrho'}(\zeta)$, since $B_{\varrho''}(\eta) \subset B_{\varrho''+3\varrho}(\xi_2)$ for all $\varrho'' \ge \varrho$, it follows that
\begin{align*}
\mathbf{T}_{\beta}^{\varrho}(\mathcal{H}(x,\nabla u))(\eta) & = \sup_{\varrho'' \ge \varrho} (\varrho'')^{\beta - n} \int_{B_{\varrho}(\eta)} \mathcal{H}(x,\nabla u) dx \\
& \le \sup_{\varrho'' \ge \varrho} \left( 1+ \frac{3\varrho}{\varrho''}\right)^{n - \beta} (3\varrho + \varrho'')^{\beta - n} \int_{B_{3\varrho + \varrho''}(\xi_2)} \mathcal{H}(x,\nabla u) dx \\
& \le 4^n \mathbb{U}(\xi_2),
\end{align*}
which yields that
\begin{align}\label{eq:res10b-beta}
\mathbf{M}^r \mathbf{T}_{\beta}^{\varrho}(\mathcal{H}(x,\nabla u))(\zeta) = \sup_{\varrho' \in (0,\varrho)} \fint_{B_{\varrho'}(\zeta)} \mathbf{T}_{\beta}^{\varrho}(\mathcal{H}(x,\nabla u))(\eta) d\eta \le 4^n \mathbf{M}(\mathbb{U})(\xi_2).
\end{align}
Thanks to Lemma~\ref{lem:MrMr}, combining~\eqref{eq:res10-beta} and~\eqref{eq:res10b-beta} with the fact~\eqref{eq:x2-beta}, we get that
\begin{align*}
{\mathbf{M}}(\mathbb{U}_{\beta})(\zeta) 
\le \max \left\{  \mathbf{M}^{2\varrho}_{\beta}(\mathcal{H}(x,\nabla u))(\zeta) ; \  4^n \lambda \right\}, \quad \forall \zeta \in B_{\varrho}(\xi),
\end{align*}
which guarantees that
\begin{align}\label{eq:res11-beta}
\mathcal{L}^n \left(\mathcal{V}_{\varepsilon,\beta} \cap B_{\varrho}(\xi)\right) \le \mathcal{L}^n \left( \left\{{\mathbf{M}}^{2\varrho}_{\beta}(\mathcal{H}(x,\nabla u))> \varepsilon^{-\vartheta}\lambda \right\} \cap B_{\varrho}(\xi) \cap \Omega \right),
\end{align}
for all  $\varepsilon \in (0,\varepsilon_0)$, where $\varepsilon_0^{-\vartheta}>4^n$.\\

To get~\eqref{eq:iigoal-beta}, we now consider two cases when $\xi$ belongs to the interior domain $B_{6\varrho}(\xi) \Subset\Omega$ and $\xi$ is close to the boundary $B_{6\varrho}(\xi) \cap \partial\Omega \neq \emptyset$. In the first case $B_{6\varrho}(\xi) \subset \Omega$, let $w$ be the unique solution to the following equation
\begin{equation}\nonumber 
\begin{cases} \mbox{div} \left( \mathcal{A}(x,\nabla w)\right) & = \ 0, \quad \ \quad \mbox{ in } B_{6\varrho}(\xi),\\ 
\hspace{1.2cm} w & = \ u, \qquad \mbox{ on } \partial B_{6\varrho}(\xi).\end{cases}
\end{equation}
Using the fact that for every $\zeta \in B_{\varrho}(\xi)$, one has $B_{\varrho'}(\zeta) \subset B_{2\varrho}(\zeta)\subset B_{3\varrho}(\xi)$ for all $0 < \varrho'<2\varrho$, we may decompose the cut-off fractional maximal function as follows
\begin{align*}
{\mathbf{M}}^{2\varrho}_{\beta}(\mathcal{H}(x,\nabla u))(\zeta) & = \sup_{0<\varrho'<2\varrho} (\varrho')^{\beta - n} \int_{B_{\varrho'}(\zeta)} \mathcal{H}(x,\nabla u) dx \\ 
& = \sup_{0<\varrho'<2\varrho} (\varrho')^{\beta - n} \int_{B_{\varrho'}(\zeta)} \bigchi_{B_{3\varrho}(\xi)}\mathcal{H}(x,\nabla u) dx \\
& = {\mathbf{M}}^{2\varrho}_{\beta}(\bigchi_{B_{3\varrho}(\xi)} \mathcal{H}(x,\nabla u))(\zeta) \\ & \le C \left[ {\mathbf{M}}^{2\varrho}_{\beta}(\bigchi_{B_{3\varrho}(\xi)} \mathcal{H}(x,\nabla u - \nabla w))(\zeta) +  {\mathbf{M}}^{2\varrho}_{\beta}(\bigchi_{B_{3\varrho}(\xi)} \mathcal{H}(x,\nabla w))(\zeta)\right].
\end{align*}
With this decomposition, one deduces from~\eqref{eq:res11-beta} that
\begin{align}\nonumber
\mathcal{L}^n\left(\mathcal{V}_{\varepsilon,\beta} \cap B_{\varrho}(\xi)\right) &\le C  \mathcal{L}^n\left(\left\{{\mathbf{M}}^{2\varrho}_{\beta}(\bigchi_{B_{3\varrho}(\xi)} \mathcal{H}(x,\nabla u - \nabla w))> \varepsilon^{-\vartheta}\lambda \right\} \cap B_{\varrho}(\xi)\right) \\ \nonumber
& \qquad  + C\mathcal{L}^n\left(\left\{{\mathbf{M}}^{2\varrho}_{\beta}(\bigchi_{B_{3\varrho}(\xi)} \mathcal{H}(x,\nabla w))> \varepsilon^{-\vartheta}\lambda \right\} \cap B_{\varrho}(\xi)\right) \\ \label{eq:estV-1-beta}
& =: \mathrm{I} + \mathrm{II}.
\end{align}
To estimate two terms $\mathrm{I}$ and $\mathrm{II}$ on the right hand side of~\ref{eq:estV-1-beta}, our main idea is using the boundedness of the fractional maximal function ${\mathbf{M}}_{\beta}$ in Lemma~\ref{lem:bound-M-beta} with different values of $s$. The first term $\mathrm{I}$ can be estimated by applying Lemma~\ref{lem:bound-M-beta} with $s = 1$ as follows
\begin{align}\nonumber
\mathrm{I} &\le \frac{C}{\left(\varepsilon^{-\vartheta}\lambda\right)^{\frac{n}{n-\beta}}} \left(\int_{B_{3\varrho}(\xi)}\mathcal{H}(x,\nabla u - \nabla w) dx \right)^{\frac{n}{n-\beta}} \\ \label{eq:estV-2-beta}
&\le \frac{C (6\varrho)^n}{\left(\varepsilon^{-\vartheta}\lambda\right)^{\frac{n}{n-\beta}}} \left((6\varrho)^{\beta} \fint_{B_{6\varrho}(\xi)}\mathcal{H}(x,\nabla u - \nabla w) dx \right)^{\frac{n}{n-\beta}}. 
\end{align}
Applying~\eqref{eq:cor} in Lemma~\ref{lem:comp_est_in}, there holds
\begin{align} \label{eq:com-1-beta}
\fint_{B_{6\varrho}(\xi)}{\mathcal{H}(x,\nabla u - \nabla w)dx} & \le \varepsilon^{1-\vartheta} \fint_{B_{6\varrho}(\xi)}{\mathcal{H}(x,\nabla u) dx} + C \varepsilon^{1-\kappa}\fint_{B_{6\varrho}(\xi)}{\mathcal{H}(x,\mathbf{F})dx},
\end{align}
where $\kappa = 1 + \max\left\{0, \frac{(1-\vartheta)(2-p)}{p-1}\right\}$. We can easily check that 
$$B_{6\varrho}(\xi) \subset B_{7\varrho}(\xi_2) \cap B_{7\varrho}(\xi_3),$$  
which follows from~\eqref{eq:x2-beta} that
\begin{align}\nonumber
(6\varrho)^{\beta}\fint_{B_{6\varrho}(\xi)}{\mathcal{H}(x,\nabla u)dx} & \le \left(\frac{7}{6}\right)^{n - \beta} (7\varrho)^{\beta}\fint_{B_{7\varrho}(\xi_2)}{\mathcal{H}(x,\nabla u)dx} \\ \label{eq:com-2-beta}
& \le \left(\frac{7}{6}\right)^{n - \beta} {\mathbf{M}}(\mathbb{U}_{\beta})(\xi_2) \le \left(\frac{7}{6}\right)^{n - \beta} \lambda,
\end{align}
and 
\begin{align}\label{eq:com-2-beta-F}
(6\varrho)^{\beta}\fint_{B_{6\varrho}(\xi)}{\mathcal{H}(x,\mathbf{F})dx} \le \left(\frac{7}{6}\right)^{n - \beta} \mathbb{F}_{\beta}(\xi_3) \le \left(\frac{7}{6}\right)^{n - \beta} \varepsilon^{\kappa} \lambda.
\end{align}
Combining between~\eqref{eq:com-1-beta}, \eqref{eq:com-2-beta} and~\eqref{eq:com-2-beta-F}, one gets that
\begin{align}\label{eq:com-2b-beta}
(6\varrho)^{\beta} \fint_{B_{6\varrho}(\xi)}{\mathcal{H}(x,\nabla u - \nabla w)dx}  
 & \le C \left( \varepsilon^{1-\vartheta} + \varepsilon\right)\lambda \le C \varepsilon^{1-\vartheta}\lambda,
\end{align}
which follows from~\eqref{eq:estV-2-beta} that
\begin{align}\label{eq:com-3-beta}
\mathrm{I} & \le \frac{C (6\varrho)^n}{\left(\varepsilon^{-\vartheta}\lambda\right)^{\frac{n}{n-\beta}}} \left( \varepsilon^{1-\vartheta}\lambda \right)^{\frac{n}{n-\beta}}  \le {C \varrho^n} \varepsilon^{\frac{n}{n-\beta}} \le C \varepsilon \varrho^n.
\end{align}
In order to estimate the second term $\mathrm{II}$, we apply Lemma~\ref{lem:bound-M-beta} with $s = \frac{1}{\vartheta + \frac{\beta}{n}}>1$, it gives
\begin{align}\nonumber
\mathrm{II} &\le  \frac{C}{\left(\varepsilon^{-\vartheta}\lambda\right)^{\frac{1}{\vartheta + \frac{\beta}{n}}.\frac{n}{n -\frac{\beta}{\vartheta + \frac{\beta}{n}}}}} \left(\int_{B_{3\varrho}(\xi)} \left[\mathcal{H}(x,\nabla w)\right]^{\frac{1}{\vartheta + \frac{\beta}{n}}} dx\right)^{\frac{n}{n -\frac{\beta}{\vartheta + \frac{\beta}{n}}}}.
\end{align}
By directly calculating, it is easy to check that
\begin{align*}
\frac{1}{\vartheta + \frac{\beta}{n}}.\frac{n}{n -\frac{\beta}{\vartheta + \frac{\beta}{n}}} = \frac{1}{\vartheta} \ \mbox{ and } \ \frac{n}{n -\frac{\beta}{\vartheta + \frac{\beta}{n}}} = \frac{1}{\vartheta}\left(\vartheta + \frac{\beta}{n}\right).
\end{align*}
We obtain that
\begin{align}\nonumber
\mathrm{II} &\le  \frac{C}{\left(\varepsilon^{-\vartheta}\lambda\right)^{\frac{1}{\vartheta}}} \left(\int_{B_{3\varrho}(\xi)} \left[\mathcal{H}(x,\nabla w)\right]^{\frac{1}{\vartheta + \frac{\beta}{n}}} dx\right)^{\frac{1}{\vartheta}\left(\vartheta + \frac{\beta}{n}\right)}\\ \nonumber
& =  \frac{C}{\left(\varepsilon^{-\vartheta}\lambda\right)^{\frac{1}{\vartheta}}} \left(\mathcal{L}^n(B_{3\varrho}(x))\fint_{B_{3\varrho}(\xi)} \left[\mathcal{H}(x,\nabla w)\right]^{\frac{1}{\vartheta + \frac{\beta}{n}}} dx\right)^{\frac{1}{\vartheta}\left(\vartheta + \frac{\beta}{n}\right)}\\ \label{eq:estV-5-beta}
& \le   \frac{C (6\varrho)^n}{\left(\varepsilon^{-\vartheta}\lambda\right)^{\frac{1}{\vartheta}}} \left[ (6\varrho)^{\frac{\beta}{\vartheta}} \left(\fint_{B_{3\varrho}(\xi)} \left[\mathcal{H}(x,\nabla w)\right]^{\frac{1}{\vartheta + \frac{\beta}{n}}} dx\right)^{\frac{1}{\vartheta}\left(\vartheta + \frac{\beta}{n}\right)}\right]. 
\end{align}
We now apply Lemma~\ref{lem:Rev} with $\gamma = \frac{1}{\vartheta + \frac{\beta}{n}}>1$, there exists a positive constant $C$ such that
\begin{align}\label{eq:estV-3-beta}
(6\varrho)^{\frac{\beta}{\vartheta}} \left(\fint_{B_{3\varrho}(\xi)} \left[\mathcal{H}(x,\nabla w)\right]^{\frac{1}{\vartheta + \frac{\beta}{n}}} dx\right)^{\frac{1}{\vartheta}\left(\vartheta + \frac{\beta}{n}\right)} & \le C \left((6\varrho)^{\beta}\fint_{B_{6\varrho}(\xi)} \mathcal{H}(x,\nabla w) dx\right)^{\frac{1}{\vartheta}}.
\end{align}
Moreover, thanks to~\eqref{eq:com-2-beta} and~\eqref{eq:com-2b-beta} again, one has
\begin{align}\nonumber
(6\varrho)^{\beta}\fint_{B_{6\varrho}(\xi)} \mathcal{H}(x,\nabla w) dx & \le C  (6\varrho)^{\beta}\fint_{B_{6\varrho}(\xi)} \mathcal{H}(x,\nabla u)dx \\ \nonumber
& \qquad + C (6\varrho)^{\beta}\fint_{B_{6\varrho}(\xi)} \mathcal{H}(x,\nabla u - \nabla w) dx \\ \label{eq:estV-6-beta}
& \le C \left(1 + \varepsilon^{1-\vartheta}\right)\lambda \le C \lambda.
\end{align}
Adding~\eqref{eq:estV-3-beta} and~\eqref{eq:estV-6-beta} into~\eqref{eq:estV-5-beta}, there holds
\begin{align}\label{eq:estV-4-beta}
\mathrm{II} \le \frac{C (6\varrho)^n}{\left(\varepsilon^{-\vartheta}\lambda\right)^{\frac{1}{\vartheta}}} \lambda^{\frac{1}{\vartheta}} \le C \varepsilon \varrho^n.
\end{align}
We may conclude~\eqref{eq:iigoal-beta} from~\eqref{eq:estV-1-beta}, \eqref{eq:com-3-beta} and~\eqref{eq:estV-4-beta}.\\

Finally we prove~\eqref{eq:iigoal-beta} for the second case $B_{6\varrho}(\xi) \cap \partial\Omega \neq \emptyset$. We can take $\xi_4 \in \partial \Omega$ such that $|\xi_4 - \xi| = \mathrm{dist}(\xi,\partial \Omega) \le 6\varrho$. Let us denote $\tilde{\Omega} = B_{18\varrho}(\xi_4) \cap \Omega$ and consider $\tilde{w}$ as the unique solution to the following equation
\begin{equation}\nonumber 
\begin{cases} \mbox{div} \left( \mathcal{A}(x,\nabla \tilde{w})\right) & = \ 0, \quad \ \quad \mbox{ in } \tilde{\Omega},\\ 
\hspace{1.2cm} \tilde{w} & = \ u, \qquad \mbox{ on } \partial \tilde{\Omega}.\end{cases}
\end{equation}
In this case, we remark that for every $\zeta \in B_{\varrho}(\xi)$, one has $B_{\varrho'}(\zeta) \subset B_{2\varrho}(\zeta)\subset B_{9\varrho}(\xi_4)$ for all $0 < \varrho'<2\varrho$, therefore
\begin{align*}
{\mathbf{M}}^{2\varrho}_{\beta}(\mathcal{H}(x,\nabla u))(\zeta) & = \sup_{0<\varrho'<2\varrho} (\varrho')^{\beta - n} \int_{B_{\varrho'}(\zeta)} \mathcal{H}(x,\nabla u) dx \\ 
& = \sup_{0<\varrho'<2\varrho} (\varrho')^{\beta - n} \int_{B_{\varrho'}(\zeta)} \bigchi_{B_{9\varrho}(\xi_4)}\mathcal{H}(x,\nabla u) dx \\
& = {\mathbf{M}}^{2\varrho}_{\beta}(\bigchi_{B_{9\varrho}(\xi_4)} \mathcal{H}(x,\nabla u))(\zeta) \\ & \le C \left[ {\mathbf{M}}^{2\varrho}_{\beta}(\bigchi_{B_{9\varrho}(\xi_4)} \mathcal{H}(x,\nabla u - \nabla \tilde{w}))(\zeta) +  {\mathbf{M}}^{2\varrho}_{\beta}(\bigchi_{B_{9\varrho}(\xi_4)} \mathcal{H}(x,\nabla \tilde{w}))(\zeta)\right].
\end{align*}
With this decomposition, one deduces from~\eqref{eq:res11-beta} that
\begin{align}\nonumber
\mathcal{L}^n\left(\mathcal{V}_{\varepsilon,\beta} \cap B_{\varrho}(\xi)\right) &\le C  \mathcal{L}^n\left(\left\{{\mathbf{M}}^{2\varrho}_{\beta}(\bigchi_{B_{9\varrho}(\xi_4)} \mathcal{H}(x,\nabla u - \nabla \tilde{w}))> \varepsilon^{-\vartheta}\lambda \right\} \cap B_{\varrho}(\xi)\right) \\ \label{eq:estV-1-b-beta}
& \qquad  + C\mathcal{L}^n\left(\left\{{\mathbf{M}}^{2\varrho}_{\beta}(\bigchi_{B_{9\varrho}(\xi_4)} \mathcal{H}(x,\nabla \tilde{w}))> \varepsilon^{-\vartheta}\lambda \right\} \cap B_{\varrho}(\xi)\right).
\end{align}
Applying Lemma~\ref{lem:bound-M-beta} with $s = 1$ and $s = \frac{1}{\vartheta + \frac{\beta}{n}}$ for two terms on the right hand side of~\eqref{eq:estV-1-b-beta} respectively, there holds
\begin{align}\nonumber
\mathcal{L}^n\left(\mathcal{V}_{\varepsilon,\beta} \cap B_{\varrho}(\xi)\right)
&\le \frac{C}{\left(\varepsilon^{-\vartheta}\lambda\right)^{\frac{n}{n-\beta}}} \left(\int_{B_{9\varrho}(\xi_4)}\mathcal{H}(x,\nabla u - \nabla \tilde{w}) dx \right)^{\frac{n}{n-\beta}}\\ \nonumber
& \qquad +  \frac{C}{\left(\varepsilon^{-\vartheta}\lambda\right)^{\frac{1}{\vartheta}}} \left[\left(\int_{B_{9\varrho}(\xi_4)} \left[\mathcal{H}(x,\nabla \tilde{w})\right]^{\frac{1}{\vartheta + \frac{\beta}{n}}} dx\right)^{\frac{1}{\vartheta}\left(\vartheta + \frac{\beta}{n}\right)}\right]\\ \nonumber
 &\le \frac{C (18\varrho)^n}{\left(\varepsilon^{-\vartheta}\lambda\right)^{\frac{n}{n-\beta}}} \left((18\varrho)^{\beta} \fint_{B_{18\varrho}(\xi_4)}\mathcal{H}(x,\nabla u - \nabla \tilde{w}) dx \right)^{\frac{n}{n-\beta}}\\ \label{eq:estV-5-b-beta}
& \qquad +  \frac{C (18\varrho)^n}{\left(\varepsilon^{-\vartheta}\lambda\right)^{\frac{1}{\vartheta}}} \left[ (18\varrho)^{\frac{\beta}{\vartheta}} \left(\fint_{B_{9\varrho}(\xi_4)} \left[\mathcal{H}(x,\nabla \tilde{w})\right]^{\frac{1}{\vartheta + \frac{\beta}{n}}} dx\right)^{\frac{1}{\vartheta}\left(\vartheta + \frac{\beta}{n}\right)}\right]. 
\end{align}
Thanks to Lemma~\ref{lem:Rev-boundary} and Lemma~\ref{lem:comp_boundary}, we also obtain the reverse H{\"o}lder and the comparison estimate as follows
\begin{align*}
\left(\fint_{B_{9\varrho}(\xi_4)}{[\mathcal{H}(x,\nabla \tilde{w})]^{\frac{1}{\vartheta + \frac{\beta}{n}}}} dx\right)^{\vartheta + \frac{\beta}{n}} \le C \fint_{B_{18\varrho}(\xi_4)}{\mathcal{H}(x,\nabla \tilde{w}) dx},
\end{align*}
and 
\begin{align} \nonumber
\fint_{B_{18\varrho}(\xi_4)}{\mathcal{H}(x,\nabla u - \nabla \tilde{w})dx}  \le \varepsilon^{1-\vartheta} \fint_{B_{18\varrho}(\xi_4)}{\mathcal{H}(x,\nabla u) dx} + C \varepsilon^{1-\kappa}\fint_{B_{18\varrho}(\xi_4)}{\mathcal{H}(x,\mathbf{F})dx}.
\end{align}
With the reverse H{\"o}lder's inequality and the above comparison estimate, we obtain from~\eqref{eq:estV-5-b-beta} that
\begin{align*}
\mathcal{L}^n\left(\mathcal{V}_{\varepsilon,\beta} \cap B_{\varrho}(\xi)\right)
 &\le \frac{C (18\varrho)^n}{\left(\varepsilon^{-\vartheta}\lambda\right)^{\frac{n}{n-\beta}}} \left((18\varrho)^{\beta} \fint_{B_{18\varrho}(\xi_4)}\mathcal{H}(x,\nabla u - \nabla \tilde{w}) dx \right)^{\frac{n}{n-\beta}}\\ 
& \qquad +  \frac{C (18\varrho)^n}{\left(\varepsilon^{-\vartheta}\lambda\right)^{\frac{1}{\vartheta}}} \left[\left((18\varrho)^{\beta}\fint_{B_{18\varrho}(\xi_4)} \mathcal{H}(x,\nabla \tilde{w}) dx\right)^{\frac{1}{\vartheta}}\right]. 
\end{align*}
The inequality~\eqref{eq:iigoal-beta} can be established by the similar technique in the comparisons with $\mathbf{M}(\mathbb{U}_{\beta})(\xi_2)$ and $\mathbb{F}_{\beta}(\xi_3)$ as the proof of the previous case. It finishes the proof.
\end{proof}

\subsection{Proofs of main theorems}
\label{sec:main_proofs}
\begin{proof}[Proof of Theorem~\ref{theo:regularityM}]
For $0<t<\infty$ and $0<s<\infty$, let us take $\vartheta \in \left(0,\min\left\{1,\frac{1}{s}\right\}\right)$. Thanks to Theorem~\ref{theo:M_lambda}, there exists $\varepsilon_0>0$ such that the following inequality
\begin{align}\nonumber
&\mathcal{L}^n\left(\{{\mathbf{M}}(\mathcal{H}(x,\nabla u))>\varepsilon^{-\vartheta}\lambda, {\mathbf{M}}(\mathcal{H}(x,\mathbf{F})) \le \varepsilon^{\kappa}\lambda \}\cap \Omega \right)\\ \label{eq:51} 
&~~~~~~\qquad \qquad \qquad \qquad \qquad \qquad \leq C \varepsilon \mathcal{L}^n\left(\{ {\mathbf{M}}(\mathcal{H}(x,\nabla u))> \lambda\}\cap \Omega \right),
\end{align}
holds for any $\lambda>0$ and $\varepsilon \in (0,\varepsilon_0)$. By changing of variables and taking into account~\eqref{eq:51}, it follows that
\begin{align*}
\|{\mathbf{M}}(\mathcal{H}(x,\nabla u))\|^t_{L^{s,t}(\Omega)} & = \varepsilon^{-\vartheta t}s\int_0^\infty{\lambda^t\mathcal{L}^n(\{{\mathbf{M}}(\mathcal{H}(x,\nabla u))>\varepsilon^{-\vartheta}\lambda\} )^{\frac{t}{s}}\frac{d\lambda}{\lambda}}\\
&\le C\varepsilon^{-\vartheta t+\frac{t}{s}}s\int_0^\infty{\lambda^t\mathcal{L}^n\left(\{{\mathbf{M}}(\mathcal{H}(x,\nabla u))>\lambda\}\cap\Omega \right)^{\frac{t}{s}}\frac{d\lambda}{\lambda}}\\
&~~~+ C\varepsilon^{-\vartheta t}s\int_0^\infty{\lambda^t\mathcal{L}^n\left(\{{\mathbf{M}}(\mathcal{H}(x,\mathbf{F}))>\varepsilon^\kappa\lambda\}\cap\Omega \right)^{\frac{t}{s}}\frac{d\lambda}{\lambda}} \\
&\le C\varepsilon^{t\left(\frac{1}{s}-\vartheta\right)}\|{\mathbf{M}}\left(\mathcal{H}(x,\nabla u) \right)\|^t_{L^{s,t}(\Omega)} +C\varepsilon^{-t(\vartheta + \kappa)}\|{\mathbf{M}}(\mathcal{H}(x,\mathbf{F}))\|^t_{L^{s,t}(\Omega)}.
\end{align*}
We may choose $\varepsilon\in (0,\varepsilon_0)$ sufficiently small such that $C\varepsilon^{t\left(\frac{1}{s}-\vartheta\right)} \le \frac{1}{2}$, which completes the proof. The same conclusion can be drawn for the case $t = \infty$.
\end{proof}

\begin{proof}[Proof of Theorem~\ref{theo:main-M-beta}]
Let us set $\beta \in [0,n)$, $t \in (0,\infty)$ and $s \in (0,\infty)$. The same result can be done by the similar method for the case $t = \infty$. We can find $\vartheta \in \left(0,\min\left\{1-\frac{\beta}{n},\frac{1}{s}\right\}\right)$. Thanks to Theorem~\ref{theo:main-M-beta}, there exists $\varepsilon_0>0$ such that the following inequality
\begin{align}\nonumber
&\mathcal{L}^n\left(\{{\mathbf{M}}\mathbf{M}_{\beta}(\mathcal{H}(x,\nabla u))>\varepsilon^{-\vartheta}\lambda, {\mathbf{M}_{\beta}}(\mathcal{H}(x,\mathbf{F})) \le \varepsilon^{\kappa}\lambda \}\cap \Omega \right)\\ \label{eq:51-beta} 
&~~~~~~\qquad \qquad \qquad \qquad \qquad \qquad \leq C \varepsilon \mathcal{L}^n\left(\{ {\mathbf{M}}\mathbf{M}_{\beta}(\mathcal{H}(x,\nabla u))> \lambda\}\cap \Omega \right),
\end{align}
holds for any $\lambda>0$ and $\varepsilon \in (0,\varepsilon_0)$. By changing of variables and taking into account~\eqref{eq:51-beta}, it follows that
\begin{align*}
\|\mathbf{M}{\mathbf{M}}_{\beta}(\mathcal{H}(x,\nabla u))\|^t_{L^{s,t}(\Omega)} & = \varepsilon^{-\vartheta t}s\int_0^\infty{\lambda^t\mathcal{L}^n(\{\mathbf{M}{\mathbf{M}}_{\beta}(\mathcal{H}(x,\nabla u))>\varepsilon^{-\vartheta}\lambda\} )^{\frac{t}{s}}\frac{d\lambda}{\lambda}}\\
&\le C\varepsilon^{-\vartheta t+\frac{t}{s}}s\int_0^\infty{\lambda^t\mathcal{L}^n\left(\{\mathbf{M}{\mathbf{M}}_{\beta}(\mathcal{H}(x,\nabla u))>\lambda\}\cap\Omega \right)^{\frac{t}{s}}\frac{d\lambda}{\lambda}}\\
&~~~+ C\varepsilon^{-\vartheta t}s\int_0^\infty{\lambda^t\mathcal{L}^n\left(\{\mathbf{M}{\mathbf{M}}_{\beta}(\mathcal{H}(x,\mathbf{F}))>\varepsilon^\kappa\lambda\}\cap\Omega \right)^{\frac{t}{s}}\frac{d\lambda}{\lambda}} \\
&\le C\varepsilon^{t\left(\frac{1}{s}-\vartheta\right)}\|\mathbf{M}{\mathbf{M}}_{\beta}\left(\mathcal{H}(x,\nabla u) \right)\|^t_{L^{s,t}(\Omega)} \\
& \qquad \qquad + C\varepsilon^{-t(\vartheta + \kappa)}\|\mathbf{M}{\mathbf{M}}_{\beta}(\mathcal{H}(x,\mathbf{F}))\|^t_{L^{s,t}(\Omega)}.
\end{align*}
Since $t\left(\frac{1}{s}-\vartheta\right)>0$, so we can choose $\varepsilon\in (0,\varepsilon_0)$ satisfying $C\varepsilon^{t\left(\frac{1}{s}-\vartheta\right)} \le \frac{1}{2}$ to obtain that
\begin{align*}
\|\mathbf{M}{\mathbf{M}}_{\beta}(\mathcal{H}(x,\nabla u))\|_{L^{s,t}(\Omega)} \le C\|\mathbf{M}{\mathbf{M}}_{\beta}(\mathcal{H}(x,\mathbf{F}))\|_{L^{s,t}(\Omega)},
\end{align*}
which completes the proof by the boundedness of maximal function. 
\end{proof}


\end{document}